\documentclass[11pt]{article}
\usepackage{authblk}
\linespread{1.25}
\usepackage[top=1in,bottom=1in,left=1in,right=1in]{geometry}
\usepackage{amssymb}
\usepackage{longtable}
\usepackage{amsmath}
\usepackage{amsthm}
\usepackage[normalem]{ulem}
\usepackage{mathtools}
\usepackage{xcolor}
\usepackage[page]{appendix}
\usepackage{enumitem}
\usepackage{hyperref}
\usepackage{bm}
\usepackage{caption}
\usepackage{footmisc}
\usepackage{comment}
\usepackage{natbib}

\newcommand{\bbR}{\mathbb{R}}

\renewcommand{\P}{\mathbb{P}}

\newcommand{\U}{\mathcal{U}}

\newcommand{\op}{\mathrm{op}}

\newcommand{\bA}{\bm{A}}
\newcommand{\bB}{\bm{B}}
\newcommand{\bC}{\bm{C}}
\newcommand{\bM}{\bm{M}}
\newcommand{\bQ}{\bm{Q}}
\newcommand{\bE}{\bm{E}}

\newcommand{\bQhat}{\widehat{\bm{Q}}}
\newcommand{\bu}{\bm{u}}
\newcommand{\bv}{\bm{v}}
\newcommand{\ba}{\bm{a}}
\newcommand{\bb}{\bm{b}}

\newcommand{\bg}{\bm{g}}
\newcommand{\opnorm}[1]{\|#1\|_{\mathrm{op}}}

\newcommand{\btheta}{\bm{\theta}}
\newcommand{\bI}{\bm{I}}
\newcommand{\bbW}{\mathbb{W}}
\newcommand{\cR}{\mathcal{R}}
\newcommand{\cB}{\mathcal{B}}
\newcommand{\cX}{\mathcal{X}}
\newcommand{\Var}{\operatorname{Var}}
\newcommand{\bbE}{\mathbb{E}}
\newcommand{\bbP}{\mathbb{P}}
\newcommand{\bX}{\bm{X}}

\newcommand{\bT}{\bm{T}}
\newcommand{\bS}{\bm{S}}
\newcommand{\bD}{\bm{D}}
\newcommand{\bP}{\bm{P}}

\newcommand{\bbI}{\mathbb{I}}
\newcommand{\indicator}[1]{{\operatorname{I} \left \{ #1\right \}}}

\newcommand{\bbN}{\mathbb{N}}
\newcommand{\bbA}{\mathbb{A}}

\newcommand{\bbQ}{\mathbb{Q}}

\newcommand{\constantBandeira}{C_{\text{Pr.\ref{proposition: matrix concentration}}}}
\newcommand{\KL}[2]{\operatorname{KL}\left (#1\Vert #2 \right )}

\newcommand{\regret}{\operatorname{Regret}}
\newcommand{\ttW}{\mathtt{W}}
\newcommand{\ttd}{\mathtt{d}}
\newcommand{\cS}{\mathcal{S}}
\newcommand{\ttg}{\mathtt{g}}
\newcommand{\avgMatrix}{\overline{\bm{E}}}
\newcommand{\avgLessT}{\overline{\bm{E}}^{\le\tau}}
\newcommand{\avgLarge}{\overline{\bm{E}}^{> \tau}}
\newcommand{\ts}{\mathtt{s}}
\newcommand{\ttb}{\mathtt{b}}
\newcommand{\ttD}{\mathtt{D}}
\newcommand{\bj}{\mathbf{j}}

\DeclareMathOperator{\rank}{rank}
\DeclareMathOperator*{\argmax}{argmax}
\DeclareMathOperator*{\argmin}{argmin}

\theoremstyle{plain}
\newtheorem{theorem}{Theorem}
\newtheorem{lemma}{Lemma}
\newtheorem{corollary}{Corollary}
\newtheorem{prop}{Proposition}

\theoremstyle{definition}
\newtheorem{definition}{Definition}

\theoremstyle{remark}
\newtheorem{remark}{Remark}


\providecommand{\keywords}[1]
{
  \textbf{\textit{Keywords:}} #1
}

\title{Low-Rank Graphon Estimation: Theory and Applications to Graphon Games}
\author{Olga Klopp \\\vskip -0.4 cm ESSEC Business School\\
Fedor Noskov \\ HSE University 
        }
\date{}
\begin{document}
\maketitle
\begin{abstract}
We study low-rank estimation of an unknown sparse graphon from sampled network data under operator-norm loss, motivated by targeted interventions in graphon games. Starting from the observed adjacency matrix, we construct low-rank surrogates by singular value thresholding and, for smooth graphons, by block averaging followed by thresholding. We obtain non-asymptotic bounds on both the operator-norm error and the rank of the resulting estimator for stochastic block model, Hölder, and analytic graphons, and we complement these results with minimax lower bounds showing that the rates are essentially sharp for these classes. Our analysis highlights that low rank is valuable here primarily for computation: while it does not improve the minimax operator-norm rate, it yields operator-norm accurate surrogates with substantially smaller rank. We then apply these estimators to linear-quadratic graphon games and derive non-asymptotic stability bounds showing that the welfare loss incurred by using an estimated graphon is controlled by the operator-norm perturbation. This yields near-optimal guarantees for targeted interventions computed from the estimated graphon, together with substantial computational savings. For zero baseline heterogeneity and under a spectral-gap condition, we also establish matching lower bounds for intervention regret. Numerical experiments illustrate the trade-off between statistical accuracy, retained rank, and runtime.

\end{abstract}
\keywords
{Graphon Model, Network Games, Rank Estimation, Targeted Interventions}\\


\section{Introduction}


Large networks are ubiquitous in today's world, present in various systems such as social networks, epidemic models, power grids, the Internet of Things, and more. These networked systems possess a unique characteristic: the agents within them are interconnected via the network. Consequently, the efficiency of algorithms operating on these systems scales with the size of the network, denoted as $n$.
For instance, consider linear-quadratic network games, where computing a Nash equilibrium of the game exhibits a worst-case complexity of $O(n^3)$ \citep{galeotti_interventions}. In practical scenarios,  $n$  can reach substantial magnitudes; for instance, Facebook boasts over 2 billion users. Consequently, for computationally intensive tasks, such algorithms can become practically intractable.
To address this challenge, the theory of graphons has emerged as a valuable tool. For example, in the context of network games, graphons have been employed to mitigate computational complexities \citep{Parise_Ozdaglar_Econometrica}.

Formally, a graphon is a measurable function $W$ given by $W : [0,1]^2 \rightarrow [0,1]$. A graphon can either be interpreted as a limit of a sequence of finite networks, or as a probabilistic model that generates random finite networks \citep{lovasz}. While, traditionally, networked systems   have been modeled on finite networks, these models have been extended to graphons, e.g., \citep{Parise_Ozdaglar_Econometrica} extended network games to graphon games. These works have shown that modeling networked systems as graphon systems has three benefits: (i) graphons provide a mathematically elegant, unified framework for representing and comparing networked systems of various sizes, (ii) a graphon system can closely approximate the behavior of a networked system if the network has been sampled from the graphon, (iii) under certain conditions on the graphon, the dimension of some computational problems involving the networked system can be significantly reduced, thus leading to computationally efficient methods. 

{Crucially, the works on graphon games mentioned above assumed that the true graphon that generates
finite networks is known to the planner. In practice, however, the underlying graphon is unknown,
and the planner only observes one (or a few) realized networks. This motivates the problem of
estimating a graphon from sampled network data, a topic extensively studied in the literature \citep{chatterjee2012,gao2014rate,klopp2017,klopp2019,xu2017}.

In connection to our motivating application, targeted interventions that maximize social welfare at a Nash
equilibrium, the estimator must be accurate in a metric that controls the intervention loss.
Our non-asymptotic stability result (Theorem~\ref{theorem: graphon perturbed welfare problem}) shows that the achieved welfare is Lipschitz (with explicit constants) in the operator norm distance
$\|W_1-W_2\|_{\mathrm{op}}$. This suggests that, for the downstream objective \eqref{eq: graphon social welfare problem}, an estimator should satisfy:
\textbf{(P1)} it is close to the underlying graphon in operator norm.

At first glance, one might then simply take the empirical graphon associated with the observed adjacency matrix
(or equivalently use the adjacency matrix itself), since in operator norm the adjacency matrix already achieves
the minimax-optimal scaling over standard low-rank/SBM-type families. In fact, our Theorem~\ref{theorem: UHTE performance} recovers this
benchmark: setting the SVT threshold $\lambda=0$ yields $\widehat{\mathbf Q}_{0}=\mathbf A$ and an operator-norm
error of order $\sqrt{n\rho_n}$ (up to logarithmic factors). Therefore, throughout this paper low-rankness
should \emph{not} be interpreted as providing a better minimax \emph{statistical} rate in operator norm.

Instead, the role of low-rank structure is primarily \emph{computational} and is dictated by the intervention
problem \eqref{eq: graphon social welfare problem}. While $\mathbf A$ is statistically optimal in operator norm, the corresponding empirical graphon
typically has rank of order $n$, making the computation of interventions prohibitively expensive at scale.
In contrast, existing results in graphon games show that if the graphon has rank $r$, then the intervention
problem can be reduced to a semidefinite program with only two variables and an LMI constraint whose matrix
dimension scales as $r+2$ \citep{Parise_Ozdaglar_Econometrica}. This motivates the second desired property:
\textbf{(P2)} the estimator should have low (effective) rank, because the rank is the algorithmic bottleneck
in solving \eqref{eq: graphon social welfare problem}.

To achieve (P1)--(P2) simultaneously, we use a singular value thresholding (SVT) estimator. Given a network of
size $n$ sampled from an unknown sparse graphon $\rho_n W$, we form a low-rank approximation of the adjacency
matrix by retaining only those singular components whose singular values exceed a threshold $\lambda$, yielding
$\widehat{\mathbf Q}_\lambda$. We then define the estimated graphon as the empirical (piecewise-constant)
graphon associated with $\widehat{\mathbf Q}_\lambda$.

A key feature is that $\lambda$ acts as a \emph{computational knob}. Theorem~\ref{theorem: UHTE performance} bounds the operator-norm error by
an additive $\lambda$ term plus the intrinsic sampling noise (of order $\sqrt{n\rho_n}$), so taking $\lambda$ at
(or slightly above) the noise level preserves the optimal statistical scaling in operator norm while potentially
shrinking the retained rank
\[
R(\lambda)\;:=\;\mathrm{rank}(\widehat{\mathbf Q}_\lambda)\;=\;\#\{i:\sigma_i(\mathbf A)\ge \lambda\}.
\]
This rank directly governs both (i) the cost of computing $\widehat{\mathbf Q}_\lambda$ via truncated SVD methods
(up to rank $R(\lambda)$) and (ii) the dimension of the reduced SDP used to compute interventions. We then show
that for structured families (SBM, H\"older, analytic graphons) one can obtain explicit bounds on the rank of the
SVT estimator, which grow slowly with $n$, thereby yielding operator-norm accurate surrogates that make the
intervention computation tractable.
}

The rest of the paper is organized as follows. In Section \ref{section: preliminaries}, we introduce some preliminaries and our key estimation problem. In Section \ref{section: motivating applications}, we provide examples of  applications to graphon games. 
 In Section \ref{section: estimation}, we introduce our estimator, and derive bounds on (i) the operator norm of the difference between the estimated graphon and the true graphon (ii) the rank of the estimated graphon. In Section \ref{section:simulations}, we apply our results on graphon estimation to the problem of targeted interventions in network games and provide numerical experiments to illustrate our findings. The proofs of all the results are given in the appendix.

\subsubsection*{Notation}
Let $[n] := \{1, \dots, n\}$. Given two numbers $a, b$, denote their maximum (minimum) by $a \vee b$ ($a \wedge b$).
We use capital letters such as $W$ to denote a graphon. Let $I$ be the indicator function. We use bold letters to denote vectors and matrices, e.g., $\ba$ and $\bA$ respectively. Let $\bI$ be the identity matrix. Let $\bA_{ij}$ denote the $ij$th element of matrix $\bA$, and $[\bA_{ij}]$ denote the matrix whose elements are $\bA_{ij}$. Let $\bA \circ \bB$ denote the Hadamard or the elementwise product of matrices $\bA$ and $\bB$. 
Let $\|\ba\|_2$ denote the Euclidean $2$-norm of vector $\ba$, and $\|\bA\|_{\op}$ the induced $2$-norm, or the operator norm of matrix $\bA$. 
Small letters such as $f$ denote a function. Let $L_2[0,1]$ be the Hilbert space of square-integrable functions on $[0,1]$ equipped with the inner-product
\begin{align*}
    \langle f_1, f_2 \rangle_{L_2} = \int_0^1 f_1(x) f_2(x) dx,
\end{align*}
The associated norm is denoted by $\Vert f \Vert_{L_2} := \sqrt{\langle f, f \rangle_{L_2}}$.
We use $\U_1, \dots, \U_n$ to denote the uniform partition of $[0,1]$. To distinguish between a graphon $W$ and the associated linear operator $\bbW: L_2([0, 1]) \to L_2([0,1])$, we use blackboard bold font for the latter.


\section{Preliminaries}
\label{section: preliminaries}

We start with a brief recap of graphon theory and  its connection with network theory. We refer the interested reader to \cite{lovasz} for more details.


\subsubsection*{Graphons as operators}
Throughout this paper the term ``graphon'' denotes a symmetric measurable function  $W : [0,1]^2 \rightarrow [0,1]$. 
Let $\mathcal{W}$ denote the space of all bounded symmetric measurable functions $W : [0,1]^2 \rightarrow \mathbb{R}$. The elements of $\mathcal{W}$ are sometimes called ``kernels''. 

In the following, it will be useful to  think of graphons as operators on $L_2[0,1]$. Specifically,
we will denote by $\bbW$  the integral operator associated with graphon $W$, that is, 
$[\bbW f](x)= \int_0^1 W(x, y) f(y) dy$ for all $x \in [0,1]$, for all $f \in L_2[0,1]$,
with the associated operator norm 
\begin{align*}
	\opnorm{\bbW} = \sup_{f \in L_2[0,1], \; \Vert f \Vert_{L_2} = 1} \Vert \bbW f \Vert_{L_2}. 
\end{align*}

\subsubsection*{Graphons as random network models}

In relation to graphs, graphons can be seen both as the limit of a sequence of graphs when the number of nodes tends to infinity, as well as a way to generate random networks. Specifically, given a graphon $W$, one can sample a network or an adjacency matrix $\bA \in \{0,1\}^{n \times n}$ from $W$  of any desired size $n$ using the following procedure \citep{lovasz}:
\begin{enumerate}
\item Sample $n$ points  $\xi_1, \dots, \xi_n$  uniformly at random from $[0,1]$. 
\item Fix a sparsity parameter $\rho_n \in [0,1]$.\footnote{Note that the expected number of neighbors of a node in the graph defined by $\bA$ is of the order $\rho_n n$, and hence the expected number of edges in the graph is $\rho_n n^2$. Hence, the density of the graph is of the order $\rho_n n^2/n^2 = \rho_n$. Thus, the sparsity parameter characterizes the density of the graph defined by $\bA$.}
\item Sample a weight matrix $\bQ \in [0,1]^{n \times n}$ from   $W$ such that
\begin{align}
\label{eq: Q sampling}
    \bQ_{ij} = \rho_n W(\xi_{(i)},\xi_{(j)}) \quad \forall (i,j) \in [n]^2, 
\end{align}
where $\xi_{(i)}$ is an $i$-th ordered statistics of $\xi_1, \ldots, \xi_n$.
\item Sample an adjacency matrix $\bA \in \{0,1\}^{n \times n}$ from $\bQ$ such that
\begin{equation}
\label{eq: A sampling}
\begin{aligned}
    \bA_{ij} &= \begin{cases}
        1, & \textrm{w.p. } \bQ_{ij}, \\
        0, & \textrm{w.p. } 1 - \bQ_{ij},
    \end{cases} \quad \forall (i,j) \in [n]^2, \ i \neq j, \\
    \bm{A}_{ii} &= 0 \quad \forall i \in [n]. 
\end{aligned}
\end{equation}
\end{enumerate}

The more common graphon model assumes that $Q_{ij} = \rho_n W(\xi_i, \xi_j)$ instead of~\eqref{eq: Q sampling}. However, it leads to identifiability issues. Ensuring that each player's identity remains distinguishable is crucial in the context of graphon games, for example, for targeted interventions which motivates our choice of sampling model ~\eqref{eq: Q sampling}.
 By allowing the arguments to represent intrinsic characteristics of the agents - such as age or wealth, - we avoid such issues. Note that the results of Sections \ref{section: estimation} can be easily extended to the model given by $Q_{ij} = \rho_n W(\xi_i, \xi_j)$.

\subsubsection*{Empirical graphons}
Let $\bT \in \bbR^{n \times n}$ be an arbitrary matrix  and $\indicator{\cdot}$ be the indicator function on $[0,1]$. We define the empirical graphon or simply the graphon associated with $\bT$ as 
\begin{align*}
	W_{\bT} = \sum_{1 \le i, j \le n} \bT_{ij} \cdot \indicator{x \in \U_i} \cdot \indicator{y \in \U_j},
\end{align*}
where $\U_1, \dots, \U_n$ denote the uniform partition of $[0,1]$. 
The following lemma connects the operator norms of $W_{\bT}$ and $\bT$. The proof of the result is given in Appendix \ref{appendix: proof of empirical graphon lemma}. 
\begin{lemma}
\label{lemma: empirical graphon norm}
    Let $W_{\bT}$ be the graphon defined above and $\bbW_{\bT}$ be the associated linear operator. Then
    \begin{align*}
        \Vert \bbW_{\bT} \Vert_{\op} = \frac{1}{n} \Vert \bT \Vert_{\op}. 
    \end{align*}
\end{lemma}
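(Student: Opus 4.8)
The plan is to reduce the computation of $\opnorm{\bbW_{\bT}}$ to a finite-dimensional linear-algebra problem, exploiting the piecewise-constant structure of $W_{\bT}$. Let $V_n \subset L_2[0,1]$ be the $n$-dimensional subspace of functions that are constant on each cell $\U_i$ of the uniform partition, and define the linear map $\Phi : \bbR^n \to V_n$ by $\Phi \ba = \sum_{i=1}^n a_i \indicator{x \in \U_i}$. Since each cell has length $1/n$, one computes $\Vert \Phi \ba \Vert_{L_2} = n^{-1/2}\Vert \ba \Vert_2$, so $\Phi$ is a bijection rescaling norms by the constant factor $n^{-1/2}$.

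First I would record two elementary facts. (i) For $f \in L_2[0,1]$ and $x \in \U_i$ we have $[\bbW_{\bT} f](x) = \sum_{j=1}^n \bT_{ij}\int_{\U_j} f(y)\,dy$, which depends on $x$ only through $i$; hence $\bbW_{\bT}$ maps $L_2[0,1]$ into $V_n$. (ii) The quantity $\int_{\U_j} f(y)\,dy$ is unchanged if $f$ is replaced by its orthogonal projection $Pf$ onto $V_n$ (both equal $n^{-1}$ times the average of $f$ over $\U_j$), so $\bbW_{\bT} f = \bbW_{\bT}(Pf)$ for every $f$. Combining (ii) with $\Vert f \Vert_{L_2}^2 = \Vert Pf \Vert_{L_2}^2 + \Vert f - Pf \Vert_{L_2}^2 \ge \Vert Pf \Vert_{L_2}^2$ shows that the supremum defining $\opnorm{\bbW_{\bT}}$ may be restricted to $V_n$, that is, $\opnorm{\bbW_{\bT}} = \sup_{0 \neq g \in V_n} \Vert \bbW_{\bT} g \Vert_{L_2}/\Vert g \Vert_{L_2}$.

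Next I would write $\bbW_{\bT}$ on $V_n$ in coordinates. For $g = \Phi \ba$ one has $\int_{\U_j} g = a_j/n$, so (i) gives $[\bbW_{\bT}g](x) = n^{-1}(\bT\ba)_i$ for $x \in \U_i$, i.e. $\bbW_{\bT}\Phi\ba = \Phi(n^{-1}\bT\ba)$. Applying the rescaling property of $\Phi$ twice,
\begin{align*}
\frac{\Vert \bbW_{\bT}\Phi\ba \Vert_{L_2}}{\Vert \Phi\ba \Vert_{L_2}} = \frac{n^{-1/2}\Vert n^{-1}\bT\ba \Vert_2}{n^{-1/2}\Vert \ba \Vert_2} = \frac{1}{n}\cdot\frac{\Vert \bT\ba \Vert_2}{\Vert \ba \Vert_2},
\end{align*}
and taking the supremum over $\ba \neq 0$, together with the reduction to $V_n$, yields $\opnorm{\bbW_{\bT}} = \tfrac{1}{n}\opnorm{\bT}$.

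The computation is essentially bookkeeping; the only step requiring a little care — and thus the main obstacle — is the reduction, namely checking that $\bbW_{\bT}$ maps into $V_n$ and annihilates $V_n^\perp$, so that nothing is lost by restricting the supremum to the finite-dimensional subspace. The remaining ingredients are the isometry-up-to-scaling property of $\Phi$ and the intertwining identity $\bbW_{\bT}\Phi = \Phi\circ(n^{-1}\bT)$.
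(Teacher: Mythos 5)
Your proof is correct, and it is organized differently from the paper's. The paper works with the bilinear form $\langle g, \bbW_{\bT}[f]\rangle = \bv_g^T \bT \bv_f$, where $\bv_f, \bv_g$ collect the cell integrals of $f$ and $g$; Cauchy--Schwarz on each cell gives $\Vert \bv_f \Vert_2 \le n^{-1/2}$, hence the upper bound $\opnorm{\bbW_{\bT}} \le \tfrac{1}{n}\opnorm{\bT}$, and the reverse inequality is then obtained separately by exhibiting maximizers, namely the piecewise-constant functions built from the top singular vectors of $\bT$. You instead observe that $\bbW_{\bT}$ maps into the space $V_n$ of piecewise-constant functions and annihilates $V_n^\perp$ (since $\bbW_{\bT}f = \bbW_{\bT}(Pf)$ and $\Vert Pf\Vert_{L_2}\le\Vert f\Vert_{L_2}$), and that on $V_n$ it is conjugate, via the norm-rescaling bijection $\Phi$, to $n^{-1}\bT$; this yields the exact identity $\opnorm{\bbW_{\bT}} = \tfrac{1}{n}\opnorm{\bT}$ in a single step, with no separate attainment argument. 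The underlying computation (cell averages pick up a factor $n^{-1/2}$ each) is the same, but your intertwining formulation is slightly cleaner in that equality comes for free, while the paper's two-sided argument is more elementary in that it never needs to discuss the projection $P$ or the invariant-subspace structure. Your steps (i) and (ii) and the identity $\bbW_{\bT}\Phi = \Phi\circ(n^{-1}\bT)$ are all easy to verify, so there is no gap.
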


\subsubsection*{Estimation problem}
In this paper we are interested in the  problem of estimating the graphon $W$ from the observation of a  sample network $\bm{A}$ generated from it. While several works addressed this problem before (see, e.g., \citep{chatterjee2012,gao2014rate,klopp2017,klopp2019, xu2017}), we are  interested in estimation procedures that satisfy the following two key properties:
\begin{itemize}
    \item [(P1)] \label{P1} the estimator should be close to the underlying graphon in operator norm, and
    \item [(P2)] \label{P2} the estimator should have low rank. 
\end{itemize}
Before presenting our main results, we provide some motivation of why these two properties are important for applications.

\section{Targeted Interventions in Network Games}
\label{section: motivating applications}
We provide an example of application that motivate our study, Targeted Interventions in Network Games. 
Network games serve as models to elucidate strategic behaviors of agents within networked environments, finding applications across various domains such as public goods \newline \citep{BRAMOULLE_public_goods}, R\&D, crime \citep{bramoulle_interactions}, and others \citep{galeotti_jackson_network_games,francesca_review}. Recently, \citet{galeotti_interventions} addressed the challenge of a central planner intervening in a network game to enhance agents' welfare, a task presenting challenging optimization hurdles, particularly for large networks.
In response, \cite{Parise_Ozdaglar_Econometrica} introduced the concept of graphon games, formalizing the asymptotic behavior of a network game as the population size approaches infinity. They proposed a method to derive interventions for large network games from solutions in the graphon domain, assuming knowledge of the graphon. Our aim is to extend these findings to scenarios where the graphon is unknown, yet the planner can observe a realized network. We start by providing an overview of graphon games and the associated optimization problem. 


\subsection{Graphon games and interventions}

In a graphon game, the interval $[0,1]$ is the set of all players. The players interact over a graphon $W$. We will consider linear-quadratic (LQ) graphon games. For a general framework, we refer the reader to \cite{Parise_Ozdaglar_Econometrica}. In an LQ graphon game, each player $x \in[0,1]$ chooses a strategy $s(x) \in \bbR$. A strategy profile is a measurable function $s: x \mapsto s(x)$ in $L_{2}[0,1]$. For a given strategy profile $s$, the utility of player $x$ is given by 
\begin{align}\label{def:utility}
    U\left(s(x), z_{W}(x \mid s), \theta(x)\right) = -\frac{1}{2} s(x)^2 + s(x) \cdot \left(\gamma z_W(x \mid s) + \theta(x)\right). 
\end{align}
Here, $\gamma > 0$ is the peer-effect parameter, $z_{W}(x \mid s)$ is the local aggregate sensed by player $x$ via the graphon, defined as
\begin{align}
    z_{W}(x \mid s):=\int_{0}^{1} W(x, y) s(y) \mathrm{d} y, 
\end{align}
and $\theta(x)$ is a parameter modeling heterogeneity among the players. 
We denote a graphon game compactly as $G(W, \theta)$. 
A strategy profile $s^{*}$ of a graphon game is a Nash equilibrium if almost all players have no incentive to unilaterally deviate from their strategy, i.e.,
\begin{align}
    U\left(s^{*}(x), z_{W}\left(x \mid s^{*}\right), \theta(x)\right) \geq \sup _{\tilde{s} \in \bbR} U\left(\tilde{s}, z_{W}\left(x \mid s^{*}\right), \theta(x)\right) \quad \text { a.e. in }[0,1]. 
\end{align}
We  refer to a Nash equilibrium of a graphon game as a graphon equilibrium. Let $\mathrm{NE}(G(W, \theta))$ be the set of all graphon equilibria of $G(W, \theta)$. 

{
For the utility in \eqref{def:utility}, for any fixed local aggregate $z\in\mathbb{R}$ the payoff
$U(s(x),z,\theta(x))=-\frac12 s(x)^2 + s(x)\,(\gamma z+\theta(x))$ is strictly concave in $s(x)$, hence each player
has a unique best response. Writing the local aggregate as $z_W(x\mid s)=(\mathbb{W}s)(x)$, the best-response
mapping takes the linear form
\[
\mathrm{BR}(s)(x)=\gamma(\mathbb{W}s)(x)+\theta(x).
\]
Therefore, any graphon equilibrium $s^\star$ must satisfy the fixed-point equation
\[
s^\star=\gamma\mathbb{W}s^\star+\theta,
\qquad\text{equivalently}\qquad
(I-\gamma\mathbb{W})s^\star=\theta.
\]
Under the standard stability condition $\gamma\|\mathbb{W}\|_{\mathrm{op}}<1$ (assumed throughout our intervention
analysis and in Theorem~\ref{theorem: graphon perturbed welfare problem}), the operator $(I-\gamma\mathbb{W})$ is invertible on $L^2[0,1]$
(e.g., by a Neumann series argument), and the equilibrium exists and is unique, with the closed form
\[
s^\star=(I-\gamma\mathbb{W})^{-1}\theta.
\]
We refer to \cite{Parise_Ozdaglar_Econometrica} for a broader discussion of equilibrium in graphon games and for the
LQ equilibrium characterization used in our setting.
}


Next consider a central planner who wants to maximize the average utility of the players  at a graphon equilibrium. For this, the planner adds an intervention $\hat{\theta}(x)$ to the heterogeneity $\theta(x)$ of each player $x$, subject to a budget constraint. Thus, the central planner wants to solve the problem
\begin{align}
    \label{eq: graphon social welfare problem}
    \max _{\hat{\theta} \in L_2[0,1]} &\quad T_{(W,\theta)}(\hat{\theta}):= \int_0^1 U\left(s(x), z_W(x \mid s), \theta(x) + \hat{\theta}(x) \right) dx \\
    \text{s.t.} &\quad s \in \mathrm{NE}\left(G(W,\theta+\hat{\theta})\right), \quad \|\hat{\theta}\|_{L_2}^{2} \leq B. \nonumber
\end{align}
Here, $\|\hat{\theta}\|_{L_2}^{2} \leq B$ is the budget constraint, where $B>0$. 

Solving problem \eqref{eq: graphon social welfare problem} in practice presents two main challenges: i) the planner typically doesn't know the graphon  and ii) problem \eqref{eq: graphon social welfare problem} is, in general, computationally intractable. 
To obviate problem i), we  assume that the planner can observe one network realized from the graphon model and use this information to infer the underlying graphon. The planner could then solve problem \eqref{eq: graphon social welfare problem} for the estimated graphon, instead of the true graphon. The next theorem shows that it is possible to bound the suboptimality of such a solution for the original graphon intervention problem in terms of the graphon estimation error in the operator norm.

\begin{theorem}
\label{theorem: graphon perturbed welfare problem}
	Let $\bbW_1, \bbW_2: L_2[0, 1] \to L_2[0, 1]$ be two different integral operators such that \\$\Vert \bbW_1 \Vert_{\op}, \Vert \bbW_2 \Vert_{\op} < 1 / \gamma$, and let $W_1, W_2$ be their kernels. Assume that  $W_1, W_2$ are symmetric and
	\begin{align*}
		\int_{[0, 1]^2} W_1^2(x, y) dx dy, \int_{[0, 1]^2} W_2^2(x, y) dx dy < + \infty.
	\end{align*}	
	Let $T_1, T_2$ be corresponding target functions of problem~\eqref{eq: graphon social welfare problem} with the same heterogeneity $\theta(\cdot)$ and $\hat \theta_1, \hat \theta_2$ be the respective arguments of the maxima. Then
	\begin{align*}
		T_1(\hat{\theta}_2) \ge T_1(\hat{\theta}_1) - \frac{4 \gamma \Vert \bbW_1 - \bbW_2 \Vert_{\op} (\Vert \theta \Vert_{L_2}^2 + B)}{(1 - \gamma [\Vert \bbW_1 \Vert_{\op} \vee \Vert \bbW_2 \Vert_{\op}])^3}.
	\end{align*}
\end{theorem}
\begin{remark}[Why the operator norm naturally appears for interventions]
{Appendix~\ref{proof_graphon_perturbed_welfare_problem} shows that the welfare objective can be written as a quadratic form of the squared resolvent:
$T(W,\theta)(\hat\theta)=\frac12\langle (I-\gamma W)^{-2}(\theta+\hat\theta),\theta+\hat\theta\rangle$.
In particular, when $\theta\equiv 0$ the optimal value satisfies
\[
\max_{\|\hat\theta\|_{L^2}^2\le B} T(W,0)(\hat\theta)
=\frac{B}{2}\,\|(I-\gamma W)^{-1}\|_{\mathrm{op}}^2
=\frac{B}{2}\,(1-\gamma\lambda_{\max}(W))^{-2},
\]
and the maximizer aligns with the top eigenfunction of $W$.
Thus, even in this simplest instance, solving the intervention problem requires estimating the leading spectral
features of $W$, which are naturally controlled in operator norm. We discuss this connection in more detail in Section~\ref{section: regret lower bounds}.}
\end{remark}

Our result extends \citep[Theorem 3]{Parise_Ozdaglar_Econometrica}, which was established in an asymptotic setting for the special case where \( W_2 \) is the empirical graphon corresponding to a network sampled from \( W_1 \). In contrast, our theorem applies to a more general, non-asymptotic setting, where \( W_1 \) and \( W_2 \) are arbitrary graphons. This generalization requires a new proof.  {
Although our conclusion is in the spirit of existing stability results for graphon interventions,
Theorem~\ref{theorem: graphon perturbed welfare problem} is proved in a different (fully non-asymptotic) operator-perturbation framework.

}

Theorem \ref{theorem: graphon perturbed welfare problem} suggests that to achieve a desirable level of social welfare, the central planner should use an estimator of the underlying unknown graphon with a small error in the operator norm. 
 A natural choice for an estimator is the empirical graphon associated with the observed network \( \mathbf{A} \). However, this approach is not computationally feasible because the complexity of solving problem \eqref{eq: graphon social welfare problem} depends on the rank of \( W_{\mathbf{A}} \), which is typically very high (see, e.g., \citep{vu_rank_adjencency}).\footnote{A graphon \( W \) is said to have rank \( r \) if there exist orthonormal eigenfunctions \( \phi_i \in L_2[0,1] \) and eigenvalues \( \lambda_i \in \mathbb{R} \) for each \( i \in [r] \) such that \( W(x,y) = \sum_{i = 1}^r \lambda_i \phi_i(x) \phi_i(y) \) for all \( (x,y) \in [0,1]^2 \).}  
To address this issue, we leverage the fact that when the graphon has rank \( r \), problem \eqref{eq: graphon social welfare problem} can be reformulated as a semidefinite program (SDP) with only two variables and a linear matrix inequality (LMI) constraint involving a matrix of size \( r+2 \) (see \citep[footnote 14]{Parise_Ozdaglar_Econometrica}). This insight, together with Theorem \ref{theorem: graphon perturbed welfare problem}, highlights the need for a graphon estimator that simultaneously satisfies both properties P1 and P2, as introduced in the key estimation problem in Section~\ref{section: preliminaries}.

Note that   we can apply Theorem \ref{theorem: graphon perturbed welfare problem} to compute targeted interventions in a network \(\bA\) with agent heterogeneity \(\btheta\) by setting \(W_1 = W_{\bA}\) and \(W_2\) as a low-rank approximation of the adjacency matrix given, for example, by the hard thresholding estimator \eqref{eq: HTE definition}. This approach reduces the computational cost of the optimization problem that the central planner must solve to determine the optimal intervention in \(G(\bA, \btheta)\). Consequently, constructing an effective approximation for the targeted intervention problem does not require explicit graphon estimation, as it suffices to compute optimal interventions based on a low-rank approximation of the adjacency matrix. However, the accuracy of such an approximation depends on the smoothness of the underlying graphon.
 In contrast, graphon estimation becomes particularly useful in the following scenario:  
  two networks, \( \mathbf{A} \) and \( \mathbf{A}' \), are sampled from the same underlying graphon \( W \) and  the network \( \mathbf{A} \) is too large to compute the truncated singular value decomposition (SVD).  
 On the other hand, the network \( \mathbf{A}' \) is moderately small, allowing for the construction of a hard-thresholding estimator \( \widehat{W} \) of \( W \) based on \( \mathbf{A}' \).  
 In this setting, we can use the following strategy to estimate the optimal interventions for a large network \( \mathbf{A} \) with agent heterogeneity \( \boldsymbol{\theta} \) using the estimator \( \widehat{W} \): 
define the following  function 
\begin{align*}
     \theta(x) = \sum_{i = 1}^n \btheta_i \cdot \indicator{x \in [(i - 1)/n; i/n)}.
\end{align*}
Solve the problem~\eqref{eq: graphon social welfare problem} for the graphon game $G(\widehat{W},  \theta)$, and let $\widehat \theta(x)$ be the argument of the corresponding maximum and define the vector $\widehat{\btheta}'_i = \widehat \theta(i/n)$.
Then, Theorem~\ref{theorem: graphon perturbed welfare problem} implies that for the targeted intervention problem for the game $G(\bA, \btheta)$ we have
\begin{align*}
    T(\widehat{\btheta}') & \ge T(\widehat{\btheta}) - \frac{ 2 \gamma \cdot (\Vert \theta \Vert_{L_2} + \sqrt{B})^2 }{(1 - \gamma [\Vert \bA \Vert_{\op} / n \vee \Vert \widehat{\bbW} \Vert_{\op}])^5} \cdot (\Vert \bbW_{\bA} - \bbW \Vert_{\op} + \Vert \bbW - \widehat{\bbW} \Vert_{\op}) \\
    & \ge T(\widehat{\btheta}) - \frac{ 2 \gamma \cdot (\Vert \theta \Vert_{L_2} + \sqrt{B})^2 }{(1 - \gamma [\Vert \bA \Vert_{\op} / n \vee \Vert \widehat{\bbW} \Vert_{\op}])^5} \cdot \left (\frac{1}{n} \Vert \bA - \bQ \Vert_{\op} + \Vert \bbW_{\bQ} - \bbW \Vert_{\op} + \Vert \bbW - \widehat{\bbW} \Vert_{\op} \right ),
\end{align*}
where the second inequality holds by the triangle inequality and Lemma~\ref{lemma: empirical graphon norm}.
The term $\Vert \bA - \bQ \Vert$ can be efficiently bounded via Corollary~\ref{corollary: symmetric matrix concentration}. Bounding \( \Vert \mathbb{W}_{\mathbf{Q}} - \mathbb{W} \Vert \) requires certain assumptions on the graphon \( \mathbb{W} \). In what follows, we focus on two specific cases: when \( \mathbb{W} \) is an SBM (stochastic block model) graphon or a smooth graphon (see Section~\ref{section: graphon estimation}).  
 Note that the estimator \( \widehat{\mathbb{W}} \) can be constructed from a separate network \( \mathbf{A}' \) sampled from the same graphon \( \mathbb{W} \). As discussed earlier, \( \widehat{\mathbb{W}} \) must satisfy properties (P1)–(P2) to ensure both the tractability of the estimation procedure and that the difference \( T(\widehat{\boldsymbol{\theta}}) - T(\widehat{\boldsymbol{\theta}}') \) remains small. We will explore such estimators in Section~\ref{section: graphon estimation}.  

\begin{remark} [Computational cost]
 If \( \boldsymbol{\theta} \) is not orthogonal to the eigenspace corresponding to the eigenvalue \( \lambda_1(\mathbf{A}) \), the problem of finding optimal interventions in the network \( \mathbf{A} \) with agent heterogeneity \( \boldsymbol{\theta} \) can be reformulated as finding a value of \( L \) in the interval \( (-\infty, - (1 - \gamma \|\mathbf{A}\|_{\operatorname{op}} / n)^{-2}) \) such that  
 \begin{align}
\label{eq: hat theta equation}
 \widehat{\boldsymbol{\theta}} = - \left[ \mathbf{I} + L (\mathbf{I} - \gamma \mathbf{A} / n)^2 \right]^{-1} \boldsymbol{\theta}
 \end{align}  
 satisfies \( \| \widehat{\boldsymbol{\theta}} \|_2^2 = B \), see Lemma~\ref{lemma: computational aspects}  in Appendix~\ref{section: computational aspects}. 
 If the network \( \mathbf{A} \) is sparse, an approximate solution can be obtained iteratively using the Conjugate Gradient method. This procedure generates a monotone sequence \( L_1, L_2, \dots \) that converges to the optimal solution \( L \) (see \citep{gander1989constrained, reinsch1967smoothing}). However, each iteration of the algorithm may require \( O(n \cdot |E|) \) operations, where \( |E| \) is the number of edges in \( \mathbf{A} \), since solving the linear equation  
 \begin{align*}
\left[ \mathbf{I} + L (\mathbf{I} - \gamma \mathbf{A} / n)^2 \right] \mathbf{x} = \mathbf{y}
\end{align*}  
 via the Conjugate Gradient method has this complexity. We refer the reader to \cite{gander1989constrained} for the full details and the description of the algorithm.  
 The degenerate case where \( \boldsymbol{\theta} \) is orthogonal to the eigenspace of \( \lambda_1(\mathbf{A}) \) requires a separate treatment. The eigenvalue \( \lambda_1(\mathbf{A}) \) and its corresponding eigenspace can be computed efficiently using the Lanczos algorithm, which has a complexity of \( O(T \cdot |E|) \) for \( T \) iterations.  
 In contrast, the proposed SVT estimator has a significantly lower computational cost of \( O(nR) \), which corresponds to the complexity of performing a randomized truncated singular value decomposition (SVD), where \( R \) is the target rank and \( n \) is the network size.  
 \end{remark}
 \begin{remark}
There are several other scenarios where a low-rank estimator of the underlying graphon function is essential. One notable example is the optimal control of networked dynamical systems. In such systems, the objective is to determine a control input that minimizes a total running cost and a terminal cost, both of which depend on the system state and control input.  
 However, for large-scale networks, solving this optimization problem becomes computationally intractable—even when the system dynamics are linear and the cost function is quadratic. To address this challenge, the theory of \textit{graphon dynamical systems} was introduced in \citep{graphon_LQR_gao, low_rank_LQR_gao}. This framework establishes that as the number of nodes in a networked dynamical system grows indefinitely, the system behavior converges to a limiting \textit{graphon system}, enabling more tractable analysis and control.  
\end{remark}

\section{Low-rank graphon estimation}
\label{section: estimation}
 In this section, we introduce a graphon estimator that satisfies Properties (P1) and (P2).  Our approach consists of two main steps. First, we construct a matrix estimator \( \widehat{\mathbf{Q}}(\mathbf{A}) \) for \( \mathbf{Q} \) using singular value thresholding (see Section \ref{section: weight matrix estimation}). Then, we show that the empirical graphon associated with \( \widehat{\mathbf{Q}}(\mathbf{A}) \) possesses the required properties (see Section \ref{section: graphon estimation}).  
The problem of estimation of $\bQ$ from $\bA$ was considered mainly in Frobenius and cut norms, see, for example, ~\citep{chatterjee2012,gao2014rate,klopp2017,klopp2019,xu2017} and references therein. Our  goal here is different as we are interested in the estimation of the matrix $\bQ$ in spectral norm.

\subsection{Hard thresholding estimator}\label{section: weight matrix estimation}
We follow the classical approach relying on spectral properties of the observed matrix $\bA$. For the adjacency matrix $\bA$, consider its singular value decomposition
\begin{align*}
	\bA = \sum_{i = 1}^n \sigma_i \bu_i \bv_i^\top.
\end{align*}
Then,  the hard thresholding estimator is defined as
\begin{align}
\label{eq: HTE definition}
	\bQhat_\lambda = \sum_{i \in [n] : \sigma_i \ge \lambda} \sigma_i \bu_i \bv_i^\top.
\end{align}

We start by proving a general upper bound on the error of $\bQhat_{\lambda}$ measured in spectral norm.

\begin{theorem}
\label{theorem: UHTE performance}
    Consider a weight matrix $\bQ$ and an adjacency matrix $\bA$ sampled from it. Let $\bQhat_\lambda$ be the hard thresholding estimator of $\bQ$ as defined in \eqref{eq: HTE definition}. Then, for any $\delta \in (0,1]$, with probability at least $1 - \delta$, 
    \begin{align*}
        \Vert \bQ - \bQhat_\lambda \Vert_{\op} \le\lambda + 7 \sqrt{n \rho_n} + \sqrt{\constantBandeira \log (n/\delta)}.
    \end{align*}
    In particular, for $\lambda = 9 \sqrt{n \rho_n}$ we have
	\begin{equation}\label{eq:3}
		\Vert \bQ - \bQhat_\lambda \Vert_{\op} \le 16 \sqrt{n \rho_n} + \sqrt{\constantBandeira \log (n/\delta)},
	\end{equation}
	where $\constantBandeira$ is an absolute constant defined in Appendix~\ref{appendix: tools}.
\end{theorem}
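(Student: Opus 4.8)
The plan is to decompose the error of the hard thresholding estimator into a "signal" part controlled by the threshold $\lambda$ and a "noise" part controlled by the spectral norm of $\bA - \bQ$, and then invoke a matrix concentration bound for $\|\bA - \bQ\|_{\op}$. First I would write $\bQhat_\lambda = \bA - \bR$, where $\bR = \sum_{i \in [n]: \sigma_i < \lambda} \sigma_i \bu_i \bv_i^T$ collects the discarded singular triples; note $\|\bR\|_{\op} = \max_{i: \sigma_i < \lambda} \sigma_i \le \lambda$ by construction. Then by the triangle inequality,
\begin{align*}
    \|\bQ - \bQhat_\lambda\|_{\op} \le \|\bQ - \bA\|_{\op} + \|\bA - \bQhat_\lambda\|_{\op} = \|\bQ - \bA\|_{\op} + \|\bR\|_{\op} \le \lambda + \|\bA - \bQ\|_{\op}.
\end{align*}
(An alternative, slightly sharper route uses Weyl's inequality to compare $\sigma_i(\bA)$ with $\sigma_i(\bQ)$ and splits according to whether $\bQ$'s singular values exceed $\lambda$; but the crude split above already suffices for the stated bound, since we are not chasing the rank here.)

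The remaining task is to bound $\|\bA - \bQ\|_{\op}$ with high probability. Conditionally on $\xi_1, \dots, \xi_n$, the matrix $\bA - \bQ$ has independent, mean-zero, bounded entries above the diagonal (with the diagonal deterministic, $\bA_{ii} - \bQ_{ii} = -\rho_n W(\xi_{(i)},\xi_{(i)})$, contributing a negligible term), and the entrywise variances are bounded by $\bQ_{ij}(1-\bQ_{ij}) \le \bQ_{ij} \le \rho_n$. I would then apply the matrix concentration inequality (Proposition \ref{theorem: matrix concentration} in Appendix \ref{appendix: tools}, of Bandeira–van Handel type) to obtain, with probability at least $1 - \delta$,
\begin{align*}
    \|\bA - \bQ\|_{\op} \le 4\sqrt{n\rho_n} + \sqrt{\constantBandeira \log(n/\delta)},
\end{align*}
where the $4\sqrt{n\rho_n}$ term comes from the $(1+\epsilon)$-type leading constant times $\sqrt{\max_i \sum_j \Var(\bA_{ij})} \le \sqrt{n\rho_n}$, and the logarithmic term captures the fluctuation governed by the maximal entry size (which is $1$). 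Combining this with the deterministic bound above gives the first claim; substituting $\lambda = 6\sqrt{n\rho_n}$ and bounding $6 + 4 = 10$ yields \eqref{eq:3}.

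The main obstacle is not conceptual but bookkeeping: one must handle the conditioning on $\xi_1, \dots, \xi_n$ cleanly (the bound on $\|\bA - \bQ\|_{\op}$ is uniform in the $\xi_i$, so the probability statement survives integrating them out), verify that the hypotheses of Proposition \ref{theorem: matrix concentration} are met with the variance proxy $\rho_n$ and the $L^\infty$ bound $1$, and absorb the deterministic diagonal discrepancy $\|\mathrm{diag}(\bQ_{ii})\|_{\op} \le \rho_n \le \sqrt{n\rho_n}$ into the constants. The only place where care is genuinely needed is ensuring the leading constant in the concentration bound is indeed at most $4$ after accounting for symmetrization and the diagonal; if the cited proposition states it with a different constant, the value $\lambda = 6\sqrt{n\rho_n}$ and the final $10\sqrt{n\rho_n}$ would need to be adjusted accordingly, but the structure of the argument is unchanged.
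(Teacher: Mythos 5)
Your proposal is correct and follows essentially the same route as the paper: a triangle-inequality split into the thresholding term $\lambda$ plus $\Vert \bA - \bQ \Vert_{\op}$, with the deterministic diagonal of $\bQ$ peeled off (bounded by $\rho_n \le \sqrt{n\rho_n}$) so that Proposition~\ref{theorem: matrix concentration} applies conditionally on the $\xi_i$'s with $\sigma \le \sqrt{n\rho_n}$ and $\sigma_* \le 1$, after which the conditioning is removed since the bound is uniform. The only bookkeeping difference is that the paper fixes $\epsilon$ so that the leading concentration constant is $3$, and the extra $\sqrt{n\rho_n}$ making up the $4$ comes from the diagonal term, exactly as you anticipated in your closing caveat.
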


\begin{remark}
In~\citep{klopp2019}, the authors  consider the problem of estimating the matrix $\bQ$ in  cut-norm and show that for $\rho_n > 1/n$, there exists a constant $C$ such that
\begin{align*}
	\inf_{\bQhat} \sup_{\bQ \in \text{SBM}(2, \rho_n)} \bbE \Vert \bQ - \bQhat \Vert_{\square} \ge C \sqrt{\frac{\rho_n}{n}},
\end{align*}
where $\text{SBM}(2, \rho_n)$ denotes a Stochastic Block Model (SBM) with two communities.\footnote{
We say that a matrix $\bQ$ belongs to $\text{SBM}(k, \rho_n)$ if there exists a symmetric matrix $\bS \in [0, 1]^{k \times k}$ such that $\max_{k_1, k_2} \bS_{k_1, k_2} = \rho_n$, and a mapping $z: [n] \to [k]$ such that $\bQ_{i j} = \bS_{z(i) z(j)}$}
Since $\Vert \cdot \Vert_{\op} \ge n \Vert \cdot \Vert_{\square}$ (see Appendix \ref{appendix: tools}), this implies that for $\lambda = O(\sqrt{n \rho_n})$ the estimator $\bQhat_\lambda$ is minimax optimal in spectral norm, since
$$\inf_{\bQhat} \sup_{\bQ } \bbE \Vert \bQ - \bQhat \Vert_{\op} \ge 
n \inf_{\bQhat} \sup_{\bQ } \bbE \Vert \bQ - \bQhat \Vert_{\square} \ge
n \inf_{\bQhat} \sup_{\bQ\in \text{SBM}(2, \rho_n) } \bbE \Vert \bQ - \bQhat \Vert_{\square}\ge C\sqrt{\rho_n n}.$$
\end{remark} 

{\begin{remark}[On the $\lambda$-term in Theorem~2]
The additive $\lambda$ term in Theorem~2 reflects the bias induced by hard thresholding and cannot be removed
uniformly: even in a noiseless setting $\mathbf A=\mathbf Q$, if $\mathbf Q$ has a singular value $\sigma\in(0,\lambda)$
then $\widehat{\mathbf Q}_\lambda$ discards the associated component and $\|\mathbf Q-\widehat{\mathbf Q}_\lambda\|_{\mathrm{op}}
\ge \sigma$.
\end{remark}

\begin{remark}[Operator norm versus Frobenius norm]
Most of the graphon/matrix estimation literature measures error in (normalized) Frobenius norm or related
$L^2$-type metrics. In our setting, these norms lead to a very different fluctuation scale than the operator norm.
Indeed, since the entries of $\mathbf A-\mathbf Q$ are independent, centered, and bounded with
$\mathrm{Var}(A_{ij})\le Q_{ij}\le \rho_n$, we have
\[
\mathbb E\|\mathbf A-\mathbf Q\|_F^2
=\sum_{i\neq j}\mathrm{Var}(A_{ij})
\lesssim n^2\rho_n,
\qquad\text{so typically}\qquad
\|\mathbf A-\mathbf Q\|_F = O_{\mathbb P}(n\sqrt{\rho_n}).
\]
By contrast, non-commutative (matrix) concentration yields the sharper spectral-scale bound
\[
\|\mathbf A-\mathbf Q\|_{\mathrm{op}} = O_{\mathbb P}\!\left(\sqrt{n\rho_n}+\sqrt{\log n}\right),
\]
which is the scale captured (up to logarithmic terms) by Theorem~\ref{theorem: UHTE performance}.

This distinction matters for our application: Theorem~\ref{theorem: graphon perturbed welfare problem} controls the difference of intervention objectives and
maximizers through $\|W_1-W_2\|_{\mathrm{op}}$, hence we require operator-norm accuracy. While one always has
$\|\cdot\|_{\mathrm{op}}\le \|\cdot\|_F$, using a Frobenius guarantee to control operator norm would lead to an
$O_{\mathbb P}(n\sqrt{\rho_n})$ bound, which is looser by a factor $\sqrt n$ compared to the intrinsic spectral noise
scale $\sqrt{n\rho_n}$.

The proof techniques also differ accordingly. Frobenius-norm analyses typically exploit ``energy'' control such as
\[
\|\mathbf A-\widehat{\mathbf Q}_\lambda\|_F^2
=\sum_{\sigma_i(\mathbf A)<\lambda}\sigma_i(\mathbf A)^2,
\]
which can be bounded via tail-energy/approximation arguments (and yields improvements when the signal is
approximately low rank).
In contrast, the operator-norm bound in Theorem~\ref{theorem: UHTE performance} relies on two ingredients tailored to worst-direction control:
(i) a matrix concentration inequality for $\|\mathbf A-\mathbf Q\|_{\mathrm{op}}$, and (ii) the basic spectral property of
hard thresholding that $\|\mathbf A-\widehat{\mathbf Q}_\lambda\|_{\mathrm{op}}\le \lambda$ because the discarded singular
values are all below $\lambda$.
\end{remark}

}

\begin{remark} 
    To simplify results, in our subsequent discussion, we fix $\lambda = 9 \sqrt{n \rho_n}$ so that we can use \eqref{eq:3}. All our results  can be generalized for arbitrary $\lambda > 7 \sqrt{n \rho_n} + \sqrt{\constantBandeira \log (n/\delta)}$.
\end{remark}

\subsubsection{Generalization to missing links}
Theorem~\ref{theorem: UHTE performance} can be extended to the case of missing links. In some applications the network of interactions is not fully observed, or the size of networks is so huge that we need to subsample the network. In such cases, we may consider the matrix $\widetilde \bA = \bM \circ \bA$, where $\bM$ is a symmetric mask with independent entries $\bM_{ij}$, each a Bernoulli random variable with parameter $p$.

In this setting, $\widetilde \bA$ can be considered as a sample from the sparse graphon model with sparsity parameter $\rho_n p$. Thus, the singular value thresholding estimator based on the matrix $\widetilde \bA$ satisfies the following theoretical guarantees:
\begin{align*}
	\Vert p \bQ - \bQhat_\lambda(\widetilde{\bA}) \Vert_{\op} \le \lambda + 7 \sqrt{n p \rho_n} + \sqrt{\constantBandeira \log (n/\delta)}
\end{align*}
with probability at least $1 - \delta$, leading to the following corollary.

\begin{corollary}
\label{corollary: missing links corrolary}
    Let $\widetilde{\bA}$ be obtained as above. Then,
    \begin{align*}
        \left \Vert \bQ - \bQhat_\lambda \left ( \frac{1}{p} \widetilde{\bA} \right ) \right  \Vert_{\op} \le \frac{\lambda}{p} + 7 \sqrt{\frac{n \rho_n}{p}} + \frac{1}{p} \sqrt{\constantBandeira \log (n/\delta)}
    \end{align*}
    with probability at least $1 - \delta$.
\end{corollary}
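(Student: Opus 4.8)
The plan is to prove Corollary~\ref{corollary: missing links corrolary} by a direct reduction to Theorem~\ref{theorem: UHTE performance}: masking the network with $\bM$ and then rescaling by $1/p$ turns the subsampled problem into an instance of the fully-observed problem with weight matrix $\bQ$ replaced by $p\bQ$ and sparsity parameter $\rho_n$ replaced by $p\rho_n$. First I would pin down the law of $\widetilde{\bA} = \bM\circ\bA$. Conditionally on the latent points $\xi_1,\dots,\xi_n$, the entries $\{\bA_{ij}\}_{i<j}$ are independent with $\bA_{ij}\sim\mathrm{Bernoulli}(\bQ_{ij})$ by \eqref{eq: A sampling}, while the mask entries $\{\bM_{ij}\}_{i<j}$ are independent $\mathrm{Bernoulli}(p)$ and independent of $\bA$; hence $\{\widetilde{\bA}_{ij} = \bM_{ij}\bA_{ij}\}_{i<j}$ are conditionally independent with $\widetilde{\bA}_{ij}\sim\mathrm{Bernoulli}(p\bQ_{ij})$, and $\widetilde{\bA}$ is symmetric with zero diagonal. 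Since $\bQ_{ij} = \rho_n W(\xi_{(i)},\xi_{(j)})$ and $W\le 1$, the conditional mean matrix of $\widetilde{\bA}$ is $p\bQ = (p\rho_n)\,[W(\xi_{(i)},\xi_{(j)})]$, with all entries at most $p\rho_n$. In other words, $\widetilde{\bA}$ is exactly an adjacency matrix produced by the sampling scheme \eqref{eq: Q sampling}--\eqref{eq: A sampling} from the same graphon $W$ and the same latent points, but with sparsity $p\rho_n$, so its weight matrix is $p\bQ$.

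With this identification in hand, I would apply Theorem~\ref{theorem: UHTE performance} to $\widetilde{\bA}$ (with $\bQ\mapsto p\bQ$, $\rho_n\mapsto p\rho_n$, threshold $\lambda$): with probability at least $1-\delta$,
\[
\bigl\Vert p\bQ - \bQhat_\lambda(\widetilde{\bA})\bigr\Vert_{\op} \le \lambda + 4\sqrt{n p\rho_n} + \sqrt{\constantBandeira\log(n/\delta)},
\]
which is exactly the inequality announced just before the corollary. Dividing both sides by $p>0$, pulling the scalar inside the operator norm, and noting that the SVT estimator of $\bQ$ built from the rescaled observation is $\bQhat_\lambda(\tfrac1p\widetilde{\bA}) = \tfrac1p\bQhat_\lambda(\widetilde{\bA})$, we obtain on the same event
\begin{align*}
\Bigl\Vert\bQ - \bQhat_\lambda\!\bigl(\tfrac1p\widetilde{\bA}\bigr)\Bigr\Vert_{\op}
&= \frac1p\bigl\Vert p\bQ - \bQhat_\lambda(\widetilde{\bA})\bigr\Vert_{\op} \\
&\le \frac{\lambda}{p} + \frac{4}{p}\sqrt{n p\rho_n} + \frac1p\sqrt{\constantBandeira\log(n/\delta)} \\
&= \frac{\lambda}{p} + 4\sqrt{\frac{n\rho_n}{p}} + \frac1p\sqrt{\constantBandeira\log(n/\delta)},
\end{align*}
which is the claimed bound.

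There is no real obstacle here — the corollary is a reduction plus a rescaling — so the only point that deserves care is the first step: verifying that masking preserves the conditional independence of the above-diagonal entries and produces precisely the mean matrix $p\bQ$ with all entries at most $p\rho_n$. Relatedly, one should check that the proof of Theorem~\ref{theorem: UHTE performance} uses only these structural facts (independent above-diagonal Bernoulli entries and the uniform bound $\bbE[\widetilde{\bA}_{ij}\mid\xi]\le p\rho_n$, the latter producing the $\sqrt{n p\rho_n}$ term via Proposition~\ref{theorem: matrix concentration}) and nothing specific to the ordered-statistics construction \eqref{eq: Q sampling}; granting this, the corollary follows immediately.
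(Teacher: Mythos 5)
Your proposal is correct and follows essentially the same route as the paper, which likewise treats $\widetilde{\bA}$ as a sample from the sparse graphon model with sparsity parameter $p\rho_n$ (so that its conditional mean is $p\bQ$), invokes Theorem~\ref{theorem: UHTE performance}, and rescales by $1/p$. The one quibble is your identity $\bQhat_\lambda\bigl(\tfrac1p\widetilde{\bA}\bigr)=\tfrac1p\bQhat_\lambda(\widetilde{\bA})$: rescaling a matrix rescales its singular values, so in fact $\bQhat_\lambda\bigl(\tfrac1p\widetilde{\bA}\bigr)=\tfrac1p\bQhat_{p\lambda}(\widetilde{\bA})$; this is the same implicit identification the paper makes, and since applying the general bound of Theorem~\ref{theorem: UHTE performance} with threshold $p\lambda\le\lambda$ gives $\lambda + 4\sqrt{n\rho_n/p}+\tfrac1p\sqrt{\constantBandeira\log(n/\delta)}$, which is no larger than the stated right-hand side, the claimed inequality still holds.
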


The corollary above generalizes all subsequent results in the context of estimators to the case of missing links.

\subsection{Graphon estimator}
\label{section: graphon estimation}

In this section we use the empirical graphon of the  hard thresholding estimator, $W_{\bQhat_\lambda}$, as an estimator of the graphon $W$. Note that by triangular inequality, and by Lemma \ref{lemma: empirical graphon norm}

\begin{align*}
    \opnorm{\rho_n \bbW - \bbW_{\bQhat_\lambda}} &\le \opnorm{\rho_n \bbW - \bbW_{\bQ}} +  \opnorm{\bbW_{\bQ}- \bbW_{\bQhat_\lambda}} \\
    &\le \opnorm{\rho_n \bbW - \bbW_{\bQ}} + \frac{1}{n} \Vert \bQ - \bQhat_\lambda \Vert_{\op}. 
\end{align*}
We obtained a bound for the second term above in Theorem~\ref{theorem: UHTE performance}.
Hence, to get a bound on the error of our estimator, it is enough to estimate the approximation error $\opnorm{\rho_n W - W_{\bQ}}$, which measures the distance between the true  sparse graphon $\rho_n W$  and its discretized version, the empirical graphon $W_{\bQ}$,
sampled at the unobserved random  points $\xi_1\le \dots\le \xi_n$.

 It is clear that the approximation error depends on the topology of the space of the considered graphons. We will study  two cases usually considered in the literature,
\textit{step function graphons} (see Sect \ref{sec:SBM}) and \textit{smooth graphons} (see Sect \ref{sec:smooth}). For these two cases, we  derive bounds on the estimation error and show that the rank of our estimator is sublinear in $n$. 

\subsubsection{SBM graphons}\label{sec:SBM}
\begin{definition}
\label{definition: SBM}
Given a positive integer $k$, we say that $W$ is a Stochastic Block Model (SBM) graphon with $k$ communities if there is a partition of $[0, 1]$ into intervals 
$P_1=[0,e_1), \dots, P_k=(e_{k-1},1]$, for some $e_1< \hdots<e_{k-1}$, and a symmetric matrix $\bS \in [0, 1]^{k \times k}$ such that
\begin{align*}
	W(x, y) = \sum_{1 \le i, j \le k} \bS_{ij} \indicator{(x, y) \in P_i \times P_j}.
\end{align*}
\end{definition}
Note that networks generated from the SBM graphon defined above correspond to networks generated from  the classic Stochastic Block Model (SBM) \citep{SBM_holland}.
The following result provides a bound on the error and the rank  of the  estimator $W_{\bQhat_\lambda}$ when $W$  is an SBM graphon.
\begin{theorem}
\label{theorem: SBM convergence rate}
    Consider a graphon $W$ that is an SBM graphon with $k$ communities.  Assume that
    $n \ge k \log (4k/\delta)$ and 
    $n \rho_n \ge \constantBandeira \log (2n/\delta)$. Then, for any $\delta \in (0,1]$, with  probability $1 - \delta$, it holds that
    \begin{align*}
         i) &\quad   \opnorm{\rho_n \bbW - \bbW_{\bQhat_\lambda}} \le
         \rho_n \left (\frac{32 k \sum_{a = 1}^{k - 1} e_a}{n} \log \left(\frac{4k}{\delta}\right)\right )^{1/4} + 11 \sqrt{\frac{\rho_n}{n}}, \\
          ii) &\quad  \rank(\bQhat_{\lambda}) \le k.
    \end{align*}
\end{theorem}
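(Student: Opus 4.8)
The plan is to split the error using the triangle inequality that was already derived in Section \ref{section: graphon estimation}, namely $\opnorm{\rho_n \bbW - \bbW_{\bQhat_\lambda}} \le \opnorm{\rho_n \bbW - \bbW_{\bQ}} + \frac1n \|\bQ - \bQhat_\lambda\|_{\op}$. The second term is controlled by Theorem \ref{theorem: UHTE performance} with $\lambda = 6\sqrt{n\rho_n}$: on an event of probability at least $1-\delta/2$ (using the assumption $n\rho_n \ge \constantBandeira \log(2n/\delta)$ to absorb the logarithmic term into the $\sqrt{n\rho_n}$ term) it is at most $11\sqrt{\rho_n/n}$. So the whole job reduces to bounding the approximation error $\opnorm{\rho_n \bbW - \bbW_{\bQ}}$, which is purely about how well the random discretization $W_{\bQ}$ approximates $\rho_n W$ when $W$ is a $k$-block SBM.

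For the approximation term I would argue as follows. Since $\bQ_{ij} = \rho_n W(\xi_{(i)}, \xi_{(j)})$ and $W$ is constant on blocks $P_a \times P_b$, the matrix $\bQ$ is exactly $\rho_n \bS$ read off through the community assignment of the ordered $\xi$'s; correspondingly $W_{\bQ}$ is $\rho_n$ times an SBM graphon with the \emph{same} block values $\bS$ but with the block boundaries $e_1,\dots,e_{k-1}$ replaced by the empirical boundaries determined by the counts $n_a := \#\{i : \xi_i \in P_a\}$. Thus $\rho_n W - W_{\bQ}$ is supported on the symmetric difference of the true and empirical block partitions, which is a union of "stripes" of total width $\sum_{a=1}^{k-1} |F_n(e_a) - e_a| \cdot O(1)$ where $F_n$ is the empirical CDF of the uniform sample. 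On this stripe set of Lebesgue measure $\varepsilon$ the kernel is bounded by $\rho_n$ (since $\bS \in [0,1]^{k\times k}$), so a crude but sufficient bound is $\opnorm{\rho_n \bbW - \bbW_{\bQ}} \le \|\rho_n W - W_{\bQ}\|_{L_2} \le \rho_n \sqrt{\varepsilon}$, using that the operator norm of an integral operator is at most the Hilbert–Schmidt ($L_2$) norm of its kernel. It then remains to control $\varepsilon$; by the DKW inequality, or more simply by a union bound over the $k-1$ boundaries using a Bernstein/Chernoff bound on each $\mathrm{Binomial}(n, e_a)$ deviation $|n_a^{\le} - n e_a|$, one gets $\max_a |F_n(e_a) - e_a| \le \sqrt{\tfrac{2}{n}\log(4k/\delta)}$ with probability at least $1-\delta/2$, hence $\varepsilon \le 2k \sum_{a=1}^{k-1} e_a \sqrt{\tfrac{2}{n}\log(4k/\delta)}$ up to constants, which after plugging in and bounding gives the stated $\rho_n\big(\tfrac{32 k \sum_a e_a}{n}\log(4k/\delta)\big)^{1/4}$. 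Combining the two events via a union bound yields part $i)$.

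Part $ii)$ is immediate: $\bQhat_\lambda$ is a truncation of the SVD of $\bA$, but its column space is contained in that of $\bQ$'s population structure only through $\bA$ — more carefully, $\bQ = \rho_n \bS$ pulled back through $z$ has rank at most $k$, and one shows $\rank(\bQhat_\lambda) \le \rank(\bQ) \le k$. Actually the cleanest route is: $\bQhat_\lambda$ has rank equal to the number of singular values of $\bA$ exceeding $\lambda$; I would instead note directly that any $\widehat{\bQ}$ we care about here has rank at most $k$ because... hmm, this needs the bound $\|\bQ - \bQhat_\lambda\|_{\op} < \sigma_k$-type argument, so let me phrase it as: the number of $\sigma_i(\bA) \ge \lambda = 6\sqrt{n\rho_n}$ is at most $k$, which follows from $\sigma_{k+1}(\bA) \le \sigma_{k+1}(\bQ) + \|\bA - \bQ\|_{\op} = 0 + \|\bA-\bQ\|_{\op} \le 4\sqrt{n\rho_n} + \sqrt{\constantBandeira\log(2n/\delta)} < 6\sqrt{n\rho_n}$ on the same event, using $\rank(\bQ) \le k$ so $\sigma_{k+1}(\bQ)=0$, Weyl's inequality, and Proposition \ref{theorem: matrix concentration}. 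Hence $\rank(\bQhat_\lambda) \le k$ on the event already in play, and no extra probability is lost.

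The main obstacle I anticipate is the approximation-error step: getting the dependence on $k$ and on the boundary locations $\{e_a\}$ right, and in particular justifying that the relevant "mismatch measure" is $\sum_a e_a$ (not, say, $k^2$ or $\sum_a \min(e_a,1-e_a)$) and that a Hilbert–Schmidt bound on the stripe set is tight enough to reproduce the exact constants in the theorem. The probabilistic control of $\max_a|F_n(e_a)-e_a|$ is routine, and the SVD/rank argument is a one-line Weyl estimate; the delicate accounting is entirely in translating "perturbed block boundaries" into an $L_2$-norm bound on the kernel difference with the stated constant $32$.
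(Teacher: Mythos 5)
Your proposal follows essentially the same route as the paper's proof: split via the triangle inequality, bound $\frac1n\Vert\bQ-\bQhat_\lambda\Vert_{\op}$ by Theorem~\ref{theorem: UHTE performance} under $n\rho_n\ge \constantBandeira\log(2n/\delta)$, bound the approximation term through the Frobenius/Hilbert--Schmidt norm of the kernel difference on the boundary-mismatch region whose two-dimensional measure is at most twice the sum of the boundary deviations, control those deviations by Bernstein plus a union bound over the $k-1$ boundaries, and get the rank bound from $\rank(\bQ)\le k$, Weyl's inequality and the same concentration event (the paper's Proposition~\ref{proposition: singular value rank}). The only caveat is the one you flag yourself: a uniform DKW-type bound $\max_a|F_n(e_a)-e_a|\lesssim\sqrt{\log(4k/\delta)/n}$ cannot produce the factor $\sum_{a}e_a$; you must use the per-boundary Bernstein bound with variance $e_a(1-e_a)\le e_a$ and then Cauchy--Schwarz over the $k-1$ boundaries (exactly the paper's accounting, which also uses $n\ge k\log(4k/\delta)$ to absorb the lower-order $\frac{2k}{3n}\log(4k/\delta)$ term), after which the constant $32$ comes out.
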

The proof of this result is given in Appendix \ref{appendix: proof of SBM convergence rate}. Note that assumption $n \ge k \log (4k/\delta)$ is not restrictive, as otherwise the bound provided by i) is trivial. The next proposition shows that the upper bound given by Theorem \ref{theorem: SBM convergence rate} is optimal in a minimax sense (up to dependency on $k$). 

\begingroup


\begin{theorem}
\label{theorem: SBM lower bound}
For any sequence $\rho_n>0$, we have
\begin{align*}
\underset{\widehat{W}}{\inf}
\underset{W\in \mathcal W[2]}{\sup}
    {\P}_{W} \left ( \opnorm{\widehat \bbW(\bA) - \rho_n \bbW } \ge \rho_n n^{-1/4} / 16 \right ) \ge \frac{1}{4},
\end{align*}
where $\P_{W}$ denotes the probability distribution of sampled adjacency matrices $\bA$ when the underlying graphon is $\rho_n W$, $\inf_{\widehat{W}} $ is the infimum over all estimators and $\mathcal W[2]$ is the class of SBM graphons with two communities.
\end{theorem}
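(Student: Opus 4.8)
The plan is to use the classical two-point (Le~Cam) method, since we only need a constant ($1/4$) lower bound on the error probability. It suffices to exhibit two SBM graphons $W_0,W_1\in\mathcal W[2]$ that are simultaneously (i) well separated in operator norm, $\opnorm{\rho_n\bbW_0-\rho_n\bbW_1}\ge\rho_n n^{-1/4}/8$, and (ii) statistically close, $\mathrm{TV}(\P_{W_0},\P_{W_1})\le 1/2$. Given such a pair, for an arbitrary estimator $\widehat W$ I would form the nearest-neighbour test $\psi:=\argmin_{j\in\{0,1\}}\opnorm{\widehat\bbW(\bA)-\rho_n\bbW_j}$; by the triangle inequality and (i), the event $\{\psi\ne j\}$ forces $\opnorm{\widehat\bbW(\bA)-\rho_n\bbW_j}\ge\tfrac12\opnorm{\rho_n\bbW_0-\rho_n\bbW_1}\ge\rho_n n^{-1/4}/16$, so that $\P_{W_j}\bigl(\opnorm{\widehat\bbW(\bA)-\rho_n\bbW_j}\ge\rho_n n^{-1/4}/16\bigr)\ge\P_{W_j}(\psi\ne j)$. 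Averaging over $j\in\{0,1\}$ and using the standard bound $\P_{W_0}(\psi=1)+\P_{W_1}(\psi=0)\ge 1-\mathrm{TV}(\P_{W_0},\P_{W_1})$ yields
\[
\sup_{W\in\mathcal W[2]}\P_W\bigl(\opnorm{\widehat\bbW(\bA)-\rho_n\bbW}\ge \rho_n n^{-1/4}/16\bigr)\ \ge\ \tfrac12\bigl(1-\mathrm{TV}(\P_{W_0},\P_{W_1})\bigr)\ \ge\ \tfrac14 ,
\]
which is the claim. Everything thus reduces to constructing the two graphons.

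For the construction I would fix $\tau:=(4\sqrt n)^{-1}$, set $p_0:=1/2$ and $p_1:=1/2+\tau$, and take $W_j(x,y):=\indicator{x<p_j}\indicator{y<p_j}$. Each $W_j$ is an SBM graphon with two communities — partition $[0,p_j),\,[p_j,1]$ and block matrix $\bigl(\begin{smallmatrix}1&0\\0&0\end{smallmatrix}\bigr)$ — so $W_0,W_1\in\mathcal W[2]$. For (i), note that $\bbW_j$ is the rank-one operator $f\mapsto\bigl(\int_0^{p_j}f\bigr)\one_{[0,p_j)}$; hence $\bbW_0-\bbW_1$ is self-adjoint, vanishes off the two-dimensional subspace $V:=\mathrm{span}\{\one_{[0,p_0)},\one_{[p_0,p_1)}\}$, and in the orthonormal basis $p_0^{-1/2}\one_{[0,p_0)},\ \tau^{-1/2}\one_{[p_0,p_1)}$ of $V$ it is represented by the matrix $\left(\begin{smallmatrix}0&-\sqrt{p_0\tau}\\-\sqrt{p_0\tau}&-\tau\end{smallmatrix}\right)$, whose spectral radius equals $\tfrac12\bigl(\tau+\sqrt{\tau^2+4p_0\tau}\bigr)\ge\sqrt{p_0\tau}$. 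Therefore $\opnorm{\rho_n\bbW_0-\rho_n\bbW_1}\ge\rho_n\sqrt{p_0\tau}=\rho_n(8\sqrt n)^{-1/2}=\rho_n n^{-1/4}/(2\sqrt2)>\rho_n n^{-1/4}/8$, as needed.

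For (ii), I would exploit that $W_j$ depends on $(x,y)$ only through community membership: for the order statistics $\xi_{(1)}\le\cdots\le\xi_{(n)}$, node $i$ belongs to community $1$ if and only if $i\le N_j:=\#\{\ell:\xi_\ell<p_j\}$, so the conditional law of $\bA$ given $N_j$ is the \emph{same} Markov kernel for $j=0,1$ (Bernoulli$(\rho_n)$ edges among the first $N_j$ nodes and none elsewhere), while $N_j\sim\mathrm{Bin}(n,p_j)$. The data-processing inequality for KL divergence, together with additivity of KL over i.i.d.\ coordinates, then gives
\[
\KL{\P_{W_0}}{\P_{W_1}}\ \le\ \KL{\mathrm{Bin}(n,p_0)}{\mathrm{Bin}(n,p_1)}\ =\ n\,\KL{\mathrm{Ber}(1/2)}{\mathrm{Ber}(1/2+\tau)}\ =\ -\tfrac n2\log(1-4\tau^2)\ \le\ \tfrac{2n\tau^2}{1-4\tau^2}\ \le\ \tfrac14 ,
\]
using $-\log(1-u)\le u/(1-u)$ and $4\tau^2=(4n)^{-1}\le 1/4$. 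Pinsker's inequality then gives $\mathrm{TV}(\P_{W_0},\P_{W_1})\le\sqrt{\KL{\P_{W_0}}{\P_{W_1}}/2}\le\sqrt{1/8}<1/2$, which is (ii).

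The main obstacle is the operator-norm separation in step (i): one must realise that the informative perturbation is of the community \emph{proportion} (not of the edge probabilities), and that shifting the community boundary by $\tau$ moves $\bbW$ by $\Theta(\sqrt\tau)$ in operator norm rather than $\Theta(\tau)$ — because the rank-one ranges of $\bbW_0$ and $\bbW_1$ subtend an angle of order $\sqrt\tau$ — which, combined with the constraint $\tau=O(n^{-1/2})$ forced by indistinguishability, is precisely what produces the $n^{-1/4}$ rate. The rest is routine: the reduction is textbook Le~Cam, and once one observes that the latent-position model factors through the binomial count $N_j$, the total-variation bound follows from data processing and Pinsker, and in fact does not involve $\rho_n$ or the block matrix at all.
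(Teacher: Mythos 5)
Your proof is correct and follows essentially the same route as the paper: a two-point (Le Cam) argument built on a pair of two-community SBM graphons that differ only by an $O(n^{-1/2})$ shift of the community boundary, with the operator-norm separation of order $n^{-1/4}$ computed on the two-dimensional block subspace, and the KL divergence reduced, via conditioning on the number of nodes in the first community, to a binomial comparison of order $n\tau^2$. The only differences are cosmetic: you use a $\{1,0\}$ corner-block pair instead of the paper's $3/4$--$1/4$ blocks, and you finish with an explicit nearest-neighbour test plus Pinsker rather than invoking the paper's two-hypothesis testing theorem.
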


The proof of this result can be found in Appendix \ref{appendix: proof of SBM lower bound}.

\subsubsection{Smooth graphons}\label{sec:smooth}

\begin{definition}
\label{definition: holder graphon}
Given $\alpha>0$, $H>0$, we say that $W : [0,1]^2 \rightarrow [0,1]$ is an $(H,\alpha)$-H\"{o}lder continuous graphon if
$$
\vert W(x',y')-P_{\lfloor \alpha\rfloor}((x,y), (x'-x,y'-y))   \vert \le H (\vert x'-x\vert^{\alpha-\lfloor \alpha\rfloor}+\vert y'-y\vert^{\alpha-\lfloor \alpha\rfloor})
$$
for all $(x',y'), (x,y) \in [0,1]^2$, where $\lfloor \alpha\rfloor$ is the maximal integer less than $\alpha$, and the function $(x',y')\mapsto P_{\lfloor \alpha\rfloor}((x,y), (x'-x,y'-y))$ is the Taylor expansion of $W$ of degree $\lfloor \alpha\rfloor$ at point $(x,y)$. 
\end{definition}
We denote the class of all $(H,\alpha)$-H\"{o}lder continuous graphons by $\Sigma(\alpha,H)$. 
\begin{remark}
It follows from the standard embedding theorems that, for any $W\in \Sigma(\alpha, L)$ and any $(x',y'), (x,y) \in [0,1]^2$,
\begin{equation}\label{lip}
\vert W(x',y') - W(x,y)\vert \le M (\vert x'-x\vert^{\alpha\wedge 1}+\vert y'-y\vert^{\alpha\wedge 1})
\end{equation}
where $M$ is a constant depending only on $\alpha$ and $H$. In the following, we will use only this last property of $W\in \Sigma(\alpha, L)$. For example, an $L$-Lipschitz function is $(L, 1)$-H\"older continuous.     
\end{remark}

{
In our setting, when labels of an observed graph are defined by the order of latent variables $\xi_1, \ldots, \xi_n$, smoothness of a graphon $W(\cdot, \cdot)$ is naturally inherited by the matrix $\bQ$ (see definition~\eqref{eq: Q sampling}) since $\xi_{(i)}, i \in [n],$ concentrates around $i/n$. Instead of taking truncated SVD of $\bA$ as an estimator of $\bQ$, we adapt a different strategy. Define an averaging operator $\cS_m(\cdot)$ acting on $n\times n$ matrices and parameterized by an integer $m$ as follows. Let $n = m s + \ell$. We set 
\begin{align*}
    \cB_i = \begin{cases}
        \{i' \mid \lfloor (i' - 1)/m \rfloor = \lfloor (i - 1)/ m \rfloor\}, & \text{if } i \le m (s - 1), \\
        \{m(s - 1) + 1, \ldots, m s + \ell \} & \text{if } i > m(s - 1).
    \end{cases}
\end{align*}
For a matrix $\bM \in \bbR^{n \times n}$ define $\cS_m (\bM)$ by
\begin{align*}
    \cS_m(\bM)_{ij} = \frac{1}{|\cB_i||\cB_j|} \sum_{i' \in \cB_i, j' \in \cB_j} \bM_{i' j'}.
\end{align*}
For brevity, we set $\cB(i, j) = \cB_i \times \cB_j $. Then, the following theorem holds, which proof can be found in Appendix~\ref{section: upper bound proof - smooth graphons}.
\begin{theorem}
\label{theorem: Holder averaging}
    Let $W(\cdot, \cdot)$ be an $(H, \alpha)$-H\"older graphon and $\rho_n$ be the sparsity parameter. Set $\beta = \alpha \wedge 1$. Suppose that
    \begin{align}
         n^2 \rho_n  \ge \log^{4 \beta + 1} \left (\frac{9 n}{\delta} \right ) & \quad \text{and} \quad (n^2 \rho_n)^{\beta / (2 \beta + 1)} \ge \log^{3/2} \left (\frac{9n}{\delta} \right ) \label{eq: assumption sqrt-rho}\\
        & n^{2 \beta - 1}  > \rho_n \label{eq: m is larger 1 assumption}
    \end{align}
    Set $m = \left \lceil n^{(2 \beta - 1)/(2 \beta + 1)} \rho_n^{-1/(2 \beta + 1)} \right \rceil.$ 
    Then, there are constants $C_ 1= C_1(H, \alpha)$ and $C_ 2= C_2(H, \alpha)$ such that
    \begin{align*}
        \Vert\cS_m(\bA) - \bQ \Vert_{\op} \le  C_1 \rho_n n \cdot \left ( \frac{\log(4/\delta)}{n} \right )^{\beta/2}  + C_2\rho_n^{\frac{\beta + 1}{2 \beta + 1}} n^{1/(2 \beta + 1) 
    }
    \end{align*}
    holds with probability at least $1 - \delta$.
\end{theorem}
Let us comment conditions of the theorem. Condition~\eqref{eq: m is larger 1 assumption} ensures that $m \ge 2$. Importantly, if this condition does not hold, then 
\begin{align*}
    \rho_n^{\frac{\beta + 1}{2 \beta + 1}} n^{1/(2 \beta + 1)} \ge \sqrt{n \rho_n},
\end{align*}
so Theorem~\ref{theorem: UHTE performance} should be applied in this case.  Note that condition~\eqref{eq: assumption sqrt-rho} is not restricting as it is much weaker than a usual assumption $n \rho_n \gtrsim 1$ that we use below.

In what follows, we fix $m = m(\beta)$ as in the statement of Theorem~\ref{theorem: Holder averaging}. We define a low-rank estimator of $\bQ$ using the SVD decomposition of $\cS_m(\bA) = \sum_{i = 1}^n \sigma_i' \tilde \bu_i \tilde \bv_i^\top$ as follows:
\begin{align*}
    \widehat{\bQ}_{\lambda', m} = \sum_{i\in [n] \mid \sigma_i \ge \lambda'} \sigma_i' \tilde \bu_i \tilde \bv_i^\top, \quad \text{where} \quad \lambda' = 2C_1 \rho_n n \cdot \left ( \frac{\log(8/\delta)}{n} \right )^{\beta/2}  + 2C_2\rho_n^{\frac{\beta + 1}{2 \beta + 1}} n^{1/(2 \beta + 1)},
\end{align*}
where $C_1$ and $C_2$ are the constants from Theorem~\ref{theorem: Holder averaging}. Then, set
\begin{align*}
    \widehat{\bQ}^*_{\beta} = \begin{cases}
        \widehat{\bQ}_{\lambda', m}, & \text{ if } n^{2 \beta - 1} > \rho_n, \\
        \widehat{\bQ}_{\lambda}, & \text{ otherwise.}
    \end{cases}
\end{align*}
}

Next we prove an upper bound on the error and the rank of our estimator in the case of  $(H, \alpha)$-H\"older continuous graphons.

\begin{theorem}
\label{theorem: holder convergence rate}
Consider a graphon $W\in \Sigma (\alpha, H)$. Let $\beta = \min(\alpha, 1)$. Assume that 
    condition~\eqref{eq: assumption sqrt-rho} holds with $\delta/4$ in place of $\delta$ and $n \rho_n \ge \constantBandeira \log (2 n/\delta)$. Then, there exist constants $C$ and $C'$ depending on $\alpha, H$ such that for any $\delta \in (0,1]$, with probability at least $1 - \delta$, it holds that
	\begin{align*}
		i)&\ \opnorm{\rho_n \bbW - \bbW_{\bQhat^*_{\beta}}} \le \rho_n  C \cdot \left ( \frac{\log (8 /\delta)}{n} \right )^{\beta/2} + C \min \left \{\sqrt{\frac{\rho_n}{n}}, \; \left ( \frac{\rho_n^{\beta + 1}}{n^{2 \beta}} \right )^{1/(2 \beta + 1)} \right \}, \\
		ii)&\  \rank (\bQhat_{\beta}^*)  \le C' \cdot \begin{cases}
            \min \Big \{\frac{n^{\beta/(2 \alpha + 1)}}{\log^{\beta/(2 \alpha + 1)}(8/\delta)}, \; (n^2 \rho_n)^{2\beta (2 \beta + 1)^{-1} / (2\alpha + 1)} \Big \}, & \text{ if } n^{2 \beta - 1} > \rho_n, \\
            (n \rho_n)^{1/(2 \alpha + 1)}, & \text{ otherwise.}
        \end{cases}. 
	\end{align*}
\end{theorem}

The proof of this result can be found in Appendix \ref{appendix: proof of Holder convergence rate}.  

{
To see where the rate comes from, note that the block-averaging operator $S_m(\cdot)$ approximates an $(H,\alpha)$-H\"older graphon by an $s\times s$ step function (with $s\simeq n/m$), incurring an approximation error of order $\rho_n H s^{-\alpha}$, while the corresponding block-averaged noise contributes a spectral fluctuation of order $\sqrt{\rho_n}\,\sqrt{s}/n$ (up to logarithmic factors) after rescaling to the graphon operator norm.
Balancing these two terms yields $s\asymp (H^2 n^2\rho_n)^{1/(1+2\alpha)}$ and hence the rate $\big(H\rho_n^{1+\alpha}/n^{2\alpha}\big)^{1/(1+2\alpha)}$.
The matching minimax lower bound follows from a standard $s\times s$ bump-function packing of H\"older graphons together with a $\chi^2$ Fano-type testing argument: when $s^{1+2\alpha}\lesssim H^2 n^2\rho_n$, the hypotheses are separated in operator norm by $\asymp \rho_n H s^{-\alpha}$ yet remain statistically indistinguishable, forcing any estimator to incur error of this order, as the following result shows.
\begin{theorem}
\label{theorem: holder graphon estimation lower bound}
    Let $\Sigma(H, \alpha)$ be the class of $(H, \alpha)$-H\"older graphons for $\alpha \in (0; 1]$. If $H^2 n^2 \rho_n \ge 2^{20}$, $n \rho_n \ge H^{1/(2 \alpha)} / 6$ and $n \ge (1 + H^{1/\alpha})^2$, then
    \begin{align*}
        & \inf_{\widehat{\bbW}} \sup_{W \in \Sigma(H, \alpha)} \bbP_{W} \left (   \Vert \widehat{\bbW} - \rho_n \bbW \Vert_{\op} \ge \frac{ H \rho_n}{2^{14} n^{\alpha/2}} + \frac{1}{60} \left ( \frac{H \rho_n^{1 + \alpha}}{n^{2 \alpha}} \right )^{\frac{1}{(1 + 2 \alpha)}} \right ) \ge \frac{1}{6}.
    \end{align*}
\end{theorem}

The proof of this result can be found in Appendix~\ref{appendix: proof of holder graphon estimation lower bound}.
}

An important property of H\"older graphons is that they can be well approximated by piecewise polynomial functions of degree at most $\lfloor \alpha \rfloor $. More generally, we can consider a class of function that can be approximated  by polynomials with arbitrary precision. 

\begin{definition}
\label{condition: M-r analiticity}
    Given $M > 0$, $r > 0$,  we say that $W : [0,1]^2 \rightarrow [0,1]$ is $(M,r)$-analytic if there exist functions $c_k : [0, 1]^2 \to [-Mr^{-k}, Mr^{-k}]$ for each $k \in \bbN$ such that
    \begin{enumerate}
        \item $	W(x, y) = \sum_{k = 0}^{\infty} c_k(x_0, y) (x - x_0)^k $ for any $x_0, y$ and $x, |x - x_0| < r$, 
        \item $	W(x, y) = \sum_{k = 0}^{\infty} c_k(x, y_0) (y - y_0)^k $ for any $y_0, x$ and $y, |y - y_0| < r$.
    \end{enumerate}

\end{definition}

Roughly speaking, a graphon $W$ is $(M,r)$-analytic if it is analytic on $x$ uniformly with respect to $y$, its Taylor series has radius of convergence $r$, and the coefficients are bounded by $M$. Since the functions $c_k$ are bounded, $W$ satisfies the $(H, 1)$-H\"older condition for some $H$.\footnote{This is shown in Appendix \ref{appendix: proof of analytic convergence rate}.} Hence, we estimate $\bQ$ using the estimator $\widehat{\bQ}^*_{\beta}$ for $\beta = 1$. Using Theorem \ref{theorem: holder convergence rate}, we derive the following result. Its proof can be found in Appendix \ref{appendix: proof of analytic convergence rate}. 

\begin{theorem}
\label{theorem: analityc convergence rate}
Consider a graphon $W$ that is $(M,r)$-analytic. Assume that $n \rho_n \ge \constantBandeira \log (2 n/\delta)$. Let $\widehat{\bQ}_{1}^*$ be the estimator $\widehat{\bQ}^*_{\beta}$ for $\beta = 1$. Then, there are constants $C$ and $C'$ depending on $M$, $r$ such that for any $\delta \in (0,1]$, with probability at least $1 - \delta$, it holds that
    \begin{align*}
        i)&\ \opnorm{\rho_n \bbW - \bbW_{\widehat{\bQ}^*_{1}}}  \le \rho_n  C \cdot \left ( \frac{\log \frac{8}{\delta}}{n} \right )^{1/2} + C \left (\frac{\rho_n}{n} \right )^{2/3}, \\
        ii)&\ \rank (\widehat{\bQ}^*_{1})  \le C' \left (\log (n \rho_n) + \log \log \frac{8}{\delta} \right ).
    \end{align*}
\end{theorem}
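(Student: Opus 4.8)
\textbf{Proof plan for Theorem~\ref{theorem: analityc convergence rate}.}
The plan is to reduce everything to Theorem~\ref{theorem: holder convergence rate} by first showing that an $(M,r)$-analytic graphon $W$ is $(H,1)$-H\"older continuous for a suitable $H = H(M,r)$, and then to obtain the improved rank bound $ii)$ by a sharper analysis of the singular values of $\bQ$ that exploits the polynomial approximability of $W$. The first point (which is claimed in the footnote to be deferred to Appendix~\ref{appendix: proof of analytic convergence rate}) follows by differentiating the power series in Definition~\ref{condition: M-r analiticity} term by term: the bound $|c_k| \le M r^{-k}$ forces $|\partial_x W(x,y)| \le \sum_{k\ge 1} k M r^{-k} |x-x_0|^{k-1}$, which is uniformly bounded on, say, the region $|x-x_0| \le r/2$; covering $[0,1]$ by finitely many such intervals gives a global Lipschitz bound in $x$, and symmetrically in $y$, hence $W \in \Sigma(1, H)$. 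Plugging $\alpha = 1$, $\beta = 1$ into Theorem~\ref{theorem: holder convergence rate}~$i)$ immediately yields part $i)$ of the present theorem with the constant $C$ inherited (up to relabeling) from the H\"older bound.

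For part $ii)$, the H\"older theorem only gives $\rank(\bQhat_\lambda) \le C'(n\rho_n)^{1/(1+2\alpha)} = C'(n\rho_n)^{1/3}$, which is far weaker than the claimed $C'\log(n\rho_n)$, so a separate argument is needed. The key point is that an $(M,r)$-analytic function is, for every $x_0$, within $M r^{-m} \cdot (|x-x_0|/1)^{?}$... more precisely, truncating the Taylor series at degree $m-1$ leaves a tail $\sum_{k \ge m} c_k(x_0,y)(x-x_0)^k$ bounded in absolute value by $M \sum_{k\ge m} (|x-x_0|/r)^k \le \frac{M}{1 - |x-x_0|/r}(|x-x_0|/r)^m$. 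Partitioning $[0,1]$ into $N$ intervals of length $1/N$ with $1/N < r/2$ and approximating $W$ on each cell by its degree-$(m-1)$ Taylor polynomial in $x$ about the cell's left endpoint produces a function $\widetilde W$ that is piecewise (in the $x$-variable) polynomial of degree $m-1$, hence the associated matrix $\widetilde\bQ$ (sampled on the partition induced by $\xi_1 \le \dots \le \xi_n$) has rank at most $Nm$; and $\Vert W - \widetilde W\Vert_\infty \le 2M (2^{-1})^m = 2M\, 2^{-m}$. One then runs the same argument as in the proof of Theorem~\ref{theorem: holder convergence rate}: the hard-thresholding estimator with threshold $\lambda = 6\sqrt{n\rho_n}$ only retains singular values of $\bA$ exceeding $\lambda$, and by Weyl's inequality together with the matrix concentration bound (Proposition~\ref{theorem: matrix concentration}) and the rank bound on $\rho_n\widetilde\bQ$, all but $Nm$ of the singular values of $\bQ$ — and hence, up to the concentration error $4\sqrt{n\rho_n} + \sqrt{\constantBandeira\log(n/\delta)}$, of $\bA$ — are at most $n\rho_n \Vert W - \widetilde W\Vert_\infty \le 2Mn\rho_n 2^{-m}$. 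Choosing $m \asymp \log_2(n\rho_n)$ makes this quantity $\lesssim \sqrt{n\rho_n} \le \lambda/6$, so that $\rank(\bQhat_\lambda) \le Nm \asymp m \asymp \log(n\rho_n)$, where $N$ is the fixed constant $\lceil 2/r\rceil + 1$ absorbed into $C'$.

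The main obstacle is getting the bookkeeping in part $ii)$ exactly right: one must be careful that the "piecewise polynomial in $x$" approximation indeed caps the matrix rank by $Nm$ after the random sampling step (the partition points $\xi_{(i)}$ land in the deterministic cells, so each block of rows corresponding to one cell is a polynomial of degree $m-1$ evaluated at the sample points, contributing rank $\le m$), and that the threshold comparison $2Mn\rho_n 2^{-m} \le 2\sqrt{n\rho_n}$ is achieved by a choice of $m$ that is $\Theta(\log(n\rho_n))$ rather than larger; the assumption $n\rho_n \ge \constantBandeira\log(2n/\delta)$ is what guarantees the concentration error term is itself $O(\sqrt{n\rho_n})$ so that the effective threshold seen by $\bQ$ is still $\Theta(\sqrt{n\rho_n})$. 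Everything else is a direct specialization of the H\"older case, and the constants $C, C'$ depend on $M$ and $r$ only through $H(M,r)$ and $N = \lceil 2/r\rceil + 1$.
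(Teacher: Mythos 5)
Your proposal is correct and follows essentially the same route as the paper: part $i)$ is obtained by showing that $(M,r)$-analyticity implies an $(H,1)$-H\"older bound (the paper bounds the increment directly via the tail of the series with $H=\max(4Mr^{-1},2r^{-1})$, you via a uniformly bounded $x$-derivative, which is equivalent) and then invoking Theorem~\ref{theorem: holder convergence rate}, while part $ii)$ uses a piecewise truncated Taylor approximation of rank $O(m)$ with sup-norm error $O(2^{-m})$, the Eckart--Young/Weyl comparison of singular values, Proposition~\ref{proposition: singular value rank}, and the choice $m\asymp\log_2(n\rho_n)$. The only cosmetic difference is that you partition $[0,1]$ in the $x$-variable only (rank $\le Nm$), whereas the paper partitions both variables into $\ell\ge 2/r$ cells and expands about midpoints (rank $\le \ell^2 p$); both give the same $C'\log(n\rho_n)$ bound up to constants depending on $M,r$.
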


\begin{remark}
    Note that our estimator $\bQhat^*_\beta$ is not guaranteed to be symmetric or to take values in $[0,1]$. We can symmetrize it taking $\widetilde {\bQ}^*_\beta=(\bQhat^*_\beta+(\bQhat^*_\beta)^T)/2$. Then, $\rank (\widetilde {\bQ}^*_\beta)\leq 2 \rank (\bQhat^*_\beta)$ and, if $W$ is symmetric, using $W(x,y)=(W(x,y)+W(y,x))/2$, we get 
    $\opnorm{\rho_n W - W_{\widetilde {\bQ}^*_\beta}}\leq \opnorm{\rho_n W - W_{\bQhat^*_\beta}}$.
\end{remark}

{
We would like to highlight that the rates of convergence provided by Theorem~\ref{theorem: analityc convergence rate} are essentially optimal if $\rho_n \gtrsim n^{-1/2}$, as the following result shows.
\begin{theorem}
\label{theorem: analytic graphon estimation lower bound}
For a number $t \in [0; 1/2]$, define a function $\tau_t(x) = x + t x(1 - x)$. Consider graphons $W^{(0)}(x, y)  = (1 + xy)/2$
and $W^{(1)}(x, y) = W^{(0)}(\tau_t(x), \tau_t(y))$ for $t = (16 n)^{-1/2}$. Then, for any estimator $\widehat{\bbW}$, we have
\begin{align*}
    \max_{i \in \{0, 1\}} \bbP_{\bA \sim \rho_n W^{(i)}} \left ( \opnorm{\widehat{\bbW}(\bA) - \rho_n \bbW^{(i)}} \ge \frac{\rho_n}{56} \sqrt{\frac{1}{n}} \right ) \ge \frac{1}{6}.
\end{align*}
\end{theorem}

Clearly, both graphons $W^{(0)}$ and $W^{(1)}$ are $(M, r)$-analytic for some constants $M$ and $r$. The proof of this result can be found in Appendix~\ref{section: analytic graphon lower bound proof}.
}

Finally, note that
    Theorems \ref{theorem: holder convergence rate} and \ref{theorem: analityc convergence rate} can be easily extended to the case when $W$ is piecewise H\"older and piecewise analytic respectively.

\section{Back to Targeted Interventions}\label{section:simulations}
 In this section, we integrate our results to demonstrate how optimal interventions in the network \( \mathbf{A} \) can be estimated using a smaller network \( \mathbf{A}' \) sampled from the same underlying graphon. Notably, \( \mathbf{A}' \) can potentially be obtained by subsampling \( \mathbf{A} \).  
By combining the stability analysis from Theorem~\ref{theorem: graphon perturbed welfare problem} with the convergence results from Theorems~\ref{theorem: SBM convergence rate}, \ref{theorem: holder convergence rate}, and \ref{theorem: analityc convergence rate} , we obtain the following corollary. The proof is provided in Appendix~\ref{section: convergence rate targeted interventions}.
\begin{corollary}
\label{corollary: convergene rate targeted interventions}
   Fix a positive integer $N$. Consider an LQ network game $G(\bA,\btheta)$ with the budget $B \cdot N$ and the peer-effect parameter $\gamma$, $\gamma \Vert \bA \Vert_{\op} < N$, where $\bA$ is a network of size $N$ sampled from a sparse graphon $\rho_N W$.
    Let $T_{(\bA,\btheta)}(\cdot)$ be the objective function of the social welfare problem \eqref{eq: graphon social welfare problem} associated with $G(\bA,\btheta)$. Fix $n$, and let a network $\bA'$ of size $n$ be sampled from the graphon $\rho_n W$ with the matrix of connection probabilities $\bQ'$. Let $\widehat{\bQ}'$ be a symmetric estimator of the matrix of connection probabilities $\bQ'$, and let $\widehat{W} = W_{\widehat{\bQ}'}$ be an estimator of $\rho_n W$ based on $\bA'$.
    
    Assume that $\gamma \Vert \bA' \Vert_{\op} < \rho_N / \rho_n \cdot n$ and $\gamma \Vert \widehat{\bQ}' \Vert_{\op} \le \rho_N / \rho_n \cdot n$.
    Let $\hat{t}' \in \operatorname{Im} \bbW_{\widehat{\bQ}'} + \operatorname{span}(\tilde{\theta})$ be the optimal solution\footnote{One can assume that $\hat{t}' \in \operatorname{Im} \bbW_{\widehat{\bQ}'} + \operatorname{span}(\tilde{\theta})$, since $\bbW_{\widehat{\bQ}'}$ is self-adjoint due to symmetry of $\widehat{\bQ}'$, and so $\operatorname{Im} \bbW_{\widehat{\bQ}'} = \operatorname{Ker}^{\perp} \bbW_{\widehat{\bQ}'}$} of the social welfare problem associated with graphon game $G(\rho_N / \rho_n \cdot \widehat{W},\tilde \theta)$ with
    \begin{align*}
        \tilde \theta(x) = \sum_{i = 1}^N \btheta_i \cdot \indicator{x \in [(i-1)/N, i/N)}
    \end{align*}
    and budget $B$.
    Define a vector $\widehat{\btheta}' \in \bbR^N$ as
    \begin{align*} 
        \widehat{\btheta}'_i = N \cdot \int_{(i - 1)/N}^{i/N} \hat{t}'(x) dx.
    \end{align*}
    Let $\widehat{\btheta}$ be the optimal solution of the network game $G(\bA,\btheta)$. Define 
    \begin{align*}
        \ttb & = \sqrt{\frac{2B}{N}} \cdot \frac{(\Vert \tilde{\theta} \Vert_{L_2} + \sqrt{B})}{(1 - \gamma \Vert \bA \Vert_{\op} / N)^2},\\
        \tau & = 4 \gamma \cdot (1 - \gamma [\Vert \bbW_{\bA} \Vert_{\op} \vee \Vert \rho_N / \rho_n \bbW_{\widehat{\bQ}'} \Vert_{\op}])^{-3} 
    \end{align*}
    and fix $\delta \in (0, 1)$. Suppose that $\rho_N N \ge \constantBandeira \log \frac{6 N }{\delta}$. Then, with probability at least $1 - \delta$, 
    \begin{enumerate}
        \item if $W$ is an SBM with $k$ communities and $\widehat{\bQ}' = (\widehat{\bQ}_{\lambda} + \widehat{\bQ}_\lambda^T)/2$ for the hard thresholding estimator applied to $\bA'$, we have
        \begin{align*}
             T_{(\bA, \btheta)}(\hat{\btheta}) - T_{(\bA, \btheta)}(\hat{\btheta}') \le \ttb + 24\tau \cdot \left (\sqrt{\frac{\rho_N}{n \rho_n}}+  \rho_N \left ( \frac{k \sum_{a = 1}^{k - 1}e_a}{n} \log \left ( \frac{12k}{\delta} \right) \right )^{1/4} \right )
        \end{align*}
        and $\rank(\bbW_{\widehat{\bQ}'}) \le 2k$, assuming conditions of Theorem~\ref{theorem: SBM convergence rate} for $\bA'$ and $\delta/3$ in place of $\delta$;
        \item if $W$ is an $(H, \alpha)$-H\"older graphon and $\widehat{\bQ}' = (\widehat{\bQ}^*_\beta + (\widehat{\bQ}^*_{\beta})^T)/2$ for the estimator $\widehat{\bQ}^*_\beta$ of ${\bQ}'$ based on $\bA'$, we have
        \begin{align*}
            & T_{(\bA, \btheta)}(\hat{\btheta}) - T_{(\bA, \btheta)}(\hat{\btheta}') \le \ttb  \\
    & \qquad \qquad + \tau \cdot \left ( 8 \sqrt{\frac{\rho_N}{N}} + C \rho_N \left ( \frac{\log (24 /\delta)}{n} \right )^{\beta/2} + C \min \left \{\sqrt{\frac{\rho_N}{n \rho_n}}, \; \frac{\rho_N}{\rho_n}\left ( \frac{\rho_n^{\beta + 1}}{n^{2 \beta}} \right )^{1/(2 \beta + 1)} \right \} \right )
        \end{align*}
        and
        \begin{align*}
            \rank(\widehat{\bbW}_{\widehat{\bQ}'}) \le C' \cdot \begin{cases}
            \min \Big \{\frac{n^{\beta/(2 \alpha + 1)}}{\log^{\beta/(2 \alpha + 1)}(24/\delta)}, \; (n^2 \rho_n)^{2\beta (2 \beta + 1)^{-1} / (2\alpha + 1)} \Big \}, & \text{ if } n^{2 \beta - 1} > \rho_n, \\
            (n \rho_n)^{1/(2 \alpha + 1)}, & \text{ otherwise.}
        \end{cases},
        \end{align*}
    provided the conditions of Theorem~\ref{theorem: holder convergence rate} hold for $\bA'$ and $\delta/3$ in place of $\delta$;
    \item if $W$ is $(M, r)$-analytic and $\widehat{\bQ}' = (\widehat{\bQ}^*_1 + (\widehat{\bQ}^*_1)^T)/2$ for the estimator $\widehat{\bQ}_1^*$ of $\bQ'$ based on $\bA'$, we have
    \begin{align*}
        T_{(\bA, \btheta)}(\hat{\btheta}) - T_{(\bA, \btheta)}(\hat{\btheta}') \le \ttb  + \tau \cdot \left ( 8 \sqrt{\frac{\rho_N}{N}} + C \rho_N \left ( \frac{\log (24 /\delta)}{n} \right )^{1/2} + C \frac{\rho_N}{n^{2/3} \rho_n^{1/3}} \right ),
    \end{align*}
    and $\rank(\widehat{\bbW}_{\widehat{\bQ}'}) \le C_2(M, r) (\log (n \rho_n) + \log \log \frac{24}{\delta})$, provided the conditions of Theorem~\ref{theorem: analityc convergence rate} hold for $\bA'$ and $\delta/3$ in place of $\delta$.
    \end{enumerate}

 \end{corollary}

 The quantity $\ttb$ in the statement of Corollary~\ref{corollary: convergene rate targeted interventions} is due to the fact that $N$ may not be divisible by $n$. Otherwise, this term disappears. 

 {
 \subsection{Lower bound for targeted interventions}
 \label{section: regret lower bounds}

While targeted interventions problem for LQR games naturally motivates us to study graphon estimation in the spectral norm, it is an interesting problem how can one  maximize the social welfare $T_{(\rho_n W, \theta)}(\cdot)$ of an unobserved sparse graphon $\rho_n W(\cdot, \cdot)$ in general based on an observed network $\bA$. In other words, setting
\begin{align*}
    \theta^* \in \argmax_{\tilde{\theta} \in L_2[0,1], \|\tilde{\theta}\|_{L_2}^2 \le B} T_{(\rho_n W,\theta)}(\tilde{\theta}), 
\end{align*}
we are interested in minimizing the regret of an intervention $\tilde{\theta}$ defined as
\begin{align*}
    \mathrm{Regret}(\tilde{\theta}) = T_{(\rho_n W,\theta)}(\theta^*) - T_{(\rho_n W,\theta)}(\tilde{\theta}).
\end{align*}
Our paper suggests constructing an intervention $\widehat{\theta}$ based on an estimator $W_{\widehat{\bQ}}$ for some estimator $\widehat{\bQ}$ of the matrix $\bQ$ of connection probabilities, where $W_{\widehat{\bQ}}$ aims to estimate the sparse graphon $\rho_n W$ in the spectral norm. The upper bound on the regret that follows from Theorem~\ref{theorem: graphon perturbed welfare problem} is of the form
\begin{align*}
    \regret(\widehat{\theta}) \le C \cdot  \Vert \rho_n \bbW - \bbW_{\widehat{\bQ}} \Vert_{\op}
\end{align*}
for some constant $C$ depending on $\bbW, \rho_n, B$ and $\theta(x)$. 

In a special case when $\theta(x) \equiv 0$ and $\bbW$ has a spectral gap, this upper bound can be improved through the Davis--Kahan theorem, which leads to the following result.

\begin{lemma}
\label{lemma: regret upper bound spectral gap}
Suppose that $\theta(x) \equiv 0$ and $\bbW_1$ has a spectral gap $\ttg = \lambda_1(\bbW_1) - \lambda_2(\bbW_1) > 0$. Let $T_1(\cdot)$ be the welfare function associated to the graphon $ W_1$, initial heterogeneities $\theta$ and budget $B$, and let $\hat{\theta}_1$ be its maximizer. Let $\hat{\theta}_2$ be the optimal solution of the social welfare problem associated with graphon game $G( W_{2}, \theta)$ with the same initial heterogeneity parameter and budget $B$ for some other graphon $W_2$. Then, we have
\begin{align*}
    T_1(\hat{\theta}_2) \ge T_1(\hat{\theta}_1) - \frac{8 \gamma \sqrt{B} (\Vert \theta \Vert_{L_2} + 2 \sqrt{B})}{(1 - \gamma [\Vert \bbW_1 \Vert_{\op} \vee \Vert \bbW_  2 \Vert_{\op}])^3} \cdot \frac{\Vert \bbW_1 - \bbW_2 \Vert_{\op}^2}{\ttg}.
\end{align*}
\end{lemma}

Note that the above bound is better than one provided by Theorem~\ref{theorem: graphon perturbed welfare problem} only if $\ttg \gtrsim \Vert \bbW_1 - \bbW_2 \Vert_{\op}$. In what follows, we refer to the latter inequality as the spectral gap assumption. Since the spectral gap of a sparse graphon $\rho_n W$ scales linearly in $\rho_n$, the spectral gap assumption implies the following upper bound on the regret of $\widehat{\theta}$:
\begin{align}
    \regret(\widehat{\theta}) \le C' \cdot  \frac{\Vert \rho_n \bbW - \bbW_{\widehat{\bQ}} \Vert_{\op}^2}{\rho_n}, \label{eq: regret squared bound using spectral gap}
\end{align}
for which we are able to find a matching minimax lower bound, see Theorem~\ref{theorem: spectral norms deficit} below.

It is interesting to generalize this result beyond the case $\theta(x) \equiv 0$. It would be enough to derive a Davis--Kahan type result for the generalized eigenvalue problem of the form (see~\citep{gander1989constrained} and Appendix~\ref{section: computational aspects}):
\begin{align*}
    &\text{Find maximal } \lambda \in \bbR \text{ and a corresponding function $u(\cdot)$ such that } \\
    & \qquad  \bbA[u](x) = \lambda u(x) + z(x), \\
    & \qquad \Vert u \Vert_{L_2}^2 = B, 
\end{align*}
when $z(\cdot)$ and bounded self-adjoint operator $\bbA$ are perturbed. Unfortunately, we are not aware of any result of this type in the literature.

We will construct information-theoretic lower bounds for the regret using the following theorem, which relates regret and the deficit of a triangle inequality for some spectral norms. The proof of the theorem is given in Appendix~\ref{appendix: proof of deficit lower bound theorem}.

\begin{theorem}
\label{theorem: spectral norms deficit}
Suppose that for graphons $W_0, W_1$ there exists a number $s > 0$ such that we have
\begin{align}
\label{eq: deficit in resolvents}
    \Vert (\bI - \gamma \rho_n \bbW_0)^{-2} \Vert_{\op} +  \Vert (\bI - \gamma \rho_n \bbW_1)^{-2} \Vert_{\op} \ge s + \Vert (\bI - \gamma \rho_n \bbW_0)^{-2} + (\bI - \gamma \rho_n \bbW_1)^{-2}\Vert_{\op}.
\end{align}
Let $\regret_j(\cdot)$ be the regret in the graphon social welfare problem~\eqref{eq: graphon social welfare problem} with initial heterogeneity parameter $\theta(x) \equiv 0$, budget $B$ and sparse graphon $\rho_n W_j$, $j = 0, 1$. Let $\bbP_j$ be the distribution of $\bA$ generated by the sparse graphon $\rho_n W_j$, $j = 0, 1$. If $\KL{\bbP_1}{\bbP_0} \le \alpha$ 
for some $\alpha > 0$, then
\begin{align*}
    \inf_{\widehat{\theta}} \max_{j = 0, 1} \bbP_j \left ( \regret_j(\widehat{\theta}) \ge B s/2 \right ) \ge \max \left \{ e^{-\alpha}/4, \frac{1 - \sqrt{\alpha}}{2} \right  \}.
\end{align*}
\end{theorem}

 However, it may be hard to deal with condition~\eqref{eq: deficit in resolvents}. Instead, one may use the following simple proposition, which proof is given in Section~\ref{section: graphons deficit power lower bound proof}.

\begin{prop}
\label{proposition: graphons power upper bound}
Suppose that symmetric graphons $W_0, W_1:[0, 1] \to [0, 1]^2$ satisfies
\begin{align*}
    s' = (k_0 + 1) \left ( \lambda_{\max}(\bbW_0)^{k_0} + \lambda_{\max}(\bbW_1 )^{k_0}  - \Vert \bbW_0^{k_0} + \bbW_1^{k_0} \Vert_{\op}\right )
\end{align*}
for some $s' > 0$ and $k_0$,
and $\rho_n \gamma \max\{\Vert \bbW_0 \Vert_{\op}, \Vert \bbW_1 \Vert_{\op} \} < 1$. Then condition~\eqref{eq: deficit in resolvents} holds for $s = (\gamma \rho_n)^{k_0} s'$.
\end{prop}

To see, how Proposition~\ref{proposition: graphons power upper bound} can be applied, suppose that $\bbW_0$ has simple leading positive eigenvalue and $\bbW_0, \bbW_1$ are self-adjoint with finite rank. Then, if $\bbW_0$, $\bbW_1$ have finite rank, we can identify them with finite-dimensional matrices $\ttW_1, \ttW_0$. Setting $\Delta = \ttW_1 - \ttW_0$ and denoting the largest eigenvalue of $\ttW_0$ by $\lambda_0$ and its corresponding eigenvector by $\bu_0$, we get for $k_0 = 1$ in Proposition~\ref{proposition: graphons power upper bound} that
\begin{align*}
    \lambda_{\max}(\bbW_0) + \lambda_{\max}(\bbW_1 )  - \Vert \bbW_0+ \bbW_1 \Vert_{\op} & = 
    \Vert \ttW_0 + \Delta \Vert_{\op} + \Vert \ttW_0 \Vert_{\op} - \Vert 2 \ttW_0 + \Delta \Vert_{\op} \\
    & = \frac{1}{2} \bu_0^\top \Delta (\lambda_0 \bI - \ttW_0)^{+} \Delta \bu_0 + o(\Vert \Delta \Vert_{\op}^2),
\end{align*}
as $\Vert \Delta \Vert_{\op} \to 0$, where $(\cdot)^+$ stands for the Moore--Penrose inverse (see Theorems 7 and 10 \newline from \citep{magnus2019matrix} for the first and second matrix derivatives of eigenvalues). This quadratic behavior is precisely what we need to match the upper bound~\eqref{eq: regret squared bound using spectral gap}! We illustrate this approach for the case of smooth graphons.

\begin{theorem}
\label{theorem: regret lower bound smooth graphons}
Let $W^{(0)}$ and $W^{(1)}$ be graphons from Theorem~\ref{theorem: analytic graphon estimation lower bound}. Let $\regret_i(\cdot)$ be the regret associated with the social welfare problem with budget $B$, initial heterogeneities $\theta(x) \equiv 0$ and graphon $W^{(i)}$. Then there exists an absolute constant $C$ such that for any $n \ge C$ we have
\begin{align*}
     \inf_{\widehat{\theta}} \max_{i = 0, 1} \bbP_{\bA \sim \rho_n W^{(i)}} \left ( \regret_i(\widehat{\theta}) \ge \frac{B\rho_n}{4 \cdot 10^4 \cdot n} \right ) \ge \frac{1}{6}
\end{align*}
and both $W^{(0)}$ and $W^{(1)}$ have the spectral gap at least $\sqrt{13}/6 - 1/3$.
\end{theorem}

Due to Theorem~\ref{theorem: analityc convergence rate} and upper bound~\eqref{eq: regret squared bound using spectral gap}, this lower bound is tight for initial heterogeneities $\theta(x) \equiv 0$ under the spectral gap assumption, provided $\rho_n \gtrsim n^{-1/2}$. However, we postpone tightening the lower bound on the regret in the absence of the spectral gap for future work.
 }

 \subsection{Numerical experiments}

 We start by illustrating our theory by the following example. Consider a graphon $W_1$ defined as follows:
 \begin{align*}
     W_1(x, y) = \sqrt{|x - y|}.
 \end{align*}
 This graphon is $(1, 1/2)$-H\"older, since for any $x_1, x_2, y_1, y_2$, we have
 \begin{align*}
     W_1(x_1, y_1) - W_1(x_2, y_2) & = \sqrt{|x_1 - y_1|} - \sqrt{|x_2 - y_2|} \\
     & \le \sqrt{|x_2 - y_2| + |x_1 - x_2| + |y_1 - y_2|} - \sqrt{|x_2 - y_2|} \\
     & \le |x_1 - x_2|^{1/2} + |y_1 - y_2|^{1/2},
 \end{align*}
 where we used the triangle inequality and the fact that $\sqrt{a + b} \le \sqrt{a} + \sqrt{b}$ for any non-negative $a, b$. For each $n \in \{20, 120, \ldots, 4920\}$, we sample a network $\bA$ and heterogeneity parameters $\btheta$ as follows:
 \begin{align}
 \label{eq: experiment sample scheme}
 \begin{cases}
     \bA_{ij} & = Bern(\rho_n W(\xi_{(i)}, \xi_{(j)})), \quad i < j,\\
     \btheta_i & = \xi_{(i)}^2,
 \end{cases}
 \end{align}
 where $\rho_n = n^{-0.25}$ and $\xi_1, \ldots,\xi_n$ are samples from the uniform distribution on $[0, 1]$. For each $\bA$, we compute optimal interventions $\widehat{\btheta}$ with budget $B = n / 2$ and the peer-effect parameter $\gamma = 0.8$ using the representation~\eqref{eq: hat theta equation} and the algorithm suggested in~\citep{gander1989constrained}. Then, we compute optimal interventions $\widehat{\btheta}'$ for the same $\btheta$ and the empirical graphon of $\widehat{\bQ}_{1/2}^*$ for the hard thresholding parameter
 \begin{align*}
     \lambda = \rho_n n^{3/4} + \rho_n^{3/4} n^{1/2}
 \end{align*}
 and the averaging parameter $m$ defined by Theorem~\ref{theorem: Holder averaging}. 
We also vary $\lambda$ over a power grid $\lambda \cdot 2^{i}, i = -2, \ldots, 2$, to see how the change of $\lambda$ influences the approximation error $T(\widehat{\btheta}) - T(\widehat{\btheta}')$.
 
In the first picture of the first row of Figure~\ref{fig:experiments}, we plot the dependence of the difference between target functions $T(\widehat{\btheta})$ and $T(\widehat{\btheta}')$ computed for interventions on the network $\bA$. In the second picture of the first row of Figure~\ref{fig:experiments}, we plot the rank of $\widehat{\bQ}^*_{1/2}$ for different values of $\lambda$. Finally, in the third picture of the first row of Figure~\ref{fig:experiments}, we plot the CPU time necessary to compute SVD of $\cS_m(\bA)$, truncated at the computed rank. The truncated SVD was computed using the LOBPCG algorithm~\citep{knyazev2001toward}. As one can see, the network $\bA$ can be efficiently approximated by a low-rank matrix, despite the fact that the underlying graphon is not low-rank. Black dashed line in the rightmost picture of the first row of Figure~\ref{fig:experiments} stands for the CPU time required to obtain optimal interventions for the network $\bA$. Note that small hard thresholding parameter $\lambda$ results in a moderate number of noisy SVD components in the estimator $\widehat{\bQ}_{1/2}^*$ for small values of $n$, while large $\lambda$ imposes significant bias, resulting in a suboptimal approximation of the interventions of the initial network. All curves are averaged over 30 trials.

 Then, we consider the case when a network is sampled from an SBM graphon with $K = 4$ communities, i.e.
 \begin{align*}
     W_2(x, y) = \bB_{\lceil 4x \rceil, \lceil 4y \rceil}
 \end{align*}
 and $\bB$ is a symmetric matrix which entries are generated as follows. Let $\{u_{ij}\}_{1 \le i \le j \le K}$ be independent samples from the uniform distribution on $[0, 1]$, then
 \begin{align*}
     \bB_{ij} = \begin{cases}
         u_{ij} / 2, \text{ if } i < j, \\
         (1 + u_{ij}) / 2, \text{ if } i = j.
     \end{cases}
 \end{align*}
 We generated the matrix $\bB$ once, and keep it fixed during all experiments. We sample 100 different pairs of a network $\bA$  and a heterogeneity vector $\btheta$ using the procedure~\eqref{eq: experiment sample scheme} for each $n \in \{20, 40, \ldots, 980\}$ . For each pair, we compute the optimal interventions $\widehat{\btheta}$ with budget $n/2$ and the peer-effect parameter $\gamma = 0.8$ as before. Then, we compute an estimator $\widehat{\btheta}'$ of optimal interventions based on truncated SVD with $4$ components and the true graphon $W_2$. We plot the difference between the target functions for the optimal intervention vector and its estimator in Figure~\ref{fig:experiments}, middle row. As can be seen, truncated SVD outperforms graphon-based interventions. In the second picture of the middle row, we reported CPU time necessary to compute optimal interventions (see ``initial network'' label) and interventions based on either the truncated SVD estimator or the graphon $W_2$. All curves are averaged over 100 trials.

 Finally, we run the following experiment. We generate a network $\bA$ and the heterogeneity vector $\btheta$ of size $N = 10000$ from $W_2$. Then, for each $n \in \{100, 200, \ldots, 1500\}$, we generate a network $\bA'$ of size $n$, construct an estimator $\widehat{W}$ of the sparse graphon $\rho_n W_2$ based on the truncated SVD, and estimate optimal interventions in $\bA$ using $\widehat{W}$. We estimate the ratio of sparsities $\rho_N / \rho_n$ using the ratio between average degrees. We plot the result in Figure~\ref{fig:experiments}, bottom row, left. The dashed line corresponds to interventions based on the true graphon $W_2$. We average the result over $100$ generations of matrices $\bA'$ for each $n$. We also reported CPU time in the right picture of the bottom row of Figure~\ref{fig:experiments}.

 \begin{figure}[h!]
     \centering
     \includegraphics[width=\linewidth]{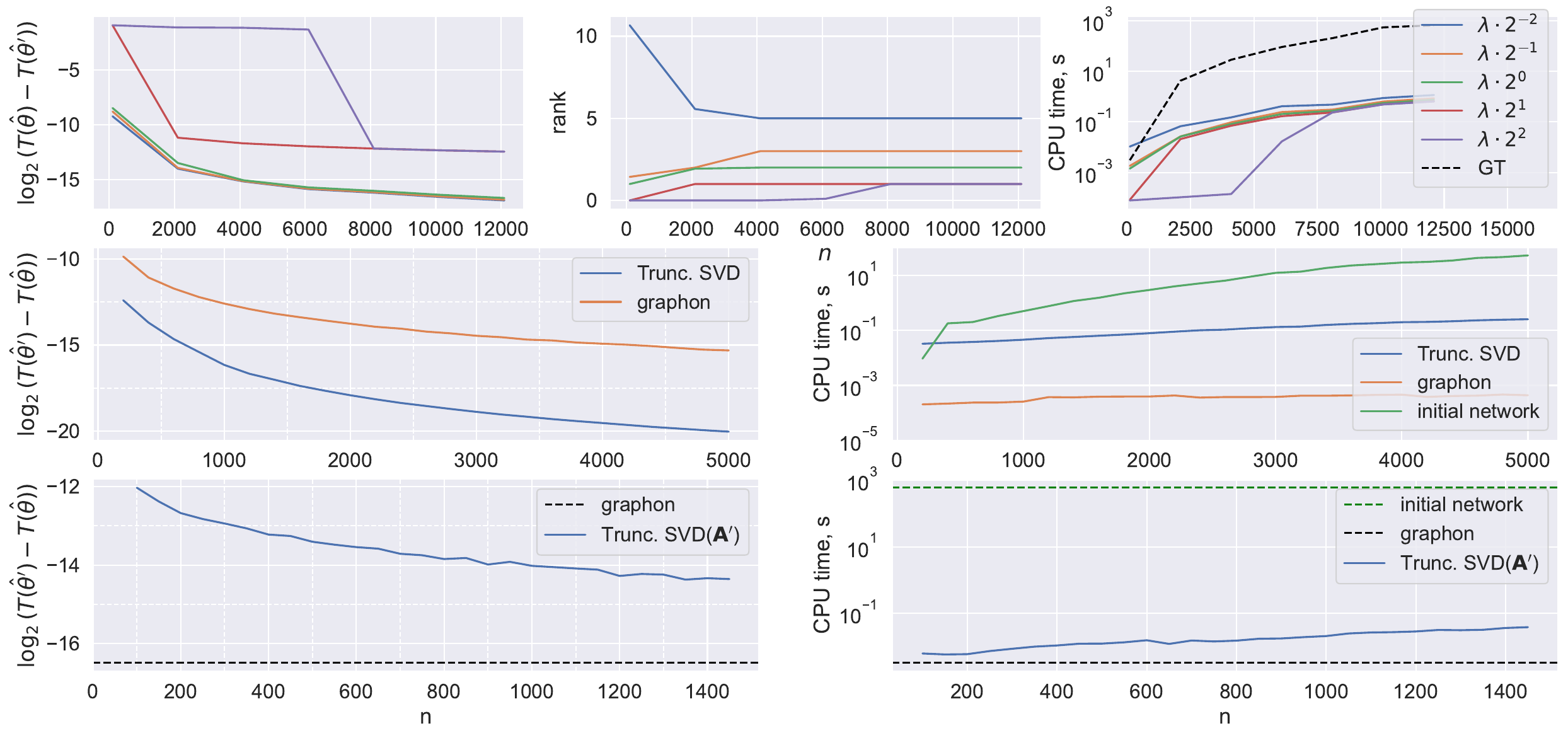}
     \caption{\textit{Upper Left:} difference between target functions for optimal interventions of a network on $n$ vertices sampled from $(1, 1/2)$-H\"older graphon $W_1(x, y) = \sqrt{|x - y|}$ and interventions based on estimator $\widehat{\bQ}^*_{1/2}$. \textit{First Row, Middle:} Rank of the estimator $\widehat{\bQ}^*_{1/2}$ for a network on $n$ vertices sampled from $W_1(x, y)$. \textit{First Row, Right:} CPU time required to obtain optimal interventions (black dashed line) and interventions using truncated SVD of $\cS_m(\bA)$. \textit{Middle Row, Left:} Difference between target functions for optimal interventions of a network on $n$ vertices sampled from SBM with $4$ communities and interventions computed using a) graphon b) hard-thresholding estimator. \textit{Middle Row, Right:} CPU time that is required to compute interventions based on a) initial network b) graphon c) hard-thresholding estimator. \textit{Bottom Row, Left:} The difference between target functions for optimal interventions of a network $\bA$ of size 10000 sampled from SBM with $4$ communities and interventions based on a) graphon b) hard-thresholding estimator computed for other network $\bA'$ of size $n$. \textit{Bottom Row, Right:} CPU time required to compute interventions based on a) initial network b) graphon c) hard-thresholding estimator.}
     \label{fig:experiments}
 \end{figure}

 \section{Conclusion}
\label{section: conclusion}

Driven by applications in graphon games, we developed an estimator for an unknown graphon based on a sampled network. Under standard regularity conditions for the graphon, our estimator possesses two key features: (i) it approximates the unknown graphon in the operator norm, and (ii) it is of low rank. We established upper bounds for the convergence rate and rank of the estimator. Furthermore, we demonstrated that the estimator yields near-optimal solutions for the social welfare problem in linear-quadratic graphon games, with efficient computation. We also quantified the convergence rates of the associated costs and the computational complexity relative to the network size.

\bibliographystyle{apalike}
\bibliography{refs.bib}
\newpage


\appendix

\section{Proof of Theorem~\ref{theorem: graphon perturbed welfare problem}}\label {proof_graphon_perturbed_welfare_problem}

\noindent \textbf{Step 1. Reduction to the difference of resolvents.} The problem of optimal interventions in graphon can be restated as follows:
\begin{align*}
    \max_{\hat{\theta}} T(\hat{\theta}) = \frac{1}{2} \left \Vert  \left ( \bI - \gamma \bbW \right )^{-1} (\theta + \hat \theta) \right \Vert_{L_2}^2 \\
    \text{subject to } \Vert \hat \theta\Vert^2_{L_2} \le B,
\end{align*}
see the proof of Proposition 1 in~\citep{Parise_Ozdaglar_Econometrica}. From the self-adjointness of the operator $\bbW$, we have
\begin{align*}
    T_i(\hat{\theta}) = \frac{1}{2} \left \langle (\bI - \gamma \bbW_i)^{-2} [\theta + \hat{\theta}], \theta + \hat \theta \right \rangle, \quad i = 1, 2.
\end{align*}

Let $\hat{\theta}_i$ be the argument of the maximum of $T_i(\hat{\theta})$. Then, we have
\begin{align}
    T_{1}(\hat{\theta}_2) & = T_1(\hat{\theta}_1) + T_{1}(\hat{\theta}_2) - T_{1}(\hat{\theta}_1) = T_1(\hat{\theta}_1) + T_1(\hat{\theta}_2) - T_2(\hat{\theta}_2) + T_2(\hat{\theta}_2) - T_1(\hat{\theta}_1) \nonumber \\
    & \ge  T_1(\hat{\theta}_1) + T_1(\hat{\theta}_2) - T_2(\hat{\theta}_2) + T_2(\hat{\theta}_1) - T_1(\hat{\theta}_1), \label{eq: lower bound on T with approximate theta}
\end{align}
where we used $T_2(\hat{\theta}_2) \ge T_2(\hat{\theta}_1)$ by the definition of the argument of the maximum in the last inequality. For brevity, we denote the resolvent by
\begin{align*}
    \cR(\gamma, \bbW) = (\bbI - \gamma \bbW)^{-1},
\end{align*}
so
\begin{align*}
    T_1(\widehat{\theta}_2) - T_2(\widehat{\theta}_2) & = \frac{1}{2} \left \langle (\cR^2(\gamma, \bbW_1) - \cR^2(\gamma, \bbW_2)) [\theta + \hat{\theta}_1], \theta + \hat{\theta}_2 \right \rangle \\
    & \ge -  \frac{1}{2} \Vert \cR^2(\gamma, \bbW_1) - \cR^2(\gamma, \bbW_2) \Vert_{\op} \Vert \theta + \theta_2 \Vert_{L_2}^2 \\
    & \ge -  \Vert \cR^2(\gamma, \bbW_1) - \cR^2(\gamma, \bbW_2) \Vert_{\op} (\Vert \theta \Vert^2_{L_2} + B).
\end{align*}
Similarly, we have
\begin{align*}
    T_2(\hat{\theta}_1) - T_1(\hat{\theta}_1) \ge - \Vert \cR^2(\gamma, \bbW_1) - \cR^2(\gamma, \bbW_2) \Vert_{\op} (\Vert \theta \Vert^2_{L_2} + B).
\end{align*}
Combining the above with~\eqref{eq: lower bound on T with approximate theta}, we obtain
\begin{align}
\label{eq: T stability throw resolvents difference}
    T_1(\hat{\theta}_2) \ge T_1(\hat{\theta}_1) - 2 \Vert \cR^2(\gamma, \bbW_1) - \cR^2(\gamma, \bbW_2) \Vert_{\op} \cdot (\Vert \theta \Vert_{L_2}^2 + B).
\end{align}

\noindent \textbf{Step 2. Analysis of squared resolvents.} To bound the operator norm of the difference between $\cR^2(\gamma, \bbW_i)$, we use decomposition
\begin{align*}
	\cR^2(\gamma, \bbW) = \sum_{i = 0}^\infty (i + 1) \gamma^i \bbW^i.
\end{align*}
The right-hand side converges because $\gamma \Vert \bbW_i \Vert <1$ for $i = 1, 2$.
Hence, we have
\begin{align*}
	\cR^2(\gamma, \bbW_1) - \cR^2(\gamma, \bbW_2) = \sum_{i = 1}^\infty (i + 1) \gamma^i (\bbW_1^i - \bbW_2^i).
\end{align*}
Finally, we bound $\Vert \bbW_1^i - \bbW_2^i \Vert_{\op}$ by induction on $i$ as follows:
\begin{align*}
	\Vert \bbW_1^i - \bbW_2^i \Vert_{\op} \le i \cdot \Vert \bbW_1 - \bbW_2 \Vert_{\op} \cdot (\Vert \bbW_1 \Vert_{\op} \vee \Vert \bbW_2 \Vert_{\op})^{i - 1}.
\end{align*}
The base $i = 1$ is trivial. Next, we obtain 
\begin{align*}
	{\Vert \bbW_1^i - \bbW_2^i \Vert}_{\op} & = \Vert \bbW^{i - 1}_1 (\bbW_1 - \bbW_2) + \bbW_2 (\bbW^{i - 1}_1 - \bbW_2^{i - 1}) \Vert_{\op} \\
	& \le \Vert \bbW_1 \Vert_{\op}^{i - 1}  \Vert \bbW_1 - \bbW_2 \Vert_{\op} + \Vert \bbW_2 \Vert_{\op} \Vert \bbW^{i - 1}_1 - \bbW_2^{i - 1} \Vert_{\op} \\
	& \overset{\text{I.H.}}{\le} \Vert \bbW_1 \Vert_{\op}^{i - 1} \Vert \bbW_1 - \bbW_2 \Vert_{\op} + (i - 1) \Vert \bbW_2 \Vert_{\op} \Vert \bbW_1 - \bbW_2 \Vert_{\op} (\Vert \bbW_1 \Vert_{\op} \vee \Vert \bbW_2 \Vert_{\op})^{i - 2} \\
	& \le i \cdot \Vert \bbW_1 - \bbW_2 \Vert_{\op} \cdot (\Vert \bbW_1 \Vert_{\op} \vee \Vert \bbW_2 \Vert_{\op})^{i - 1}.
\end{align*}
Hence, we have
\begin{align}
	\Vert \cR^2(\gamma, \bbW_1) - \cR^2(\gamma, \bbW_2) \Vert_{\op} & \le \Vert \bbW_1 - \bbW_2 \Vert_{\op} \sum_{i = 0}^{\infty} (i + 1)(i + 2) \gamma^{i + 1} (\Vert \bbW_1 \Vert_{\op} \vee \Vert \bbW_2 \Vert_{\op})^{i} \nonumber \\
	& = \frac{2 \gamma \Vert \bbW_1 - \bbW_2 \Vert_{\op}}{(1 - \gamma [\Vert \bbW_1 \Vert_{\op} \vee \Vert \bbW_2 \Vert_{\op}])^3}. \label{eq: resolvent different upper bound}
\end{align}
Then, the above, together with~\eqref{eq: T stability throw resolvents difference}, implies
\begin{align*}
    T_1(\hat{\theta}_2) \ge T_1(\hat{\theta}_1) - \frac{4 \gamma \Vert \bbW_1 - \bbW_2 \Vert_{\op} (\Vert \theta \Vert_{L_2}^2 + B)}{(1 - \gamma [\Vert \bbW_1 \Vert_{\op} \vee \Vert \bbW_2 \Vert_{\op}])^3},
\end{align*}
which completes the proof.

\section{Proof of Theorem~\ref{theorem: UHTE performance}}
\label{appendix: proof of HTE estimator error}

Define $\tilde{\bQ} = \bQ - \bD_{\bQ}$, where $\bD_{\bQ}$ consists of diagonal entries of $\bQ$. Now, $\Vert \bD_{\bQ} \Vert_{\op} = \max_i |(\bD_{\bQ})_{ii}| = \max_i \rho_n |W(\xi_i, \xi_j)| \le \rho_n$. 
We apply the triangle inequality, and derive
\begin{align*}
	\Vert \bQ - \widehat{\bQ}_\lambda \Vert_{\op} \le \Vert \bQ - \bA \Vert_{\op}+\Vert  \bA - \widehat{\bQ}_\lambda\Vert_{\op} =  \Vert \bD_{\bQ} \Vert_{\op}+ \Vert \tilde{\bQ}  - \bA \Vert_{\op} + \lambda \le \rho_n + \lambda + \Vert \tilde{\bQ} - \bA \Vert_{\op}.
\end{align*}
Conditioned on $\xi_1, \ldots, \xi_n$, we bound the difference $\Vert \tilde{\bQ} - \bA \Vert_{\op}$ in probability by Corollary~\ref{corollary: symmetric matrix concentration} in Appendix \ref{appendix: tools}. For $\bX=\tilde{\bQ} - \bA$, we have $\sigma \leq \sqrt{n \rho_n}$ since
\begin{align*}
    \sigma &= \max_i \sqrt{\sum_{j} \Var(\bA_{ij})} = \max_i \sqrt{\sum_j \left(\bbE(\bA_{ij}^2) - \bbE(\bA_{ij})^2\right)} = \max_i \sqrt{\sum_j \left(\rho_n W(\xi_i, \xi_j) - \rho_n^2 W(\xi_i, \xi_j)^2\right)} \\
    &= \max_i \sqrt{\sum_j \rho_n W(\xi_i, \xi_j)\left(1 - \rho_n W(\xi_i, \xi_j)\right)} \leq \max_i \sqrt{\sum_j \rho_n W(\xi_i, \xi_j)} \leq \max_i \sqrt{\sum_j \rho_n} = \sqrt{n \rho_n}. 
\end{align*}
and $\sigma_* \le 1$ since $\tilde{\bQ}_{ij}, \bA_{ij} \in [0,1]$ for all $(i,j)$. Selecting $t=\sqrt{\constantBandeira \log \frac{n}{\delta}}$, we obtain $\Vert \tilde{\bQ} - \bA \Vert_{\op} \le 6 \sqrt{n \rho_n} + \sqrt{\constantBandeira \log \frac{n}{\delta}}$ with probability at least $1 -\delta$. Since the bound does not depend on $\xi_1, \ldots, \xi_n$, the conditioning can be removed. Finally, we have $\rho_n \le \sqrt{n \rho_n}$, and, thus, the result follows.

\section{Proof of Theorem~\ref{theorem: SBM convergence rate}}
\label{appendix: proof of SBM convergence rate}

{\textbf{i) Upper bound on the convergence rate.} 
We build the proof on that of Proposition 3.2 in~\citep{klopp2017}. However, we provide bounds in probability instead of bounds in expectation and we use the operator norm.
Define a function $\phi: [0, 1] \to [k]$, such that $\phi(x) = a$ iff $x \in P_a$ and a function $\psi: [0, 1] \to [k]$ such that for any $x\in \left(\frac{i-1}{n},\frac{i}{n}\right), \psi(x)=\phi(\xi_{i})$. 
Thus,
\begin{align*}
	\left\Vert  \rho_n \bbW - {\bbW}_{\bQ} \right\Vert_{\op}& \le \Vert  \rho_n \bbW - {\bbW}_{\bQ} \Vert_{F} \\
 & \le \rho_n \sqrt{\int_{[0,1]^2} (\bS_{\phi(x)\phi(y)} - \bS_{\psi(x)\psi(y)})^2 dx dy} \\
	& \le \rho_n \sqrt{\mu \bigl (\psi(x) \neq \phi(x) \text{ or } \psi(y) \neq \phi(y) \bigr )} \\
	& \le \rho_n \sqrt{2 \mu \bigl (\psi(x) \neq \phi(x) \bigr )}.
\end{align*}
Define $\widehat{e}_a = \frac{1}{n} \sum_{i = 1}^n \indicator{\xi_i \le e_a}.$
Note that $\{x \mid \phi(x) = \psi(x)\} \subseteq \bigcup_{a = 1}^{k - 1} [\max(e_{a - 1},\hat{e}_{a - 1}), \min(e_{a},\hat{e}_a)]$, where we assume that the segment \newline $[\max(e_{a - 1},\hat{e}_{a - 1}), \min(e_{a},\hat{e}_a)]$ is empty if $\min(e_{a}, \hat{e}_a) < \max(e_{a - 1}, \hat{e}_{a - 1})$. Hence, we have
\begin{align*}
    \mu\{\phi(x) \neq \psi(x)\} \le \sum_{a = 1}^{k - 1} |\widehat{e}_a - e_a|.
\end{align*}
We are left to bound $|e_a - \widehat{e}_a|$ for each $a \in [k-1]$. Fix any $a \in [k-1]$. Applying the Bernstein inequality, we obtain
\begin{align*}
        \P \left (\left |\widehat e_a - e_a \right| \ge t \right ) = \P \left( \left| \sum_{i = 1}^n (\indicator{\xi_i \le e_a} - e_a) \right| \ge n t\right) \le 2 \exp \left( - \frac{n^2 t^2 / 2}{\sum_{i = 1}^n  \Var(\indicator{\xi_i \le e_a})  + n t/3}\right ). 
\end{align*}
Since $\Var(\indicator{\xi_i \le e_a}) =  e_a(1 - e_a) \leq e_a$, we have
\begin{align*}
    \P \left(|e_a - \widehat e_a | \ge t \right ) \le 2 \exp \left( - \frac{n t^2}{2(e_a + t/3)}\right )
\end{align*}
for each $a = [k - 1]$. We choose $t = t_a$ to ensure that the right-hand side is at most $\delta/(2 k)$. For our purposes, it is sufficient to set
\begin{align*}
    t_a = \frac{2}{3n} \log \left( \frac{4k}{\delta} \right) + \sqrt{\frac{2 e_a}{n} \log \left ( \frac{4 k}{\delta}\right )}.
\end{align*}
Hence, by the union bound, with probability at least $1 - \delta/2$, it holds
\begin{align}
\label{eq: mu psi minus phi bound}
    \mu \{ \psi(x) \neq \phi(x)\} \le \sum_{a = 1}^{k-1} t_a \leq \frac{2k}{n} \log \frac{4k}{\delta} + \sum_{a = 1}^{k - 1} \sqrt{\frac{2e_a}{n} \log \frac{4k}{\delta}}.
\end{align}
The second term in the right-hand side can be bounded using the Cauchy-Schwartz inequality to obtain
\begin{align*}
    \sum_{a = 1}^{k - 1} \sqrt{\frac{2e_a}{n} \log \frac{4k}{\delta}} \leq \sqrt{\frac{2k }{n} \sum_{a = 1}^{k-1} e_a \log \frac{4k}{\delta}}.
\end{align*}
Assumption $n \ge 2 k \log \frac{4k}{\delta}$ implies
\begin{align*}
    \frac{2k}{n} \log \frac{4k}{\delta} \leq \sqrt{\frac{2 k}{n} \log \frac{4k}{\delta}}. 
\end{align*}
Substituting the above two displays into~\eqref{eq: mu psi minus phi bound}, we get
\begin{align*}
    \mu \{ \psi(x) \neq \phi(x)\} \leq \sqrt{\frac{8k \sum_{a = 1}^{k - 1} e_a}{n} \log \frac{4k}{\delta}}. 
\end{align*}
Hence,  
\begin{align}
\label{eq: error of connection probabilities graphon}
    \Vert \rho_n \bbW - \bbW_{\bQ} \Vert_{\op} \le \rho_n \sqrt{2 \mu(\psi(x)) \neq \rho(x))} \leq \rho_n \left (\frac{32 k \sum_{a = 1}^{k - 1} e_a}{n} \log \frac{4k}{\delta}\right )^{1/4}
\end{align}
with probability at least $1 - \delta/2$.
Bounding $\Vert \bQ - \widehat{\bQ}_\lambda \Vert$ via Theorem~\ref{theorem: UHTE performance} with probability at least $1 - \delta/2$ and applying Lemma \ref{lemma: empirical graphon norm} we obtain the result.

\textbf{ii) Upper bound on the rank.} For arbitrary matrix of connection probabilities $\bQ$, let $\widehat{\bQ}$ be its arbitrary estimator and $\operatorname{HT}_{\lambda}(\widehat{\bQ})$ be truncated SVD with threshold $\lambda$. Then, the rank of $\widehat{\bQ}_\lambda$ can be bounded as follows.
\begin{prop}
\label{proposition: singular value rank}
	Let $\sigma_1 \ge \ldots \ge \sigma_n$ be singular values of $\bQ$. Then, we have
	\begin{align*}
		\rank(\operatorname{HT}_{\lambda}(\widehat{\bQ})) & \le \max \{i \mid \sigma_i \ge \lambda -\Vert \bQ -\operatorname{HT}_{\lambda}(\widehat{\bQ}) \Vert_{\op}\}.
	\end{align*}	
	
\end{prop}

\begin{proof}
	If $i \le \rank(\operatorname{HT}_{\lambda}(\widehat{\bQ}))$, then $\sigma_i(\widehat{\bQ}) \ge \lambda$. Consequently, $\lambda \le \sigma_i(\bQ) + \Vert \widehat{\bQ} - \bQ \Vert_{\op} $ for such $i$ by Proposition~\ref{proposition: weyl inequality}, and $\rank(\widehat{\bQ}_\lambda) \le \max \{ i \mid \sigma_i(\bQ) \ge \lambda - \Vert \widehat{\bQ} - \bQ \Vert_{\op} \}$. 
\end{proof}

In our case, $\lambda = 9 \sqrt{n \rho_n}$ and, as in the proof of Theorem~\ref{theorem: UHTE performance}, we have 
\begin{align}
    \Vert \bQ - \bA \Vert_{\op} \le \Vert \bQ - \bA - \bD_{\bQ}\Vert_{\op} + \Vert \bD_{\bQ} \Vert_{\op} \le   7 \sqrt{n \rho_n} + \sqrt{\constantBandeira \log \frac{2n}{\delta}} \label{eq: Bandeira bound for Q minus A}
\end{align}
with probability at least $1 - \delta/2$ by Corollary~\ref{corollary: symmetric matrix concentration}. Since $n \rho_n \ge \constantBandeira \log \frac{2n}{\delta}$, we have $\lambda - \Vert \bQ - \widehat{\bQ}_\lambda \Vert_{\op} \ge  \sqrt{n \rho_n}$. Since $\bQ$ has rank at most $k$, we have $\sigma_i(\bQ) = 0$ for $i > k$ and
\begin{align*}
    \rank(\widehat{\bQ}_\lambda) \le \max \{i \mid \sigma_i(\bQ) \ge \sqrt{n \rho_n}\} \le k.
\end{align*}

\section{Proof of Theorem~\ref{theorem: SBM lower bound}}
\label{appendix: proof of SBM lower bound}

We use the standard framework based on hypotheses testing and adapt the proof technique from \citep{klopp2017}. As discussed in Section~\ref{sec:minimax bounds}, we should construct two SBM graphons $U$ and $V$ such that
\begin{enumerate}
    \item $\opnorm{\rho_n U - \rho_n V} \ge \rho_n n^{-1/4} / 8$;
    \item the corresponding distributions $\P_U$ and $\P_V$ satisfy $\KL{\P_V}{\P_U} \le 1/2$.
\end{enumerate}
We introduce a family of graphons parametrized by some real number $m \in (-1/2; 1/2)$ as follows:
\begin{align*}
    W_{m}(x, y) = \begin{cases}
        3/4, & \text{either $x, y \le 1/2 + m$ or $x, y \ge 1/2 + m$}, \\
        1/4, & \text{otherwise}.
    \end{cases}
\end{align*}
We choose $U$ to be $W_0$ and $V$ to be $W_{\Delta}$ for $\Delta$ specified in the sequel. First, we bound the difference between $W_0$ and $W_{\Delta}$ in the operator norm.
\begin{prop}
    \label{prop:risk lower bound}
    We have
    \begin{align*}
        \opnorm{W_0 - W_{\Delta}} = \frac{1}{2} \sqrt{\Delta (1 - \Delta)}.
    \end{align*}
\end{prop}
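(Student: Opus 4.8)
The plan is to identify the operator $W_0 - W_\Delta$ explicitly as an integral operator on $L_2[0,1]$, observe that its range is finite-dimensional, and then compute its nonzero singular values directly. Writing $a = 1/2$ and $b = 1/2 + \Delta$ (assuming $\Delta > 0$; the case $\Delta < 0$ is symmetric), the kernel $W_0(x,y) - W_\Delta(x,y)$ takes the value $0$ except on the ``mismatch'' region where exactly one of the two graphons assigns $3/4$ and the other $1/4$. A direct inspection shows that $W_0 - W_\Delta$ equals $\tfrac12$ on $[a,b] \times ([0,a) \cup [b,1])$ and on $([0,a)\cup[b,1]) \times [a,b]$, and $-\tfrac12$ on $[a,b]\times[a,b]$, and $0$ elsewhere. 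Hence, letting $g = \indicator{x \in [a,b)}$ (the indicator of the ``middle slab'' of width $\Delta$) and $h = \indicator{x \notin [a,b)} = 1 - g$ (the indicator of its complement, of total measure $1 - \Delta$), one can write the kernel in the rank-two form
\begin{align*}
    W_0(x,y) - W_\Delta(x,y) = \tfrac12 \bigl( g(x) h(y) + h(x) g(y) - g(x) g(y) \bigr).
\end{align*}

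Next I would diagonalize the associated operator $\bbD := \bbW_0 - \bbW_\Delta$ on the two-dimensional subspace $\mathrm{span}\{g, h\}$, which is manifestly invariant. In the (non-orthonormal) basis $\{g,h\}$ one has $\langle g,g\rangle = \Delta$, $\langle h,h\rangle = 1-\Delta$, $\langle g,h\rangle = 0$, and from the displayed formula $\bbD g = \tfrac12\bigl((\langle g,h\rangle - \langle g,g\rangle) g + \langle g,g\rangle h\bigr) = \tfrac12(-\Delta\, g + \Delta\, h)$ and $\bbD h = \tfrac12\bigl(\langle g,h\rangle\, h + \langle h,h\rangle\, g\bigr)$, wait — more carefully, $\bbD h = \tfrac12 \langle h, g\rangle$-type terms; I would recompute these inner products cleanly. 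Passing to the orthonormal basis $\tilde g = g/\sqrt{\Delta}$, $\tilde h = h/\sqrt{1-\Delta}$, the operator $\bbD$ is represented by a symmetric $2\times 2$ matrix whose entries involve $\sqrt{\Delta(1-\Delta)}$ and $\Delta$; its eigenvalues are the roots of a quadratic. The operator norm $\opnorm{\bbW_0 - \bbW_\Delta}$ is the larger of the absolute values of these two eigenvalues, and the claim is that this equals $\tfrac12\sqrt{\Delta(1-\Delta)}$. I would verify that the $2\times 2$ matrix has trace and determinant consistent with eigenvalues $\pm\tfrac12\sqrt{\Delta(1-\Delta)}$ (so in particular it is traceless), which pins down the operator norm.

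The main obstacle — really the only delicate point — is getting the bookkeeping of the mismatch region and the resulting inner products exactly right, since an error in the coefficient of the $g\otimes g$ term or in the measures $\Delta$, $1-\Delta$ will produce the wrong quadratic. I would double-check the rank-two representation by evaluating it at sample points in each of the four regions $[a,b]^2$, $[a,b]\times[a,b]^c$, etc., and confirm it reproduces $\pm\tfrac12$ and $0$ as claimed. Once the orthonormal $2\times 2$ representation is correct and seen to be traceless with the determinant equal to $-\tfrac14\Delta(1-\Delta)$, the eigenvalues are forced to be $\pm\tfrac12\sqrt{\Delta(1-\Delta)}$ and the proposition follows immediately; replacing $\Delta$ by $|\Delta|$ handles $\Delta < 0$.
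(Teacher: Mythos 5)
There is a genuine gap: your identification of the difference kernel is wrong, and the error is not cosmetic — it changes the answer. With $a=1/2$, $b=1/2+\Delta$ and $A=[0,a)$, $B=[a,b)$, $C=[b,1]$, note that on $B\times B$ both graphons equal $3/4$ (both coordinates are $\ge 1/2$ for $W_0$ and $\le 1/2+\Delta$ for $W_\Delta$), so the difference is $0$ there, not $-1/2$; and on $B\times A$ one has $W_0=1/4$, $W_\Delta=3/4$, so $W_0-W_\Delta=-1/2$, while on $B\times C$ it is $+1/2$. Hence the kernel is \emph{not} constant on $B\times(A\cup C)$ and cannot be written as $\tfrac12(g\otimes h+h\otimes g-g\otimes g)$ with $h=\one-g$. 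If you push your computation through with that (incorrect) kernel, the $2\times 2$ matrix in the orthonormal basis is
\begin{align*}
\frac12\begin{pmatrix}-\Delta & \sqrt{\Delta(1-\Delta)}\\ \sqrt{\Delta(1-\Delta)} & 0\end{pmatrix},
\end{align*}
which is not traceless and whose largest eigenvalue in absolute value is $\tfrac14\bigl(\Delta+\sqrt{4\Delta-3\Delta^2}\bigr)\neq \tfrac12\sqrt{\Delta(1-\Delta)}$ (e.g.\ at $\Delta=1/2$ it gives $\approx 0.40$ instead of $1/4$). So the verification targets you propose (traceless, determinant $-\tfrac14\Delta(1-\Delta)$) would in fact fail, and the proof as written does not establish the proposition.

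The fix is small and then your strategy works: the correct rank-two representation is $W_0-W_\Delta=\tfrac12\bigl(g(x)u(y)+u(x)g(y)\bigr)$ with $g=\indicator{x\in B}$ and the \emph{signed} function $u=\indicator{x\in C}-\indicator{x\in A}$, for which $\langle g,u\rangle_{L_2}=0$, $\Vert g\Vert_{L_2}^2=\Delta$, $\Vert u\Vert_{L_2}^2=1-\Delta$; in the orthonormal basis $g/\sqrt{\Delta}$, $u/\sqrt{1-\Delta}$ the operator is the off-diagonal matrix with entries $\tfrac12\sqrt{\Delta(1-\Delta)}$, whose eigenvalues are $\pm\tfrac12\sqrt{\Delta(1-\Delta)}$, giving the claim. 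With this correction your route is essentially a compressed version of the paper's argument: the paper works in the three-dimensional invariant subspace spanned by the indicators of $A,B,C$, solves the resulting $3\times3$ eigen-system, and uses $v_1=-v_3$ to reduce to a $2\times 2$ system — which is exactly the reduction your $u$ encodes from the start.
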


\begin{proof}
    Due to symmetry, we may assume that $\Delta > 0$. Note that a graphon $W' = W_{\Delta} - W_0$ is an SMB with three communities:
    \begin{align*}
        W'(x, y) & = \frac{1}{2} \indicator{x \in (1/2; 1/2 + \Delta)} \indicator{y \in (0; 1/2)} + \frac{1}{2} \indicator{x \in (0; 1/2)} \indicator{y \in (1/2; 1/2 + \Delta)} \\ 
         &\hskip -0.7 cm \quad - \frac{1}{2} \indicator{x \in (1/2 + \Delta; 1)} \indicator{y \in (1/2; 1/2 + \Delta)} - \frac{1}{2} \indicator{x \in (1/2; 1/2 + \Delta)} \indicator{y \in (1/2 + \Delta; 1)}.
    \end{align*}
    Since for any function $f$, $\bbW' f$ can be represented as
    \begin{align*}
        (\bbW' f)(x) = f_1 \indicator{x \in (0; 1/2)} + f_2 \indicator{x \in (1/2; \Delta)} + f_2 \indicator{x \in (1/2 + \Delta; 1)}
    \end{align*}
    for some numbers $f_1, f_2, f_3$,
    any eigenfunction $\bv$ of $\bbW'$ has the form:
    \begin{align*}
        \bv(x) = v_1 \indicator{x \in (0; 1/2)} + v_2 \indicator{x \in (1/2; {1/2 +}\Delta)} + v_3 \indicator{x \in (1/2 + \Delta; 1)}.
    \end{align*}
    Note that, the matrix of connection probabilities between communities of the SBM $W'$ has rank $2$  and $\bbW'$ has zero trace, so the graphon $W'$ has two different non-zero eigenvalues $\lambda$ and $-\lambda$, $\lambda > 0$. We find $\lambda$ from the system $\bbW' \bv = \lambda \bv$. We have
    \begin{align*}
        \begin{cases}
            \frac{\Delta}{2} v_2 & = \lambda v_1 \\
            \frac{1}{2} v_1 \cdot \frac{1}{2}- \frac{1}{2} v_3  \left ( \frac{1}{2} - \Delta \right )& = \lambda v_2 \\
            - \frac{\Delta}{2} v_2 & = \lambda v_3.
        \end{cases}
    \end{align*}
    From the first and the third equations, we deduce $v_1 =  - v_3$. Thus, we have
    \begin{align*}
        \begin{cases}
            \frac{\Delta}{2} v_2 & = \lambda v_1 \\
            \frac{1}{2} v_1 (1 - \Delta) & = \lambda v_2.
        \end{cases}
    \end{align*}
We multiply these equations, and get $\lambda^2 = \Delta (1 - \Delta) / 4$. Therefore, the proposition follows.
\end{proof}

Next, we bound the KL-divergence between distributions $\P_0$ and $\P_{\Delta}$ corresponding to graphons models $\rho_n W_0$ and $\rho_n W_{\Delta}$ respectively.
\begin{prop}
    \label{prop:KL bound}
    We have that $\KL{\P_{\Delta}}{\P_0} \le 16 n \Delta^2$.
\end{prop}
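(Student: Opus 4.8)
The plan is to exploit the two‑block structure of $W_0$ and $W_\Delta$ so that the unobserved positions $\xi_1,\dots,\xi_n$ influence the law of $\bA$ only through a one‑dimensional statistic, and then to apply the data‑processing inequality for the Kullback--Leibler divergence. Let $\bS\in[0,1]^{2\times 2}$ be the symmetric matrix with $\bS_{11}=\bS_{22}=3/4$ and $\bS_{12}=\bS_{21}=1/4$, and for $m\in\{0,\Delta\}$ put $N_m=\#\{i\in[n]:\xi_i\le 1/2+m\}$. Since $W_m$ is the two‑block step function with block matrix $\bS$ and boundary $1/2+m$, we have $\bQ_{ij}=\rho_n\bS_{z_{(i)}z_{(j)}}$, where $z_{(i)}\in\{1,2\}$ records which block the order statistic $\xi_{(i)}$ belongs to; because $\xi_{(1)}\le\dots\le\xi_{(n)}$, we have $z_{(i)}=1$ precisely for $i\le N_m$. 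Hence, conditionally on $N_m$, the matrix $\bQ$ is a fixed function of $N_m$ that does not depend on $m$, and $\bA$ is then drawn from $\bQ$ by the same Bernoulli mechanism in both models; moreover $N_m\sim\mathrm{Bin}(n,1/2+m)$.

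It follows that $\P_0$ and $\P_{\Delta}$ are the images of $\mathrm{Bin}(n,1/2)$ and $\mathrm{Bin}(n,1/2+\Delta)$ under one and the same Markov kernel (``form the block‑constant $\bQ$, then sample the Bernoulli entries''). The data‑processing inequality therefore gives
\begin{align*}
    \KL{\P_{\Delta}}{\P_0}\ \le\ \KL{\mathrm{Bin}(n,1/2+\Delta)}{\mathrm{Bin}(n,1/2)}.
\end{align*}
Realizing $\mathrm{Bin}(n,p)$ as the law of a sum of $n$ independent $\mathrm{Bern}(p)$ variables and using data processing once more, together with tensorization of KL over product measures, we obtain $\KL{\mathrm{Bin}(n,p)}{\mathrm{Bin}(n,q)}\le n\,\KL{\mathrm{Bern}(p)}{\mathrm{Bern}(q)}$. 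Finally, the elementary inequality $\KL{\mathrm{Bern}(p)}{\mathrm{Bern}(q)}\le (p-q)^2/(q(1-q))$, a direct consequence of $\log t\le t-1$, applied with $p=1/2+\Delta$ and $q=1/2$ yields $\KL{\mathrm{Bern}(1/2+\Delta)}{\mathrm{Bern}(1/2)}\le 4\Delta^2$. Chaining the three bounds gives $\KL{\P_{\Delta}}{\P_0}\le 4n\Delta^2\le 16n\Delta^2$.

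The one delicate point is the opening observation: one must notice that for these specific two‑block constructions the adjacency matrix depends on the latent positions only through the community‑size count $N_m$ --- equivalently, in the unordered sampling model, only through the vector of independent block labels --- so that the $\rho_n$‑dependent conditional law of $\bA$ is shared by both hypotheses and drops out. Expanding the conditional KL edge by edge and then using convexity would instead give a bound of order $\rho_n n^2\Delta$, far too weak to produce the $n^{-1/4}$ rate; the whole improvement comes from applying data processing at the level of the latent block sizes rather than at the level of the $\binom{n}{2}$ edges.
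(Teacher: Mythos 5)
Your proof is correct and follows essentially the same route as the paper: you reduce the comparison to the latent community-size statistic (the paper's $\theta$, your $N_m$), whose law is $\mathrm{Binom}(n,1/2+m)$, pass the KL through the common conditional law of $\bA$ (the paper via joint convexity of $x\log(x/y)$ and Jensen, you via the equivalent data-processing inequality), and then bound the Bernoulli KL by the chi-square divergence to get $4n\Delta^2\le 16n\Delta^2$. No gaps; the two arguments differ only in phrasing.
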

\begin{proof}
By the definition, we have
\begin{align*}
    \KL{\P_{\Delta}}{\P_0} = \sum_{A} \P_{\Delta}(\bA = A) \log \frac{\P_{\Delta}(\bA = A)}{\P_0(\bA = A)},
\end{align*}
where the sum is taken over all adjacency matrix $A$.
We introduce a new random variable $\theta = \max \{ k \mid \xi_{k} \text{ belongs to the first community}  \}$, with the convention that $\theta=0$ if $\xi_{k}$ belongs to the second community.  Subject to $\theta$, the distributions $\P_{\Delta} (\cdot \mid \theta)$ and $\P_0 (\cdot \mid \theta)$ are identical, since both of them are random graph models with the same matrix of connection probabilities $\bQ$:
\begin{align*}
    \bQ_{ij} = 
    \begin{cases}
        3\rho_n/4 , & \text{if $i, j \le \theta$ or $i, j > \theta$,} \\
        \rho_n/4 , & \text{otherwise.}
    \end{cases}
\end{align*}
For brevity, we define $p_{\bA | \theta}(A \mid t) = \P_0 (\bA = A \mid \theta = t) = \P_{\Delta} (\bA = A \mid \theta = t)$, $q(t) = \P_{\Delta}(\theta = t)$ and $p(t) = \P_{0}(\theta = t)$. Then, we get
\begin{align*}
    \KL{\P_{\Delta}}{\P_0} = \sum_{A} \sum_t p_{\bA \mid \theta}(A \mid t) q(t) \log \frac{\sum_{t} p_{\bA \mid \theta}(A \mid t) q(t)}{\sum_t p_{\bA \mid \theta} (A \mid t) p(t)}.
\end{align*}
Since the function $(x, y) \to x \log (x / y)$ is convex, due to the Jensen inequality, we have
\begin{align*}
    \KL {\P_{\Delta}}{\P_0} \le \sum_{t} q(t) \log \frac{q(t)}{p(t)}.
\end{align*}
The latter is the KL-divergence between two binomial distributions $Binom(n, 1/2 + \Delta)$ and \\$Binom(n, 1/2)$. We have
\begin{align*}
    \sum_{t} q(t) \log \frac{q(t)}{p(t)} & = \sum_{t = 0}^n \binom{n}{t}(1/2 + \Delta)^t (1/2 - \Delta)^{n - t} \log \frac{(1/2 + \Delta)^t (1/2 - \Delta)^{n - t}}{2^{-n}}  \\
    & = \KL{Bern^{\otimes n}(1/2 + \Delta)}{Bern^{\otimes n}(1/2)} \\
    & = n \KL{Bern(1/2 + \Delta)}{Bern(1/2)} \le {4} n \Delta^2,
\end{align*}
where we use the fact that KL-divergence does not exceed chi-square divergence.
\end{proof}

We choose $\Delta = ({2} \sqrt{2n})^{-1}$, so $\KL{\P_{\Delta}}{\P_0} \le 1/2$ and $\opnorm{\rho_n W_0 - \rho_n W_{\Delta}} \ge \rho_n n^{-1/4} / 8$. Application of Theorem~\ref{theorem:minimax bounds} finishes the proof.

\section{Proof of Theorem~\ref{theorem: Holder averaging}}
\label{section: upper bound proof - smooth graphons}

\begin{proof}
We analyze the bias part $\cS_m(\bQ) - \bQ$ and the variance  part $\cS_m(\bA) - \cS_m(\bQ)$ of the error matrix $\cS_m(\bA) - \bQ$ separately.

\noindent \textbf{Step 1. Bias.} We have
\begin{align}
    \Vert \cS_m(\bQ) - \bQ \Vert_{\op} & \le \max_i \sum_{j} |(\cS_m(\bQ))_{ij} - \bQ_{ij}|. \label{eq: L1 bound for bias}
\end{align}
By the definition of $\cS_m(\cdot)$ and the Jensen inequality, we have
\begin{align*}
     |\cS_m(\bQ)_{ij} - \bQ_{ij} | & \le \frac{1}{|\cB(i, j)|} \sum_{(i', j') \in \cB(i, j)} |\bQ_{i' j'} - \bQ_{i j}| \\
    & = \frac{1}{|\cB(i, j)|} \sum_{(i', j') \in \cB(i, j)} |\rho_n W(\xi_{(i')}, \xi_{(j')}) - \rho_n W(\xi_{(i)}, \xi_{(j)})|.
\end{align*}
Since the graphon is $(H, \alpha)$-H\"older, there exists a constant $C(H, \alpha) \ge 1$, such that for any $x, y, x', y'$, we have
\begin{align*}
    |W(x, y) - W(x', y')| \le C(H, \alpha) \left (|x - x'|^{\alpha \wedge 1} + |y - y'|^{\alpha \wedge 1} \right ).
\end{align*}
By the triangle inequality, it implies
\begin{align}
    |W(\xi_{(i)}, \xi_{(j)}) - W(\xi_{(i')}, \xi_{j'})| & \le C(H, \alpha) |\xi_{(i)} - \xi_{(i')}|^{\beta} + C(H, \alpha) |\xi_{(j)} - \xi_{(j')}|^{\beta} \nonumber
\end{align}
Using
\begin{align*}
    |\xi_{(i)} - \xi_{(i')}|^{\beta} & \le |\xi_{(i)} - i/n|^{\beta} + |\xi_{(i')} - i'/n|^\beta + |i - i'|^\beta / n^\beta, \\
    |\xi_{(j)} - \xi_{(j')}|^{\beta} & \le |\xi_{(j)} - j/n|^{\beta} + |\xi_{(j')} - j'/n|^\beta + |j - j'|^\beta / n^\beta, 
\end{align*}
and setting $\Delta = \max_{i} |\xi_{(i)} - i/n|$, we deduce
\begin{align*}
    |W(\xi_{(i)}, \xi_{(j)}) - W(\xi_{(i')}, \xi_{j'})| & \le C(H, \alpha) \left (4 \Delta^{\beta} + \left | \frac{i}{n} - \frac{i'}{n} \right |^{\beta}+ \left | \frac{j}{n} - \frac{j'}{n}\right |^{\beta} \right ).
\end{align*}
Since $(i', j') \in \cB(i, j)$, we have $|i - i'| \le 2m, |j - j'|\le 2m$ and so
\begin{align}
    |W(\xi_{(i)}, \xi_{(j)}) - W(\xi_{(i')}, \xi_{j'})| & \le 4 C(H, \alpha) \left ( \Delta^{\beta} +  \left (\frac{m}{n} \right )^{\beta} \right ). \label{eq: point difference in backets}
\end{align}
Then, by Corollary~\ref{corollary: order statistics unifrom concentration}, we have
\begin{align*}
    \max_i |\xi_{(i)} - i/n|^{\beta} \le \left ( \frac{\log (2/\delta)}{2n} \right )^{\beta/2}
\end{align*}
with probability $1 - \delta$.
Substituting to~\eqref{eq: point difference in backets} implies
\begin{align*}
    |W(\xi_{(i)}, \xi_{(j)}) - W(\xi_{(i')}, \xi_{j'})| \le 4 C(H, \alpha)   \left ( \frac{\log(2/\delta)}{2n} \right )^{\beta/2} + 4 C(H, \alpha)  (m/n)^{\beta}
\end{align*}
with probability at least $1 - \delta$. With the same probability, we have
\begin{align*}
    |\cS_m(\bQ)_{ij} - \bQ_{ij} | \le 4 C(H, \alpha)  \rho_n  \left ( \frac{\log(2/\delta)}{2n} \right )^{\beta/2} +  C(H, \alpha)\rho_n (m/n)^{\beta}
\end{align*}
uniformly over $i, j$. 
Applying the above to~\eqref{eq: L1 bound for bias}, we deduce
\begin{align}
    \Vert \cS_m(\bQ) - \bQ \Vert_{\op} \le 4 C(H, \alpha)  \rho_n n \cdot \left ( \frac{\log(2/\delta)}{2 n} \right )^{\beta/2} + 4 C(H, \alpha) \rho_n n \cdot (m/n)^{\beta} \label{eq: bias leading term}
\end{align}
with probability at least $1 - \delta$. 

\noindent \textbf{Step 2. Variance.} For brevity, define $\bE = \bA - \bQ - \bD_{\bQ}$, where $\bD_{\bQ}$ is the diagonal of $\bQ$. Then, we define a matrix $\overline{\bE} \in \bbR^{s \times s}$ of averages over $\cB(i, j)$ as follows:
\begin{align*}
    \avgMatrix_{a,b} = \frac{1}{|\cB_{(a - 1)m + 1}||\cB_{(b - 1)m + 1}|} \sum_{\substack{i' \in \cB_{(a -1)m + 1} \\ j' \in \cB_{(b - 1)m + 1}}} \bE_{i' j'}.
\end{align*}
Then, we have
\begin{align*}
    \cS_m(\bE) = Z \avgMatrix Z^\top
\end{align*}
for the matrix $Z_{ia} = \indicator{i \in \cB_{(a - 1)m + 1}}$. Since $Z^\top Z$ is the diagonal matrix with entries $|\cB_{(a - 1)m + 1}|, a = 1, \ldots, s$, we have
\begin{align*}
    \Vert \cS_m(\bE) \Vert_{\op} \le \Vert Z \Vert_{\op}^2 \Vert \avgMatrix \Vert_{\op} = \Vert Z^\top Z \Vert_{\op} \Vert \avgMatrix \Vert_{\op} \le \max_{a \in [s]} |\cB_{(a - 1)m + 1}| \cdot \Vert \avgMatrix \Vert_{\op} \le 2m \Vert \avgMatrix \Vert_{\op}.
\end{align*}

To bound $\Vert \avgMatrix \Vert_{\op}$, we use the following lemma, which proof is given in Section~\ref{section: proof of averages operator norm concentration}.
\begin{lemma}
\label{lemma: central averages lemma}
    For any $\delta \in (0, 1)$, we have
    \begin{align*}
        \Vert \avgMatrix\Vert_{\op} \le    6 \sqrt{\frac{s \rho_n}{m^2}} + 4 \left ( \sqrt{\frac{32 \rho_n \log(8 s^2 m^2/\delta)}{m^2}} + \frac{12 \log(8 s^2 m^2/\delta)}{m^2}\right ) \sqrt{\constantBandeira \log(2s/\delta)}.
    \end{align*}
with probability at least $1 - \delta$.
\end{lemma}
It implies
\begin{align}
     \Vert \cS_m(\bE) \Vert_{\op} \le 12 \sqrt{s \rho_n} + 192 \left ( \sqrt{\rho_n \log(3 n/\delta)} + \frac{\log(3 n/\delta)}{m}\right ) \sqrt{\constantBandeira \log(2s/\delta)}\label{eq: bound on E0}
\end{align}
with probability at least $1 - \delta$. It implies
\begin{align*}
    \Vert \cS_m(\bA) - \cS_m(\bQ) \Vert_{\op} & = \Vert \cS_m(\bE + \bD_{\bQ}) \Vert_{\op} \le \Vert \cS_m(\bE) \Vert_{\op} + \Vert \cS_m(\bD_{\bQ})\Vert_{\op}  \\
    & \le \Vert \cS_m(\bE) \Vert_{\op} + \rho_n \\
    & \le 13 \sqrt{s \rho_n} + 192 \left ( \sqrt{\rho_n \log(3 n/\delta)} + \frac{\log(3 n/\delta)}{m}\right ) \sqrt{\constantBandeira \log(2s/\delta)}.
\end{align*}

\noindent \textbf{Step 3. Balancing bias and variance.} Combining the above with~\eqref{eq: bias leading term}, we obtain
\begin{align*}
    \Vert \cS_m(\bA) - \bQ \Vert_{\op} & \le 4 C(H, \alpha)  \rho_n n \cdot \left ( \frac{\log(4/\delta)}{2 n} \right )^{\beta/2} + 4 C(H, \alpha) \rho_n n \cdot (m/n)^{\beta} \\
    & \quad + 13 \sqrt{\frac{n \rho_n}{m}} + 192 \left ( \sqrt{\rho_n \log(6 n/\delta)} + \frac{\log(6n/\delta)}{m}\right ) \sqrt{\constantBandeira \log \frac{4 n}{m\delta}}
\end{align*}
with probability at least $1 - \delta$.
For $m = \left \lceil n^{(2 \beta - 1)/(2 \beta + 1)} \rho_n^{-1/(2 \beta + 1)} \right \rceil$, we obtain
\begin{align}
    \Vert \cS_m(\bA) - \bQ \Vert_{\op} & \le 4 C(H, \alpha)  \rho_n n \cdot \left ( \frac{\log(4/\delta)}{2 n} \right )^{\beta/2} \nonumber \\
    & \quad + 4 C(H, \alpha) \rho_n n \cdot \left (\frac{n^{(2 \beta - 1)/(2 \beta + 1)} \rho_n^{-1/(2 \beta + 1)} +1}{n} \right )^{\beta}  \label{eq: bias leading term in total sum} \\
    & \quad + 13 \sqrt{\frac{n \rho_n}{n^{(2 \beta - 1)/(2 \beta + 1)} \rho_n^{-1/(2 \beta + 1)}}} \label{eq: variance leading term in total sum} \\
    & \quad + 192 \left ( \sqrt{\rho_n \log(6 n/\delta)} + \frac{\log(6n/\delta)}{n^{(2 \beta - 1)/(2 \beta + 1)} \rho_n^{-1/(2 \beta + 1)}}\right ) \sqrt{\constantBandeira \log \frac{4 n}{m\delta}} \nonumber
\end{align}
with probability at least $1 - \delta$.
Using $n^{2\beta - 1} \ge \rho_n$ due to assumption~\eqref{eq: m is larger 1 assumption}, we upper bound the sum of terms~\eqref{eq: bias leading term in total sum} and~
\eqref{eq: variance leading term in total sum} by $(8 C(H, \alpha) + 13)\rho_n^{\frac{\beta + 1}{2 \beta + 1}} n^{1/(2 \beta + 1)}$. Then, we get
\begin{align}
     \Vert \cS_m(\bA) - \bQ \Vert_{\op} & \le 4 C(H, \alpha)  \rho_n n \cdot \left ( \frac{\log(4/\delta)}{2n} \right )^{\beta/2} \nonumber \\
    & \quad + (8 C(H, \alpha) + 13)\rho_n^{\frac{\beta + 1}{2 \beta + 1}} n^{1/(2 \beta + 1)
    } \label{eq: varaince term - simplified sum} \\
    & \quad + 192 \left ( \sqrt{\rho_n \log(6 n/\delta)} + \frac{\log(9n/\delta)}{n^{(2 \beta - 1)/(2 \beta + 1)} \rho_n^{-1/(2 \beta + 1)}}\right ) \sqrt{\constantBandeira \log \frac{4 n}{m\delta}} \label{eq: remainder - simplified sum}.
\end{align}
Next, by the first condition of~\eqref{eq: assumption sqrt-rho}, we have
\begin{align*}
    \rho_n^{(\beta + 1)/(2 \beta + 1)} n^{1/(2\beta + 1)} \ge \rho_n^{1/2} \log \frac{6n}{\delta} \ge \rho_n^{1/2} \sqrt{\log \frac{6 n}{\delta} \cdot \log \frac{4n}{m \delta}}
\end{align*}
and, by the second condition of~\eqref{eq: assumption sqrt-rho}, we have
\begin{align*}
    (n \rho_n^{\beta + 1})^{1/(2 \beta + 1)} \ge \left ( \frac{\rho_n}{n^{2 \beta - 1}} \right )^{\frac{1}{2 \beta + 1}} \log^{3/2} \frac{9n}{\delta} \ge \left ( \frac{\rho_n}{n^{2 \beta - 1}} \right )^{\frac{1}{2 \beta + 1}} \log \frac{9n}{\delta} \sqrt{\log \frac{4n}{m \delta}},
\end{align*}
so~\eqref{eq: varaince term - simplified sum} dominates remainder~\eqref{eq: remainder - simplified sum}. It implies
\begin{align*}
    \Vert \cS_m(\bA) - \bQ \Vert_{\op} & \le4 C(H, \alpha)   \rho_n n \cdot \left ( \frac{\log(4/\delta)}{n} \right )^{\beta/2} \\
    & \quad + (8 C(H, \alpha) + 13 + 384 \constantBandeira^{1/2})\rho_n^{\frac{\beta + 1}{2 \beta + 1}} n^{1/(2 \beta + 1) 
    }
\end{align*}
with probability at least $1- \delta$.
\end{proof}

\subsection{Proof of Lemma~\ref{lemma: central averages lemma}}
\label{section: proof of averages operator norm concentration}

Through the proof of the lemma, we consider the distribution of $\bE$ conditional on $\xi_1, \ldots, \xi_n$, so its upper diagonal entries are independent Bernoulli random variables with parameters $\bQ_{ij}$. 

\noindent \textbf{Step 1. Choosing a threshold.} A naive application of Corollary~\ref{corollary: symmetric matrix concentration} with $\sigma_* = 1$ to $\overline{\bE}$ would lead to suboptimal concentration bounds, so we choose a threshold $\tau$ and decompose $\overline{\bE}$ into the sum of two matrices $\avgLessT$ and $\avgLarge$ with entries smaller and larger than $\tau$, respectively: 
$\avgMatrix = \avgLessT+ \avgLarge$, where
\begin{align*}
    (\avgLessT)_{ab} & = \avgMatrix_{ab} \indicator{|\avgMatrix_{ab}| \le \tau}, \\
    (\avgLarge)_{ab} & = \avgMatrix_{ab} \indicator{|\avgMatrix_{ab}| > \tau}.
\end{align*}
It will be convenient to choose $\tau$ such that
\begin{align}
    \bbP \left ( |\avgMatrix_{a,b}| > \tau \right ) \le \frac{\delta}{2s^2 m^2} \label{eq: large probability bound for matrix concentration}
\end{align}
holds for any $a, b \in [s]$. 
By the Bernstein inequality, if $a \neq b$, we have
\begin{align*}
    \bbP \left ( |\avgMatrix_{a,b}| > \tau \right ) \le 2 \exp \left ( - \frac{m^2 \tau^2/2}{\rho_n+ \tau/3 } \right ).
\end{align*}
If $a = b$, then set
\begin{align*}
    p = \begin{cases}
        m & \text{ if } a \le s - 1 \\
        m + \ell & \text{if } a = s.
    \end{cases}
\end{align*}
Then, we have
\begin{align*}
     \bbP \left ( |\avgMatrix_{ab}| > \tau \right ) & \le \bbP \left ( \frac{1}{p^2} \left |\sum_{r = 1}^p \bE_{(a - 1) m + r, (a - 1)m + r} \right | > \tau/2  \right ) \\
     & \qquad + \bbP \left ( \frac{2}{p^2} \left |\sum_{r = 1}^p \sum_{r' = r + 1}^p \bE_{(a - 1) m + r, (a - 1)m + r'} \right | > \tau/2  \right ) \\
     & \le 2 \exp \left ( - \frac{p^4 \tau^2/8}{(\rho_n p + \tau/6p^2)} \right ) + 2 \exp \left (- \frac{p^2 \tau^2/32}{\rho_n /2  + \tau/12} \right )
\end{align*}
if $a = b$. In either case, we have
\begin{align*}
    \bbP \left ( |\overline{\bE}_{ab}| > \tau \right ) \le 4 \exp \left ( - \frac{m^2 \tau^2/32}{\rho_n + \tau/3}\right ).
\end{align*}
So, we set
\begin{align*}
    \tau = \sqrt{\frac{32 \rho_n \log(8 s^2 m^2/\delta)}{m^2}} + \frac{11 \log(8 s^2 m^2/\delta)}{m^2},
\end{align*}
for which~\eqref{eq: large probability bound for matrix concentration} is satisfied.

\noindent \textbf{Step 2. Applying matrix concentration.} Next, we apply Corollary~\ref{corollary: symmetric matrix concentration} to $\avgLessT - \bbE \avgLessT$. We have
\begin{align*}
    \max_a \sum_b \bbE (\avgLessT - \bbE \avgLessT)_{ab}^2 \le \max_a \sum_b \bbE (\avgLessT)_{ab}^2 \le \max_a \sum_b \bbE (\overline{\bE}_0)_{ab}^2 \le s \rho_n / m^2,
\end{align*}
and
\begin{align*}
    \max_{a, b} \left | (\avgLessT - \bbE \avgLessT)_{ab} \right | \le 2\tau.
\end{align*}
Therefore, by Corollary~\ref{corollary: symmetric matrix concentration}, we obtain
\begin{align*}
    \Vert \avgLessT - \bbE \avgLessT \Vert_{\op} \le 6 \sqrt{s \rho_n / m^2} + 4\tau \sqrt{\constantBandeira \log(2s/\delta)}
\end{align*}
with probability at least $1 - \delta/2$,
which implies
\begin{align*}
    \Vert \avgLessT \Vert_{\op} \le \Vert  \bbE \avgLessT \Vert_{\op} + 6 \sqrt{s \rho_n / m^2} + 4\tau \sqrt{\constantBandeira \log(2s/\delta)}
\end{align*}
with probability at least $1 - \delta/2$. Using
\begin{align*}
    0 = \bbE \overline{\bE}_0 = \bbE \avgLessT + \bbE \avgLarge,
\end{align*}
we obtain
\begin{align*}
    \Vert \avgLessT \Vert_{\op} \le \Vert \bbE \avgLarge \Vert_{\op} + 6 \sqrt{s \rho_n / m^2} + 4 \tau \sqrt{\constantBandeira \log(2s/\delta)}
\end{align*}
with probability at least $1 - \delta/2$.

\noindent \textbf{Step 3. Bounding $ \Vert \bbE \avgLarge \Vert_{\op}$ and $\Vert \avgLarge \Vert_{\op}$.} We start with bounding $\Vert \bbE \avgLarge \Vert_{\op}$. We have
\begin{align*}
    \Vert \bbE \avgLarge \Vert_{\op} & \le s \max_{a, b} \bbE |\avgMatrix_{ab}| \le s \max_{a, b} \bbP(|\avgMatrix_{ab}| > \tau) \le \frac{1}{m^2}
\end{align*}
due to~\eqref{eq: large probability bound for matrix concentration}.

Then, we bound $\Vert \avgLarge \Vert_{\op}$. We have
\begin{align*}
    \Vert \avgLarge \Vert_{\op} & \le \max_a \sum_{b = 1}^s |(\avgLarge)_{ab}| \le \max_{a} \sum_{b = 1}^s \indicator{|(\overline{\bE}_0)_{ab}| > \tau}.
\end{align*}
The union bound implies
\begin{align*}
    \bbP \left (\max_{b \in [s]} \indicator{|\avgMatrix_{ab}| > \tau} = 1 \right ) \le \sum_{b = 1}^s \bbP \left ( |\avgMatrix_{ab}| > \tau \right ) \le \frac{\delta}{2s m^2}.
\end{align*}
In particular, 
\begin{align*}
    \max_{a} \sum_{b = 1}^{s - 1} \indicator{|\avgMatrix_{ab}| > \tau} = 0
\end{align*}
with probability at least $1 - \delta/2$. Therefore, with probability at least $1 - \delta$, we have
\begin{align*}
    \Vert \avgMatrix\Vert_{\op} & \le \frac{1}{m^2} + 6 \sqrt{\frac{s \rho_n}{m^2}} + 4 \tau \sqrt{\constantBandeira \log(2s/\delta)} \\
    & \le  6 \sqrt{\frac{s \rho_n}{m^2}} + 4 \left ( \sqrt{\frac{32 \rho_n \log(8 s^2 m^2/\delta)}{m^2}} + \frac{12 \log(8 s^2 m^2/\delta)}{m^2}\right ) \sqrt{\constantBandeira \log(2s/\delta)}.
\end{align*}
Since the latter bound holds for any realization of $\xi_1, \ldots, \xi_n$, it also holds unconditionally.

\section{Proof of Theorem~\ref{theorem: holder convergence rate}}
\label{appendix: proof of Holder convergence rate}

i) \textbf{Upper bound on the convergence rate.} We build the proof on that of Proposition~3.5 in~\citep{klopp2017}. The difference is that we bound the error in probability instead of bounding in expectation. We first bound the difference in the Frobenius norm and then, using that, bound the difference in the spectral norm. 


By the definition of the empirical graphon 
\begin{align*}
	W_{\bQ}(x,y) = \rho_n\sum_{i j} W(\xi_{(i)}, \xi_{(j)}) \cdot I\{x \in [(i - 1)/n, i/n]\} \cdot I\{y \in [(j - 1)/n, j/n)\}.
\end{align*}
Define $\Delta_{ij} = [(i-1)/n, i/n) \times [(j - 1)/n, j/n)$. Then for any $(x, y) \in \Delta_{ij}$ we have
\begin{align*}
	|W(x, y) - W(\xi_{(i)}, \xi_{(j)})| \le \left |W(x, y) - W \left ( i/n, j/n \right ) \right | + \left |  W \left ( i/n, j/n \right ) - W(\xi_{(i)}, \xi_{(j)})\right |.
\end{align*}
Since the graphon $W$ is $(H, \alpha)$-H\"older there exists a constant $C(H, \alpha)$ such that
\begin{align*}
	\left |W(x, y) - W \left ( i/n, j/n \right ) \right | & \le C(H, \alpha) \left ( |x - i/n|^{\alpha \land 1} + |y - j/n|^{\alpha \land 1} \right ) \\
	& \le 2 C(H, \alpha) n^{-(\alpha \land 1)},
\end{align*}
where $\alpha \land 1=\min\{\alpha,1\}.$
Analogously, we have
\begin{align*}
	\left |  W \left ( i/n, j/n \right ) - W(\xi_{(i)}, \xi_{(j)})\right | \le C(H, \alpha) \left (|\xi_{(i)} - i/n|^{\alpha \land 1} +  |\xi_{(j)} - j/n|^{\alpha \land 1}\right ).
\end{align*}
Thus, we obtain
\begin{align}
    \|\rho_n \bbW - \bbW_{\bQ}\|_F^2 &= \int_{[0, 1]^2} (\rho_n W(x, y) - W_{\bQ}(x, y))^2 dx dy \nonumber \\
    & \le 2\rho_n^2 \left(4 C^2(H, \alpha) n^{-2 (\alpha \land 1)} \nonumber\right. \\
	&\left. \quad \quad + C^2(H, \alpha) \sum_{i j} \frac{1}{n^2} \left (|\xi_{(i)} - i/n|^{\alpha \land 1} +  |\xi_{(j)} - j/n|^{\alpha \land 1}\right )^2\right) \nonumber \\
	& \le 2\rho_n^2 \left(4 C^2(H, \alpha) n^{-2 (\alpha \wedge 1)} + \frac{4 C^2(H, \alpha)}{n} \sum_{i = 1}^n |\xi_{(i)} - i/n|^{2 (\alpha \wedge 1)}\right). \label{eq: L2 decomposition for Holder graphon}
\end{align}
Due to Corollary~\ref{corollary: order statistics unifrom concentration}, we get that with probability at least $1 - \delta/4$
\begin{align*}
	\frac{1}{n} \sum_{i = 1}^n \left | \xi_{(i)} - i/n \right |^{2 (\alpha \wedge 1)}  \le \frac{1}{2} \left ( \frac{\log (8/\delta)}{n} \right )^{(\alpha \wedge 1)}.
\end{align*}
Substituting the above into~\eqref{eq: L2 decomposition for Holder graphon}, we get
\begin{align*}
    \Vert \rho_nW - W_{\bQ} \Vert_{F}^2 & \le 2\rho_n^2 \left(4 C^2(H, \alpha) n^{-2 (\alpha \wedge 1)} + 2 C^2(H, \alpha) \left ( \frac{\log (8/\delta)}{n} \right )^{(\alpha \wedge 1)}\right)  \\
    & \le 16\rho_n^2 C^2(H, \alpha) \left ( \frac{\log \frac{8}{\delta}}{n} \right )^{(\alpha \wedge 1)},
\end{align*}
which implies
 $   \Vert \rho_n\bbW - \bbW_{\bQ} \Vert_{F} \leq4\rho_n C(H, \alpha) \left ( \frac{\log \frac{8}{\delta}}{n} \right )^{\frac{(\alpha \wedge 1)}{2}}$
with probability at least $1 - \delta/4$. Since the Frobenius norm bounds the spectral norm, the above implies
\begin{align}
	\opnorm{\rho_n \bbW - \bbW_{\bQ}} \le 4 \rho_n C(H, \alpha) \left ( \frac{\log (8/\delta)}{n} \right )^{\frac{(\alpha \wedge 1)}{2}}. \label{eq: holder matrix of connection probabilities difference}
\end{align}

Then, we bound $\Vert \bQ - \bQhat^*_{\beta}\Vert_{\op}$. If $n^{2 \beta - 1} > \rho_n$, then, due to Theorem~\ref{theorem: Holder averaging}, we have
\begin{align*}
    \Vert \bQ - \bQhat^*_{\beta} \Vert_{\op} & \le \lambda' + \Vert \cS_m(\bA) - \bQ \Vert_{\op} \\
    & \le \max \left \{\rho_n n \cdot \left (\frac{\log (16/\delta)}{n} \right )^{\beta/2}, \rho_n^{\frac{\beta + 1}{2 \beta + 1}} n^{1/(2 \beta + 1)} \right \} \\
    & \qquad +  C_1 \rho_n n \cdot \left ( \frac{\log(16/\delta)}{n} \right )^{\beta/2}  + C_2\rho_n^{\frac{\beta + 1}{2 \beta + 1}} n^{1/(2 \beta + 1)} \\
    & \le (C_1 + 1) \rho_n n \cdot \left ( \frac{\log(16/\delta)}{n} \right )^{\beta/2}  + (C_2 + 1)\rho_n^{\frac{\beta + 1}{2 \beta + 1}} n^{1/(2 \beta + 1)}
\end{align*}
with probability at least $1 - \delta/4$. 
If $n^{2 \beta - 1} \le \rho_n$, then $\bQhat^*_{\beta} = \widehat{\bQ}_{\lambda}$ and
\begin{align*}
    \Vert \bQ - \bQhat^*_{\beta} \Vert_{\op} & \le 16 \sqrt{n \rho_n} + \sqrt{\constantBandeira \log \frac{4n}{\delta}} 
\end{align*}
due to Theorem~\ref{theorem: UHTE performance} with probability at least $1 - \delta/4$. Using the assumption $n \rho_n \ge \log(4n/\delta)$, we get
\begin{align*}
    \Vert \bQ - \bQhat^*_{\beta} \Vert_{\op} & \le (16 + \constantBandeira^{1/2}) \sqrt{ n \rho_n}.
\end{align*}

Finally, note that
\begin{align*}
    \min \left \{ \sqrt{n \rho_n},  \rho_n^{\frac{\beta + 1}{2 \beta + 1}} n^{1/(2 \beta + 1)}\right \} = \begin{cases}
        \sqrt{n \rho_n} & \text{ if } n^{2 \beta - 1} \le \rho_n, \\
        \rho_n^{\frac{\beta + 1}{2 \beta + 1}} n^{1/(2 \beta + 1)} & \text{ if } n^{2 \beta - 1} > \rho_n,
    \end{cases}
\end{align*}
which, due to Lemma~\ref{lemma: empirical graphon norm}, implies
\begin{align}
    \Vert \rho_n \bbW - \bbW_{\bQhat^*_{\beta}} \Vert_{\op}&  \le \Vert \rho_n \bbW - \bbW_{\bQhat^*_{\beta}} \Vert_{\op} + \frac{\Vert \bQ - \bQhat^*_{\beta} \Vert_{\op}}{n} \nonumber \\
    & \le C \rho_n n \cdot \left ( \frac{\log(16/\delta)}{n} \right )^{\beta/2}  + C' \min \left \{\sqrt{\frac{\rho_n}{n}},  \left (\rho_n^{\beta + 1}/n^{2 \beta} \right)^{1/(2 \beta + 1)}\right \}
\label{eq: final bound in the proof of Holder graphon rates}
\end{align}
with probability at least $1 - \delta/2$ for some constants $C, C' > 0$ depending only on $H$ and $\alpha$.

ii) \textbf{Upper bound on the rank.} We start by recalling the following result from approximation theory. 
\begin{lemma}
\label{lemma: Holder function approximation}
	Fix $k \in \mathbb{N}$. Define $\Delta_{ij} = [(i - 1)/k, i/k) \times [(j - 1)/k, j/k)$ for $1 \le i, j \le k$. Define $\ell = \lfloor \alpha \rfloor$. Then for any $(H, \alpha)$-H\"older graphon $W$ there exists a function $P_k: [0, 1]^2 \to \bbR$ such that
	\begin{enumerate}
		\item for any $\Delta_{i j}$, the function $P_k|_{\Delta_{ij}}$ is polynomial of degree at most $\ell$,
		\item the error of approximation satisfies
		\begin{align*}
			\sup_{x, y} |W(x, y) - P_k(x, y)| \le 2 H k^{-\alpha}.
		\end{align*}
	\end{enumerate}
\end{lemma}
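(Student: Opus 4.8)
The plan is to construct $P_k$ cell by cell out of local Taylor polynomials of $W$, which automatically enforces the degree constraint, and then to read off the error bound from the defining property of $\Sigma(\alpha,H)$. For $(i,j)\in[k]^2$, let $(a_i,b_j)=\big((i-1)/k,(j-1)/k\big)$ denote the lower-left corner of $\Delta_{ij}$. Since $W\in\Sigma(\alpha,H)$, its partial derivatives up to order $\ell=\lfloor\alpha\rfloor$ exist, so the degree-$\ell$ Taylor polynomial of $W$ at $(a_i,b_j)$ is well defined; I would set
\begin{align*}
P_k(x,y):=P_{\ell}\big((a_i,b_j),\,(x-a_i,\,y-b_j)\big)\qquad\text{for }(x,y)\in\Delta_{ij},
\end{align*}
and extend $P_k$ to the edges $x=1$ or $y=1$ using the polynomial of the adjacent cell. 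Because $P_{\ell}\big((a_i,b_j),(u,v)\big)$ is a polynomial of total degree at most $\ell$ in $(u,v)$ and $(x,y)\mapsto(x-a_i,y-b_j)$ is affine, $P_k|_{\Delta_{ij}}$ is a polynomial of degree at most $\ell$, which is property (1).

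For property (2), fix a cell $\Delta_{ij}$ and a point $(x,y)\in\Delta_{ij}$. The defining inequality of the class $\Sigma(\alpha,H)$, applied with base point $(a_i,b_j)$ and evaluation point $(x,y)$, controls the degree-$\ell$ Taylor remainder and gives
\begin{align*}
\big|W(x,y)-P_k(x,y)\big|\le H\big(|x-a_i|^{\alpha}+|y-b_j|^{\alpha}\big).
\end{align*}
On $\Delta_{ij}$ one has $0\le x-a_i<1/k$ and $0\le y-b_j<1/k$, hence $\big|W(x,y)-P_k(x,y)\big|\le 2Hk^{-\alpha}$, and the same bound holds on the reassigned boundary edges. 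Taking the supremum over the $k^2$ cells and over each cell yields $\sup_{x,y}\big|W(x,y)-P_k(x,y)\big|\le 2Hk^{-\alpha}$, as claimed.

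This is essentially a bookkeeping argument, so I do not anticipate a serious obstacle; the one point that must be handled with care is the remainder estimate in the second paragraph, i.e.\ making sure that membership in $\Sigma(\alpha,H)$ — which controls the order-$\ell$ behaviour of $W$ — forces the degree-$\ell$ Taylor remainder to decay like the increment raised to the power $\alpha$, so that the cell size $1/k$ produces the stated $k^{-\alpha}$ rate; if one wants this from the raw \holder-type condition on $W$ rather than quoting it, it follows from Taylor's theorem with integral remainder. An alternative construction that bypasses differentiability of $W$ altogether is to take $P_k|_{\Delta_{ij}}$ to be the tensor-product polynomial interpolant of $W$ at $(\ell+1)^2$ Chebyshev nodes of $\Delta_{ij}$; a Jackson-type estimate then yields degree-$\ell$ pieces with an $O(Hk^{-\alpha})$ error, at the cost of replacing the constant $2$ by one depending on $\alpha$.
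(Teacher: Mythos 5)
Your construction is correct, and there is nothing in the paper to compare it against at the level of method: the paper does not prove this lemma but simply cites Lemma~3 of \cite{xu2017}, which proceeds exactly as you do (piecewise Taylor polynomials anchored at one point of each cell, with the \holder{} remainder bound supplying the rate). The degree count and your treatment of the boundary $x=1$ or $y=1$ are both fine, as is the alternative Chebyshev-interpolation route, at the cost of an $\alpha$-dependent constant. One caveat is worth recording about the step you yourself flagged as the crux. You invoke
\begin{align*}
	\bigl|W(x,y)-P_k(x,y)\bigr|\le H\bigl(|x-a_i|^{\alpha}+|y-b_j|^{\alpha}\bigr),
\end{align*}
i.e.\ the Taylor remainder bound with exponent $\alpha$, whereas Definition~\ref{definition: holder graphon} as printed bounds the degree-$\lfloor\alpha\rfloor$ remainder by $H(|x'-x|^{\alpha-\lfloor\alpha\rfloor}+|y'-y|^{\alpha-\lfloor\alpha\rfloor})$. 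For $\alpha\le 1$ these coincide, but for $\alpha>1$ the printed condition only yields $2Hk^{-(\alpha-\lfloor\alpha\rfloor)}$, which is too weak for the stated conclusion. This is a defect of the printed definition rather than of your argument: the intended class (as in \cite{xu2017}) has $\lfloor\alpha\rfloor$-th order partial derivatives that are $(\alpha-\lfloor\alpha\rfloor)$-\holder, which by Taylor's theorem with integral remainder gives a remainder of order $h^{\alpha}$ at scale $h$ — exactly the estimate your proof needs and correctly identifies as the point requiring care.
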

For the proof, see, for example, Lemma 3 in~\citep{xu2017}.

First, we claim that $P_k(x, y)$ has rank at most $k \cdot (\ell + 1)^2$. Indeed, it can be represented as a sum of functions
\begin{align*}
	P_k(x, y) = \sum_{a, b, a + b \le \ell} W_{\bS^{ab}}(x,y) x^a y^b,
\end{align*}
where functions $W_{\bS^{ab}}$ are constants on each $\Delta_{ij}$, i.e. $W_{\bS^{ab}}|_{\Delta_{ij}} = \bS^{ab}_{ij}$ for some matrix $\bS^{ab} \in \bbR^{k \times k}$. Thus, each $W_{\bS^{ab}}$ has rank at most $k$, and, consequently, $P_k$ has rank at most $k \cdot (\ell + 1)^2$ (see Appendix \ref{sec:rank_function}). \\

Second, we state that $\bP_k = \{P_k(\xi_{(i)}, \xi_{(j)})\}_{ij}$ has the same rank as $P_k$. Indeed, if $P_k(x, y)$ can be represented as a sum $\sum_{r = 1}^{k (\ell + 1)^2} \phi_r(x) \psi_r(y)$ of functions of rank $1$, then $\bP_k$ has decomposition $\sum_{r = 1}^{k (\ell + 1)^2} \bv_r \bg_r^\top$, where $\bv_r = (\phi_r(\xi_{(i)}))_{i = 1}^n$, $\bg_r = (\psi_r(\xi_{(i)}))_{i = 1}^n$.

We aim to show that for some constant $C(H,\alpha)$, we have $\sigma_k(\bQ) \le C(H, \alpha) n \rho_n k^{-(\alpha + 1/2)}$ for all $k \in \bbN$. First, we bound $\sigma_{k} (\bQ)$ for each $k \ge (\ell + 1)^3$. Define $k_1 = \left \lceil \frac{k}{(\ell + 1)^3} \right \rceil$ and $k_2 = k - k_1 (\ell + 1)^2$.
Eckart - Young - Mirsky theorem (see Proposition~\ref{proposition: eckart-yang theorem}) for $\bA=\bQ$, $r=k_1 (\ell+1)^2$, $\bB=\rho_n \bP_{k_1}$  and
Lemma~\ref{lemma: Holder function approximation}, imply
\begin{align*}
	k_2 \sigma^2_k(\bQ) \le \sum_{r = k_1 (l + 1)^2 + 1}^n \sigma_r^2(\bQ) \le  \Vert \bQ - \rho_n \bP_{k_1} \Vert_{\mathrm{F}}^2 \le (2 H n \rho_n k_1^{-\alpha})^2 ,
\end{align*}
and, consequently, $\sigma_k(\bQ) \le 2 H n \rho_n k_2^{-1/2} k_1^{-\alpha}\leq C'(H, \alpha) n \rho_n k^{-(\alpha + 1/2)}$ where $C'(H, \alpha)$ is a constant depending only on $H$ and $\alpha$. 
On the other hand, as $\Vert \bQ \Vert_{\op} \le n \rho_n$ taking $C'(H, \alpha)$ large enough the bound holds also for  $k<(l+1)$.

Finally, we apply Proposition~\ref{proposition: singular value rank}. If $n^{2\beta - 1} > \rho_n$, then $\widehat{\bQ}^*_{\beta}$ is obtained from $\cS_m(\bA)$ by hard-thresholding with threshold $\lambda'$. Hence, 
\begin{align*}
    \rank(\widehat{\bQ}^*_{\beta}) & \le \max \{k: \sigma_k(\cS_m(\bA)) > \lambda' \} \\
    & \le \max \{k: \sigma_k(\bQ) > \lambda' - \Vert \cS_m(\bA) - \bQ \Vert_{\op} \} \\
    & \le  \max \left \{k: C(H, \alpha) n \rho_n k^{-(\alpha + 1/2)} >  C_1\rho_n n \cdot \left ( \frac{\log(8/\delta)}{n} \right )^{\beta/2}  + C_2\rho_n^{\frac{\beta + 1}{2 \beta + 1}} n^{1/(2 \beta + 1)} \right \},
\end{align*}
where the last inequality holds with probability at least $1 - \delta/2$ due to the bound on $\Vert \cS_m(\bA) - \bQ \Vert_{\op}$ obtained in Theorem~\ref{theorem: Holder averaging}. Hence, we get
\begin{align*}
    \rank(\widehat{\bQ}^*_{\beta}) & \le C'(H, \alpha) \left (  \left (\frac{\log \frac{8}{\delta}}{n} \right )^{\beta/2} + \left ( \frac{1}{n^2 \rho_n}\right )^{\frac{\beta}{2 \beta + 1}} \right )^{- \frac{1}{\alpha + 1/2}} \\
    & \le  C'(H, \alpha) \cdot \min \Big \{\frac{n^{\beta/(2 \alpha + 1)}}{\log^{\beta/(2 \alpha + 1)}(8/\delta)}, \; (n^2 \rho_n)^{2\beta (2 \beta + 1)^{-1} / (2\alpha + 1)} \Big \}
\end{align*}
with probability at least $1 - \delta/2$. Combined with~\eqref{eq: final bound in the proof of Holder graphon rates}, the above implies the result of the theorem in the case $n^{2\beta - 1} > \rho_n$.

If $n^{2 \beta - 1} \le \rho_n$, then $\widehat{\bQ}^*_{\beta} = \widehat{\bQ}_\lambda$ and, due to Proposition~\ref{proposition: singular value rank}, we have
\begin{align*}
    \rank(\widehat{\bQ}_\lambda) \le \max \{k: \sigma_k(\bQ) > \lambda - \Vert \bQ - \bA \Vert_{\op} \}.
\end{align*}
Bounding $\Vert \bQ - \bA \Vert_{\op}$ by~\eqref{eq: Bandeira bound for Q minus A}, we obtain
\begin{align*}
    \rank(\widehat{\bQ}_\lambda) & \le \max \{k: \sigma_k(\bQ) > \sqrt{n \rho_n} \} \\
        & \le \max \{k: C(H, \alpha) n \rho_n k^{-(\alpha + 1/2)} > \sqrt{n \rho_n} \} \\
        & \le C''(H, \alpha) \left ( n \rho_n\right )^{\frac{1}{2 \alpha + 1}}
\end{align*}
with probability at least $1 - \delta/2$. Combined with~\eqref{eq: final bound in the proof of Holder graphon rates}, the above concludes the proof.

\section{Proof of Theorem~\ref{theorem: holder graphon estimation lower bound}}
\label{appendix: proof of holder graphon estimation lower bound}

\begin{proof}
    To prove the theorem, it is sufficient to prove the following two lemmas.
    \begin{lemma}
    \label{lemma: holder graphon estimation lower bound 1}
        Let $\Sigma(H, \alpha)$ be the class of $(H, \alpha)$-H\"older graphons for $\alpha \in (0; 1]$. Suppose that $n \ge (H^{1/\alpha} + 1)^2$. Then, there exists a finite set $\Sigma_1 \subset \Sigma(H, \alpha)$ such that for any estimator $\widehat{\bbW}$, we have 
        \begin{align*}           
            \inf_{\widehat{\bbW}} \max_{W \in \Sigma_1} \bbP_{\bA \sim \rho_n W} \left (   \Vert \rho_n \bbW -  \widehat{\bbW}(\bA) \Vert_{\op} \ge \frac{ H \rho_n}{2^{13} n^{\alpha/2}} \right ) \ge \frac{1}{6}.
        \end{align*}
    \end{lemma}

    \begin{lemma}
    \label{lemma: holder graphon estimation lower bound 2}
        Let $\Sigma(H, \alpha)$ be the class of $(H, \alpha)$-H\"older graphons for $\alpha \in (0; 1]$. Suppose that $n \rho_n \ge H^{1/(2 \alpha)} / 6$ and $H^2 n^2 \rho_n \ge 2^{20}$.  Then, there exists a finite set $\Sigma_2 \subset \Sigma(H, \alpha)$ such that for any estimator $\widehat{\bbW}$, we have
        \begin{align*}
            \inf_{\widehat{\bbW}} \max_{W \in \Sigma_2} \bbP_{\bA \sim \rho_n W} \left ( \Vert \rho_n \bbW - \widehat{ \bbW     }(\bA) \Vert_{\op} \ge \frac{1}{30} \left ( \frac{H \rho_n^{1 + \alpha}}{n^{2 \alpha}} \right )^{\frac{1}{(1 + 2 \alpha)}} \right ) \ge \frac{1}{6}.
        \end{align*}
    \end{lemma}
    Then, the lemmas imply
    \begin{align*}
       \inf_{\widehat{\bbW}} \max_{W \in \Sigma_1 \cup \Sigma_2} \bbP_{\bA \sim \rho_n W} \left ( \Vert \rho_n \bbW - \widehat{ \bbW     }(\bA) \Vert_{\op} \ge \max \left \{  \frac{1}{30} \left ( \frac{H \rho_n^{1 + \alpha}}{n^{2 \alpha}}\right )^{\frac{1}{(1 + 2 \alpha)}}, \frac{ H \rho_n}{2^{14} n^{\alpha/2}} \right \}  \right ) \ge \frac{1}{6}.
    \end{align*}
    It yields
    \begin{align*}
       \inf_{\widehat{\bbW}} \max_{W \in \Sigma_1 \cup \Sigma_2} \bbP_{\bA \sim \rho_n W} \left ( \Vert \rho_n \bbW - \widehat{ \bbW     }(\bA) \Vert_{\op} \ge \frac{1}{60} \left ( \frac{H \rho_n^{1 + \alpha}}{n^{2 \alpha}}\right )^{\frac{1}{(1 + 2 \alpha)}} + \frac{ H \rho_n}{2^{15} n^{\alpha/2}}   \right ) \ge \frac{1}{6}. & \qedhere
    \end{align*}
\end{proof}

\subsection{Proof of Lemma~\ref{lemma: holder graphon estimation lower bound 1}}

\begin{proof}
	\textbf{Step 1. Constructing hypotheses.}We start by the following standard construction. Let $\eta(x)$ be a function defined by
    \begin{align}
    \label{eq: eta definition}
        \eta(x) = (1 - 2|x|) \indicator{|x| \le 1/2}.
    \end{align}
	Fix $H>0$ and define a H\"older graphon $W^{(0)}$ by
	\begin{align*}
	& W^{(0)}(x,y) := \frac12 \\
    & \quad + \frac{H}{4} \sum_{a = 1}^{\ts} \sum_{b =1}^{\ts } \ts^{-\alpha} \eta \left (x\ts - a + \frac{1}{2} \right ) \eta \left ( y \ts - b + \frac{1}{2} \right ) \indicator{x \in \left  [\frac{(a - 1)}{\ts}; \frac{a}{\ts} \right ], y \in \left [\frac{(b - 1)}{\ts}; \frac{b}{\ts}\right ]},
	\end{align*}
    where $\ts \ge \max \{2, H^{1/\alpha}\}$ is an integer divisible by $4$ to be chosen later.
    The following proposition shows that $W^{(0)}(\cdot, \cdot)$ is $(H/2, \alpha)$-H\"older. 
    \begin{prop}
    \label{proposition: Holder graphons construction}
    For any integer $\ts$, any $G > 0$, any weights $\omega_{ab} \in [-1; 1]$, $1 \le a, b \le \ts$, and any $\alpha \in (0; 1]$, a graphon
    \begin{align*}
        & W(x, y) = \frac{1}{2} \\
        & \quad + \frac{G }{2 \ts^{\alpha}} \sum_{a = 1}^{\ts} \sum_{b = 1}^{\ts} \omega_{ab}  \eta \left (x\ts - a + \frac{1}{2} \right ) \eta \left ( y \ts - b + \frac{1}{2} \right ) \indicator{x \in \left  [\frac{(a - 1)}{\ts}; \frac{a}{\ts} \right ], y \in \left [\frac{(b - 1)}{\ts}; \frac{b}{\ts}\right ]},
    \end{align*}
    is $(G, \alpha)$-H\"older.
    \end{prop}
    The proof of Proposition~\ref{proposition: Holder graphons construction} is given in Section~\ref{section: proof of holder graphon construction}.

	Next define a smooth increasing diffeomorphism $\tau_t:[0,1]\to[0,1]$ by
	\[
	\tau_t(x) := x + t\,\varphi(x),
	\qquad \text{where}\quad \varphi(x):=x(1-x).
	\]
	For $|t|\le 1/2$, we have $\tau_t(0)=0$, $\tau_t(1)=1$, and
	$\tau_t'(x)=1+t(1-2x)\in[1-|t|,\,1+|t|]$, hence $\tau_t$ is strictly increasing.
	Define the second graphon as the reparameterization
	\[
	W^{(1)}(x,y) := W^{(0)}(\tau_t(x),\tau_t(y)).
	\]
    We claim that $W^{(1)}$ is $(H, \alpha)$-H\"older as well. Indeed, since $W^{0}$ is $(H/2, \alpha)$-H\"older, for $x,x',y,y'\in[0,1]$, we have
    \begin{align*}
        |W^{(1)}(x,y) - W^{(1)}(x',y')| & \le \frac{H}{2}  \left (|\tau_t(x) - \tau_t(x')|^\alpha + |\tau_t(y) - \tau_t(y')|^\alpha \right ) \\
        & \le \frac{H (1 + |t|)^\alpha}{2}  \left (|x - x'|^\alpha + |y - y'|^\alpha \right ) \\
         & \le H  \left (|x - x'|^\alpha + |y - y'|^\alpha \right ),
    \end{align*}
    where we used $\tau'_t(x)\in[1-|t|,\,1+|t|]$ for $x\in[0,1]$.

	Consider the difference $\Delta W := W^{(1)}-W^{(0)}$. We claim that $\Vert \Delta \bbW \Vert_{\op} \ge 2^{-13} H t^{\alpha}$, provided $\ts^{-1}/ 2 \le t \le \ts^{-1}$. Since $\Delta W(x, y)$ can be represented as
    \begin{align}
        \Delta W(x, y) = \frac{H \ts^{-\alpha}}{4} \psi(\tau_t(x)) \psi(\tau_t(y)) - \frac{H \ts^{-\alpha}}{4} \psi(x) \psi(y)
    \end{align}
    for the function
    \begin{align*}
        \psi(x) := \sum_{a = 1}^{\ts}  \eta \left (x\ts - a + \frac{1}{2} \right ) \indicator{x \in \left  [\frac{(a - 1)}{\ts}; \frac{a}{\ts} \right ]},
    \end{align*}
    the graphon $\Delta W(x, y)$ has rank 2, so
    \begin{align*}
        \Vert \Delta \bbW \Vert_{\op} \ge \frac{1}{\sqrt{2}}\Vert \Delta W \Vert_{\mathrm{HS}} = \frac{H\ts^{-\alpha}}{4\sqrt{2}} \sqrt{\int_0^1 \int_0^1 \left ( \psi(\tau_t(x)) \psi(\tau_t(y)) -  \psi(x) \psi(y) \right )^2 dx dy}. 
    \end{align*}
    To bound the latter integral from below, we represent it as 
    {\begin{align*}
        &\sum_{a = 1}^{\ts} \sum_{b = 1}^{\ts} \int_{\frac{a - 1}{\ts}}^{\frac{a}{\ts}} \int_{\frac{b - 1}{\ts}}^{\frac{b}{\ts}} \left ( \psi(\tau_t(x)) \psi(\tau_t(y)) -  \eta(\{\ts x\} - 1/2) \eta(\{\ts y\} - 1/2) \right )^2 dx dy \\
        & \quad \ge \sum_{a = \ts/4 + 1}^{3 \ts/4} \sum_{b = \ts/4 + 1}^{3 \ts/4} \int_{\frac{a - 1}{\ts}}^{\frac{a - 3/4}{\ts}} \int_{\frac{b -1}{\ts}}^{\frac{b- 3/4}{\ts}} \left ( \psi(\tau_t(x)) \psi(\tau_t(y)) -   \eta(\{\ts x\} - 1/2) \eta(\{\ts y\} - 1/2) \right )^2 dx dy
    \end{align*}}
    If $a \in [\ts/4 + 1; 3 \ts/4]$, $x \in [(a - 1)/\ts; (a - 3/4)/\ts]$, and $t \in (0; \ts^{-1})$, then
    \begin{align*}
        \phi(x) \in [3/16; 1/4] \quad \text{and} \quad \psi(\tau_t(x)) =   1 - 2 \left ( \frac{1}{2} - \{\ts x \}- \ts t \phi(x)\right ) \ge \eta(\{\ts x\} - 1/2) = \psi(x).
    \end{align*}
    Therefore, the above double sum is bounded from below by
    \begin{align*}
        & \sum_{a = \ts/4 + 1}^{3 \ts/4} \sum_{b = \ts/4 + 1}^{3 \ts/4} \int_{\frac{a - 1}{\ts}}^{\frac{a - 3/4}{\ts}} \int_{\frac{b -1}{\ts}}^{\frac{b- 3/4}{\ts}} \left ( \psi(\tau_t(x)) -   \eta(\{\ts x\} - 1/2)  \right )^2 \eta^2(\{\ts y\} - 1/2) dx dy \\
        & \qquad = \sum_{a = \ts/4 + 1}^{3 \ts/4}  \int_{\frac{a - 1}{\ts}}^{\frac{a - 3/4}{\ts}} \left ( \psi(\tau_t(x)) -   \eta(\{\ts x\} - 1/2)  \right )^2 dx \cdot \sum_{b = \ts/4 + 1}^{3 \ts/4}\int_{\frac{b -1}{\ts}}^{\frac{b- 3/4}{\ts}} \eta(\{\ts y\} - 1/2)^2 dy \\
        & \qquad = \sum_{a = \ts/4 + 1}^{3 \ts/4}  \int_{\frac{a - 1}{\ts}}^{\frac{a - 3/4}{\ts}} \left (2\ts t \phi(x) \right )^2 dx \cdot \sum_{b = \ts/4 + 1}^{3 \ts/4}\int_{\frac{b -1}{\ts}}^{\frac{b- 3/4}{\ts}} (2 \{\ts y \})^2 dy \\
        & \qquad \ge \frac{9 t^2 \ts^2}{2^{11}} \cdot \frac{\ts/2}{48 \ts} = \frac{3 t^2 \ts^2}{2^{16}}.
    \end{align*}
    Then, it implies
    \begin{align*}
        \Vert \Delta \bbW \Vert_{\op} \ge \frac{H t \ts^{1-\alpha} }{2^{11}}.
    \end{align*}
    Then, if $t \ge \ts^{-1}/2$, then, using $\ts^{-1} \ge t$, we get $\Vert \Delta \bbW \Vert_{\op} \ge 2^{-12} H t^{\alpha}$, as desired.

	\medskip
	\noindent \textbf{Step 2. Bounding the KL divergence.} Let $\bbP_0$ be a distribution of $\bA$ under the sampling model~\eqref{eq: Q sampling}-\eqref{eq: A sampling} with $W=W^{(0)}$, and let $\bbP_1$ be the distribution of $\bA$ under the same sampling model with $W=W^{(1)}$. To bound $\KL{\bbP_1}{\bbP_0}$, we use the following lemma, which is a consequence of the data-processing inequality for KL divergence. For the proof, see Section~\ref{section: proof of shifted graphon KL-divergence}.
    \begin{lemma}
    \label{lemma: shifted graphon KL-divergence}
    Set $\tau_t(x) = x + t x(1-x)$ for $t \in [-1/2; 1/2]$. Let $W(\cdot, \cdot)$ be an arbitrary graphon and define $W_t(x, y) := W(\tau_t(x), \tau_t(y))$. Then, for any $\rho_n \in (0; 1]$, we have
    \begin{align*}
        \KL{P_{W_t}}{P_W} \le 4 n t^2,
    \end{align*}
    where $P_W$ and $P_{W_t}$ are the distributions of $\bA$ under the sampling model~\eqref{eq: Q sampling}-\eqref{eq: A sampling} with graphons $W$ and $W_t$ respectively.
    \end{lemma}

   We choose $t = 1/4 \sqrt{n}$ and set $\ts =  4 \lfloor \sqrt{n} \rfloor$. Since $n \ge (1 + H^{1/\alpha})^2$, we have $\ts \ge \max \{2 , H^{1/\alpha}\}$, so the above construction is valid. Then, we have $\KL{\bbP_1}{\bbP_0} \le 1/4$.

	\medskip
	\noindent \textbf{Step 3. Applying Theorem~\ref{theorem:minimax bounds}.} Since $\Vert \rho_n \bbW^{(1)} - \rho_n \bbW^{(0)} \Vert_{\op} = \rho_n \Vert \Delta \bbW \Vert_{\op} \ge 2^{-12} H t^{\alpha} \rho_n$, by Theorem~\ref{theorem:minimax bounds}, we have
    \begin{align*}
        \inf_{\widehat{\bbW}} \max_{W \in \{W^{(0)}, W^{(1)}\}} \bbP_{\bA \sim \rho_n W} \left (   \Vert \rho_n \bbW -  \widehat{\bbW}(\bA) \Vert_{\op} \ge 2^{-13} H t^{\alpha} \rho_n \right ) \ge 1/6,
    \end{align*}
    which completes the proof.
\end{proof}

\subsubsection{Proof of Proposition~\ref{proposition: Holder graphons construction}}
\label{section: proof of holder graphon construction}

\begin{proof} The proof follows that of Lemma 2.1 from~\citep{gao2014rate}. For $x, x', y, y'$ consider two cases. If $|x - x'| \ge \ts^{-1}$ or $|y - y'| \ge \ts^{-1}$, we have
\begin{align*}
    W(x, y) - W(x', y') \le G \ts^{-\alpha} \le  G (|x - x'|^{\alpha} + |y - y'|^{\alpha}).
\end{align*}
Otherwise, let $a, a', b, b' \in [\ts]$ be such that $x \in [(a - 1)/\ts; a/\ts]$, $x' \in [(a' - 1)/\ts; a'/\ts]$, $y \in [(b - 1)/\ts; b/\ts]$, and $y' \in [(b' - 1)/\ts; b'/\ts]$. Then, by the triangle inequality, we have
\begin{align}
    W (x, y) - W(x', y') & \le \frac{G}{2} \ts^{-\alpha} |\omega_{ab} \eta\left (\{\ts x\} - \frac{1}{2} \right ) - \omega_{a' b'}\eta \left (\{\ts x'\} - \frac{1}{2} \right )| |\eta(\{\ts y\} - 1/2)| \nonumber \\
    & \qquad +  \frac{G}{2} \ts^{-\alpha} |\omega_{a'b'}| |\eta \left ( \{\ts x' \} - 1/2\right )| |\eta \left (\{\ts y\} - \frac{1}{2}\right ) - \eta \left  (\{\ts y'\} - \frac{1}{2} \right )|, \label{eq: triangle inequality in Holder graphon construction}
\end{align}
where $\{\cdot\}$ stands for the fractional part. To bound
\begin{align*}
    \left |\omega_{ab} \eta\left (\{\ts x\} - \frac{1}{2} \right ) - \omega_{a' b'}\eta \left (\{\ts x'\} - \frac{1}{2} \right ) \right |,
\end{align*}
consider two cases. If $a = a'$, then 
\begin{align*}
    \left |\omega_{ab} \eta\left (\{\ts x\} - \frac{1}{2} \right ) - \omega_{a' b'}\eta \left (\{\ts x'\} - \frac{1}{2} \right ) \right | \le 2 \ts |\omega_{ab}| |x - x'| \le 2 \ts |x - x'|.
\end{align*}
If $a \neq a'$, then without loss of generality, we can assume that $a < a'$, so, using $|x' - x| \le \ts^{-1}$, we deduce $a' = a + 1$. Then, we have
\begin{align*}
    \left |\omega_{ab} \eta\left (\{\ts x\} - \frac{1}{2} \right ) - \omega_{a' b'}\eta \left (\{\ts x'\} - \frac{1}{2} \right ) \right | & \le \left |\omega_{ab} \eta\left (\{\ts x\} - \frac{1}{2} \right ) \right | + \left |\omega_{a' b'}\eta \left (\{\ts x'\} - \frac{1}{2} \right ) \right |  \\
    & \le 2 \ts \left ( \frac{a}{\ts} - x + x' - \frac{a}{\ts} \right ) = 2 \ts |x - x'|.
\end{align*}
It implies
\begin{align*}
    \left |\omega_{ab} \eta\left (\{\ts x\} - \frac{1}{2} \right ) - \omega_{a' b'}\eta \left (\{\ts x'\} - \frac{1}{2} \right ) \right | \le 2 \ts |x - x'|.
\end{align*}
Similarly, we have
\begin{align*}
   |\omega_{a'b'}| |\eta \left ( \{\ts x' \} - 1/2\right )| |\eta \left (\{\ts y\} - \frac{1}{2}\right ) - \eta \left  (\{\ts y'\} - \frac{1}{2} \right )| \le 2 \ts |y - y'|.
\end{align*}
Substituting the above bounds into~\eqref{eq: triangle inequality in Holder graphon construction}, we get
\begin{align*}
     W (x, y) - W(x', y') & \le G \ts^{1-\alpha} (|x - x'| + |y - y'|) \\
     & \le G (|x - x'|^\alpha + |y - y'|^\alpha) \cdot \max \left \{ \ts |x - x'|, \ts |y - y'| \right \}^{1 - \alpha} \\
     & \le G (|x - x'|^\alpha + |y - y'|^\alpha),
\end{align*}
so $W$ is indeed $(G, \alpha)$-H\"older.
\end{proof}

\subsubsection{Proof of Lemma~\ref{lemma: shifted graphon KL-divergence}}
\label{section: proof of shifted graphon KL-divergence}

\begin{proof}
    Under $W_t(\cdot, \cdot)$, the sampling model~\eqref{eq: Q sampling}-\eqref{eq: A sampling} can be rewritten as follows:
	sample $\xi_1,\dots,\xi_n\stackrel{\text{i.i.d.}}{\sim}\mathrm{Uniform}[0,1]$ and define
	$\zeta_i := \tau_t(\xi_i)$. Since $\tau_t$ is increasing, the order is preserved, i.e.,
	$\zeta_{(i)}=\tau_t(\xi_{(i)})$ for every $i$.
	Then the edge probabilities satisfy
	\[
	\bQ^{(t)}_{ij} = \rho_n W_{t}(\xi_{(i)},\xi_{(j)})
	= \rho_n W(\tau_t(\xi_{(i)}),\tau_t(\xi_{(j)}))
	= \rho_n W(\zeta_{(i)},\zeta_{(j)}).
	\]
	In other words, \emph{conditional on the ordered latent points}, the distribution of $\mathbf A$ is obtained by the
	same Markov kernel (Bernoulli edges with probabilities $\rho_n W^{(0)}(\cdot,\cdot)$); only the distribution of the
	latent points differs between the two models ($\zeta$-points versus uniform points).
	
	Formally, let $K(\cdot\,|\,\mathbf u)$ denote the conditional law of $\mathbf A$ given ordered latent points
	$\mathbf u=(u_{(1)},\dots,u_{(n)})$ under the kernel $W^{(0)}$.
	Then
	\[
	P_{W^{(0)}}(\mathbf A\in\cdot)=\int K(\cdot\,|\,\mathbf u)\,d\mu_0(\mathbf u),
	\qquad
	P_{W^{(1)}}(\mathbf A\in\cdot)=\int K(\cdot\,|\,\mathbf u)\,d\mu_t(\mathbf u),
	\]
	where $\mu_0$ is the law of $(\xi_{(1)},\dots,\xi_{(n)})$ and $\mu_t$ is the law of $(\zeta_{(1)},\dots,\zeta_{(n)})$.
	By the data-processing inequality for KL divergence (see Lemma~\ref{lem:dp_kl_markov_kernel}), we have
	\[
	\mathrm{KL}\!\left(P_{W_t} \,\|\, P_{W}\right)
	\;\le\;
	\mathrm{KL}\!\left(\mu_t \,\|\, \mu_0\right).
	\]
	Since sorting is a deterministic map, another data-processing step yields
	\[
	\mathrm{KL}(\mu_t\|\mu_0)\le \mathrm{KL}\!\left(\mathcal L(\zeta_1,\dots,\zeta_n)\,\big\|\,\mathcal L(\xi_1,\dots,\xi_n)\right)
	= n\,\mathrm{KL}(g_t\|1),
	\]
	where $g_t$ is the density of $\zeta_1=\tau_t(\xi_1)$ with respect to Lebesgue measure and $\mathcal{L}(X)$ is the law of a random variable $X$.
	Recall $\xi_1\sim \mathrm{Uniform}[0,1]$ and $\tau_t(x)=x+t x(1-x)=(1+t)x-tx^2$.
	For $|t|\le 1/2$, $\tau_t$ is strictly increasing with $\tau_t'(x)=(1+t)-2tx>0$, hence invertible and  the density satisfies
	$g_t(z)=1/\tau_t'(\tau_t^{-1}(z))$.
	Solving $z=(1+t)x-tx^2$ gives
	\[
	\tau_t^{-1}(z)=\frac{(1+t)-\sqrt{(1+t)^2-4tz}}{2t}\quad (t\neq 0),
	\]
	and thus
	\[
	\tau_t'(\tau_t^{-1}(z))=\sqrt{(1+t)^2-4tz}
	\quad\Rightarrow\quad
	g_t(z)=\frac{1}{\sqrt{(1+t)^2-4tz}}.
	\]
	Moreover, since $(1-|t|)^2\le (1+t)^2-4tz\le (1+|t|)^2$, we have
	$\frac{1}{1+|t|}\le g_t(z)\le \frac{1}{1-|t|}$.
	Therefore the $\chi^2$ divergence is bounded by
	\[
	\chi^2(g_t, 1)=\int_0^1 (g_t(z)-1)^2\,dz \le \frac{t^2}{(1-|t|)^2},
	\]
	and using $\mathrm{KL}\le \chi^2$ we obtain, for $|t|\le 1/2$,
	\[
	\mathrm{KL}(g_t, 1)\le 4t^2
	\quad\Rightarrow\quad
	\mathrm{KL}\!\left(P_{W_t} \,\|\, P_{W}\right)\le 4n t^2.
	\]
	With $t=1/(4\sqrt{n})$, this gives $\mathrm{KL}(P_{W_t}\|P_{W})\le 1/4$.
\end{proof}

\subsection{Proof of Lemma~\ref{lemma: holder graphon estimation lower bound 2}}

\begin{proof}
\textbf{Step 1. Constructing hypotheses.} 
Let $\eta(\cdot)$ be a function defined by~\eqref{eq: eta definition} and $\ts \ge H^{1/\alpha}$ be an integer to be chosen later. Then, for a vector $\zeta \in \{-1, 1\}^\ts$ define
\begin{align*}
    & W_{\zeta}(x, y) = \frac{1}{2} \\
    &   + \frac{H}{2}\sum_{a = 1}^{\ts} \sum_{b =1}^{\ts } \ts^{-\alpha} \zeta_a \zeta_b \eta \left (x\ts - a + \frac{1}{2} \right ) \eta \left ( y \ts - b + \frac{1}{2} \right ) \indicator{x \in \left  [\frac{(a - 1)}{\ts}; \frac{a}{\ts} \right ], y \in \left [\frac{(b - 1)}{\ts}; \frac{b}{\ts}\right ]}
\end{align*}
and let $W_0(x, y) = 1/2$ be a constant graphon.

We will choose vectors $\zeta_i$ such that $|\langle \zeta_i, \zeta_j \rangle| \le \ts/2$ for any $i, j$ by the following lemma, whose proof is given in Section~\ref{section: vector packing lemma}.

\begin{lemma}
\label{lemma: almost orthogonal vectors}
For $\ts \ge 18 \cdot 17$, there exists a family $\Xi \subset \{-1, 1\}^{\ts}$ such that for any distinct $\zeta, \zeta' \in \Xi$ we have $|\langle \zeta, \zeta' \rangle | \le \ts/2$ and $|\Xi| \ge e^{\ts/18}$.
\end{lemma}

In what follows, we fix a family $\Xi$ provided by Lemma~\ref{lemma: almost orthogonal vectors}. Consider a family of graphons $\{W_{\zeta} \}_{\zeta \in \Xi}$. Then, we bound $\Vert W_{\zeta} - W_{\zeta'} \Vert_{\op}$ for each distinct $\zeta, \zeta' \in \Xi$. Choose a function $f_{\zeta}$ given by
\begin{align*}
    f_{\zeta}(x) = \sum_{a = 1}^{\ts} \zeta_a \indicator{x \in [(a - 1)/\ts; a/\ts)}.
\end{align*}
Obviously, $\Vert f_{\zeta} \Vert_{L_2} = 1$. Then, we have
\begin{align*}
     (\bbW_{\zeta} - \bbW_{\zeta'}) [f_{\zeta}](x) & =  \frac{H}{2}\ts^{-\alpha} \sum_{a = 1}^\ts \eta(\ts x - a + \frac{1}{2} ) \indicator{x \in \left [\frac{a - 1}{\ts}; \frac{a}{\ts} \right ]} \sum_{b = 1}^\ts (\zeta_a \zeta_b - \zeta'_a \zeta_b') \cdot \frac{\zeta_b}{2 \ts} \\
     & = \frac{H}{4}\ts^{-\alpha} \sum_{a = 1}^\ts \eta(\ts x - a + \frac{1}{2} ) \indicator{x \in \left [\frac{a - 1}{\ts}; \frac{a}{\ts} \right ]}  \left (\zeta_a  - \zeta'_a \frac{\langle \zeta,  \zeta' \rangle}{s} \right ).
\end{align*}
Therefore, we have
\begin{align*}
    \Vert (\bbW_{\zeta} - \bbW_{\zeta'}) [f_{\zeta}] \Vert_{L_2}^2 & = \frac{H^2 \ts^{-2 \alpha}}{48 \ts} \sum_{a = 1}^\ts \left (\zeta_a - \frac{\zeta_a' \langle \zeta, \zeta' \rangle }{\ts} \right )^2 \\ 
    & = \frac{H^2 \ts^{-2 \alpha}}{48 \ts} \sum_{a = 1}^\ts \left (1 - \frac{\zeta_a \zeta_a' \langle \zeta, \zeta' \rangle }{\ts} \right )^2 \ge H^2 \ts^{-2\alpha} / 225,
\end{align*}
where we used $|\langle \zeta, \zeta' \rangle| \le \ts/2$ for any distinct $\zeta, \zeta \in \Xi$.
It implies
\begin{align}
    \Vert \bbW_{\zeta} - \bbW_{\zeta'} \Vert_{\op} \ge H \ts^{-\alpha} / 15. \label{eq: packing in operator norm}
\end{align}
Analogously, we have
\begin{align}
\Vert \bbW_{\zeta} - \bbW_{0} \Vert_{\op} \ge H \ts^{-\alpha} / 15. \label{eq: boundness of packing from 0}
\end{align}

\noindent \textbf{Step 2. Bounding $\chi^2$-divergence.} Let $\bbP_{\zeta}$ be the distribution of the matrices $\bA$ defined by the sparse graphon $\rho_n W_{\zeta}$ and $\bbP_{0}$ be defined by the sparse graphon $\rho_n W_0$. For brevity, set $\Delta = W_{\zeta} - W_0$.

Let
\begin{align*}
    L(\bA) = \frac{d \bbP_{\zeta}}{d \bbP_{0}}(\bA) = \frac{\bbP_{\zeta}(\bA)}{\bbP_0(\bA)}
\end{align*}
be the likelihood ratio between $\bbP_{\zeta}$ and $\bbP_{0}$. Then, we have
\begin{align*}
    \chi^2(\bbP_{\zeta}, \bbP_{0}) = \bbE_{\bA \sim \bbP_{0}} L^2(\bA) - 1.
\end{align*}

Let $\bbQ_{\zeta}$ be the distribution of $\bQ$ generated by the sparse graphon $\rho_n W_{\zeta}$. Then, define
\begin{align*}
    L(\bA \mid \bQ) & = \frac{\bbP_{\zeta}(\bA \mid \bQ)}{\bbP_0(\bA)} =  \prod_{1 \le i < j \le n} \frac{\bQ_{ij}^{\bA_{ij}} (1 - \bQ_{ij})^{1 - \bA_{ij}}}{(\rho_n / 2)^{\bA_{ij}} (1 - \rho_n/2)^{1 -\bA'_{ij}}} \\
    & = \prod_{1 \le i < j \le n} \frac{(1 - \bQ_{ij})}{(1 - \rho_n/2)} \cdot \prod_{1 \le i < j \le n} \left ( \frac{\bQ_{ij} (1 - \rho_n/2)}{(1 - \bQ_{ij}) (\rho_n/2)} \right )^{\bA_{ij}}.
\end{align*}
so $L(\bA) = \bbE_{\bQ \sim \bbQ_{\zeta}} L(\bA \mid \bQ)$ and
\begin{align*}
    L^2(\bA) = \bbE_{\bQ, \bQ' \sim \bbQ_{\zeta}} L(\bA \mid \bQ) L(\bA \mid \bQ)
\end{align*}
for $\bQ, \bQ'$ independently sampled from $\bbQ_{\zeta}$. Then, 
\begin{align*}
    L^2(\bA) = \bbE_{\bQ, \bQ' \sim \bbQ_{\zeta}}\prod_{1 \le i < j \le n} \frac{(1 - \bQ_{ij}) (1 - \bQ_{ij}')}{(1 - \rho_n/2)^2} \cdot \prod_{1 \le i < j \le n} \left ( \frac{\bQ_{ij} \bQ'_{ij} (1 - \rho_n/2)^2}{(1 - \bQ_{ij}) (1 - \bQ'_{ij}) (\rho_n/2)^2} \right )^{\bA_{ij}}.
\end{align*}
Then, we have
\begin{align*}
    & \chi^2(\bbP_\zeta, \bbP_0) \le \bbE_{\bA \sim \bbP_0} L^2(\bA) \\
    & \quad = \bbE_{\bQ, \bQ' \sim \bbQ_{\zeta}}\prod_{1 \le i < j \le n} \frac{(1 - \bQ_{ij}) (1 - \bQ_{ij}')}{(1 - \rho_n/2)^2} \cdot \prod_{1 \le i < j \le n} \bbE_{\bA_{ij} \sim \bbP_0} \left ( \frac{\bQ_{ij} \bQ'_{ij}  (1 - \rho_n/2)^2}{(1 - \bQ_{ij}) (1 - \bQ'_{ij}) (\rho_n/2)^2} \right )^{\bA_{ij}},
\end{align*}
where we used independence of $\bA_{ij}$ for $1 \le i \le j \le n$ under $\bbP_0$. Computing expectations w.r.t $\bA_{ij}$ explicitly, we obtain 
\begin{align*}
    & \chi^2(\bbP_\zeta, \bbP_0) \\
    & \quad \le \bbE_{\bQ, \bQ' \sim \bbQ_{\zeta}} \left ( \prod_{1 \le i < j \le n} \frac{(1 - \bQ_{ij}) (1 - \bQ_{ij}')}{(1 - \rho_n/2)^2} \prod_{1 \le i < j \le n} \left ( 1 - \rho_n /2 + \frac{\bQ_{ij} \bQ_{ij}' (1 - \rho_n/2)^2}{(1 - \bQ_{ij}) (1 - \bQ_{ij}') (\rho_n/2)} \right ) \right ) \\
    & \quad = \bbE_{\bQ, \bQ' \sim \bbQ_{\zeta}} \prod_{1 \le i < j \le n} \left ( \frac{(1 - \bQ_{ij}) (1 - \bQ_{ij}')}{ (1 - \rho_n/2)} + \frac{\bQ_{ij} \bQ_{ij}'}{(\rho_n/2)} \right ).
\end{align*}
Simplifying the latter product, we get
\begin{align*}
    \chi^2(\bbP_\zeta, \bbP_0) & \le \bbE_{\bQ, \bQ' \sim \bbQ_{\zeta}} \prod_{1 \le i < j \le n} \left (1 + \frac{(\bQ_{ij} - \rho_n/2) (\bQ'_{ij} - \rho_n/2)}{\rho_n/2(1 - \rho_n/2)} \right ) \\
    & = \bbE \prod_{1 \le i < j \le n} \left (1 + \frac{\rho_n^2 \Delta(\xi_{(i)}, \xi_{(j)})\Delta(\xi'_{(i)}, \xi'_{(j)})}{\rho_n/2(1 - \rho_n/2)} \right ),
\end{align*}
where $\xi_1', \ldots, \xi'_n$ are independent copies of $\xi_1, \ldots, \xi_n \sim \operatorname{Uniform}([0;1])$. Since $|\Delta(x, y)| \le H \ts^{-\alpha} / 2$, the above implies
\begin{align*}
    \chi^2(\bbP_{\zeta}, \bbP_0) \le \prod_{1 \le i < j \le n} \left ( 1 + H^2 \rho_n \ts^{-2 \alpha} \right ) \le \exp \left ( H^2 n (n - 1)/2 \rho_n^2 \ts^{- 2 \alpha}\right ) \le e^{H^2 n^2 \rho_n \ts^{-2 \alpha}}.
\end{align*}
Then, if $|\Xi| \ge 3\chi^2(\bbP_{\zeta}, \bbP_0)$ for any $\zeta \in \Xi$, we may apply Theorem~\ref{theorem: chi-square lower bound}.

\noindent \textbf{Step 3. Applying Theorem~\ref{theorem: chi-square lower bound}.} For $\ts \ge 17 \cdot 18$, we have
\begin{align*}
    |\Xi|/3 \ge e^{\ts / 36},
\end{align*}
so to ensure $|\Xi| \ge 3 \chi^2(\bbP_{\zeta}, \bbP_0)$ for any $\zeta \in \Xi$, it is enough to choose $\ts$ such that
\begin{align*}
    \ts^{1 + 2\alpha} \ge 36 \cdot H^2 n^2 \rho_n.
\end{align*}
Under the assumptions of the theorem, $H^2 n^2 \rho_n \ge 2^{20}$, so it is enough to set $$\ts = \lceil (36 H^2 n^2 \rho_n)^{1/(1 + 2 \alpha)} \rceil \ge 17 \cdot 18$$ for $\alpha \in (0; 1]$. The condition $\ts \ge H^{1/\alpha}$ is implied by $6 n \rho_n \ge H^{1/(2 \alpha)}$. Then, since
\begin{align*}
    \Vert \rho_n \bbW_{\zeta} - \rho_n \bbW_{\zeta'} \Vert_{\op} \ge H\rho_n \ts^{-\alpha} / 15, \\
    \Vert \rho_n \bbW_{\zeta} - \rho_n \bbW_0 \Vert_{\op} \ge H \rho_n \ts^{-\alpha} / 15 
\end{align*}
due to~\eqref{eq: packing in operator norm} and~\eqref{eq: boundness of packing from 0}, Theorem~\ref{theorem: chi-square lower bound} implies
\begin{align*}
    \inf_{\widehat{\bbW}} \sup_{\bbW \in \Sigma(H, \alpha)} \bbP_{\bA \sim \rho_n \bbW} \left ( \Vert \rho_n \bbW - \widehat{ \bbW}(\bA) \Vert_{\op} \ge H \rho_n \ts^{-\alpha} / 30 \right ) \ge \frac{1}{6}.
\end{align*}
For our choice of $\ts$ we have
\begin{align*}
    H \rho_n \ts^{-\alpha} \ge  H^{\frac{1}{1 + 2 \alpha}} \rho_n^{(1 + \alpha)/(1 + 2 \alpha)} n^{- 2 \alpha/(1 + 2 \alpha)},
\end{align*}
so
\begin{align*}
    \inf_{\widehat{\bbW}} \sup_{\bbW \in \Sigma(H, \alpha)} \bbP_{\bA \sim \rho_n \bbW} \left ( \Vert \rho_n \bbW - \widehat{ \bbW}(\bA) \Vert_{\op} \ge \frac{1}{30} \left ( \frac{H \rho_n^{1 + \alpha}}{n^{2 \alpha}} \right )^{\frac{1}{(1 + 2 \alpha)}} \right ) \ge \frac{1}{6}. & \qedhere
\end{align*}
\end{proof}

\subsubsection{Proof of Lemma~\ref{lemma: almost orthogonal vectors}}
\label{section: vector packing lemma}

\begin{proof}
    Let $\zeta_1, \ldots, \zeta_M \in \{-1, 1\}^{\ts}$ be independent random vectors, each consists of independent Rademacher random variables. Then, $\langle \zeta_i, \zeta_j \rangle$ is a zero-mean sum of $\ts$ independent Rademacher random variables, so
    \begin{align*}
        \bbP \left ( |\langle \zeta_i, \zeta_j \rangle|  \ge t \right ) \le 2 \exp \left ( - \frac{t^2}{2\ts} \right )
    \end{align*}
    for any $t > 0$ by the Hoeffding inequality. In particular, for $t = \ts/2$, we have
    \begin{align*}
        \bbP \left (  |\langle \zeta_i, \zeta_j \rangle | \ge \ts/2 \right ) \le 2 e^{- \ts/8}.
    \end{align*}
    It implies
    \begin{align*}
        \bbP \left ( \exists \text{ distinct } i, j  \text{ such that } |\langle \zeta_i, \zeta_j \rangle | \ge \ts/2 \right ) \le 2 \binom{M}{2} e^{-\ts / 8} \le M^2 e^{-\ts / 8}.
    \end{align*}
    Choosing $M = \lfloor e^{\ts/17} \rfloor $, we get
    \begin{align*}
        \bbP \left ( \exists \text{ distinct } i, j  \text{ such that } |\langle \zeta_i, \zeta_j \rangle | \ge \ts/2 \right ) < 1,
    \end{align*}
    so there exists a family of $M \ge e^{\ts/ 18}$ vectors $\zeta_1, \ldots, \zeta_M$ such that $|\langle \zeta_i, \zeta_j \rangle| \le \ts/2$ for any distinct $i, j$, provided $\ts \ge 17 \cdot 18$.
\end{proof}

\section{Proof of Theorem~\ref{theorem: analityc convergence rate}}
\label{appendix: proof of analytic convergence rate}

i) \textbf{Upper bound on the convergence rate.} We claim that Definition~\ref{condition: M-r analiticity} guarantees that $W(x, y)$ is $(H, 1)$-H\"older ($H$-Lipschitz) for some $H$. If $|x_2 - x_1| > r/2$ or $|y_1 - y_2| > r/2$, we can bound
\begin{align*}
	|W(x_1, y_1) - W(x_2, y_2)| \le 2 \le \frac{2}{r} (|x_2 - x_1| + |y_2 - y_1|).
\end{align*}
Otherwise, we have
\begin{align*}
	|W(x_1, y_1) - W(x_2, y_2)| & \le |W(x_1, y_1) - W(x_2, y_1)| + |W(x_2, y_1) - W(x_2, y_2)| \\
	& \le \sum_{i = 1}^{\infty} |c_i(x_1, y_1)| |x_2 - x_1|^i + \sum_{i = 1}^{\infty} |c_i(x_2, y_1)| |y_2 - y_1|^i.
\end{align*}
Since $\sup_{x, y} |c_k(x, y)| \le M r^{-k}$, we obtain
\begin{align*}
	|W(x_1, y_1) - W(x_2, y_2)| & \le M \left ( \sum_{i = 1}^\infty \frac{|x_2 - x_1|^i}{r^i} + \sum_{i = 1}^\infty  \frac{|y_2 - y_1|^i}{r^i} \right ) \\
	& \le M \left ( \frac{r^{-1}|x_2 - x_1|}{1 - r^{-1} |x_2 - x_1|} + \frac{r^{-1}|y_2 - y_1|}{1 - r^{-1}|y_2 - y_1|} \right ) \\
	& \le 4 r^{-1} M (|x_2 - x_1| + |y_2 - y_1|).
\end{align*}
Consequently, graphon $W$ is indeed $(H, 1)$-H\"older for $H = \max(4 M r^{-1}, 2r^{-1})$. Applying~\eqref{eq: final bound in the proof of Holder graphon rates} from the proof of Theorem~\ref{theorem: holder convergence rate}, we obtain the desired rate of convergence with probability at least $1 - \delta/2$.

ii) \textbf{Upper bound on the rank.} Choose some integer $\ell$ such that $\ell \ge 2 / r$. For $a,b\in \{1,\dots,l\}$ consider $\Delta_{a b} = [(a - 1)/\ell, a/\ell) \times [(b - 1)/\ell, b/\ell)$.  Let  $ p\geq 1$ be an integer.  For each $\Delta_{ab}$  define function
\begin{align*}
	P_p^{ab}(x, y) = \sum_{i = 0}^{p - 1} c_i \left ( \frac{a - 1/2}{\ell}, y \right ) \left (x - \frac{a - 1/2}{\ell} \right )^i.
\end{align*}
The function $P_p^{ab}$ has rank at most $p$.
Define   function $P_p: [0, 1]^2 \to \bbR$ as $P_{p}|_{\Delta_{ab}} = P^{ab}_{p}$.
Then $P_p$ has rank at most $\ell^2 p$. Moreover, we have  
\begin{align*}
	\sup_{x, y} |W(x, y) - P_p(x, y)| = \sup_{a, b} \sup_{(x, y) \in \Delta_{ab}} |W(x, y) - P_p^{ab}(x, y)| \le M \sum_{i = p}^{\infty} r^{-i} l^{-i} \le M \sum_{i = p}^{\infty} 2^{-i}  \le M 2^{1-p}.
\end{align*}
Next, define $\bP_p = (P_p(\xi_i, \xi_j))_{1 \le i, j \le n}$ and recall that $\bQ = \rho_n(W(\xi_i, \xi_j))_{1 \le i, j \le n}$. Then $\bP_p$ has the same rank as $P_p$. Indeed, if $P_p(x, y)$ can be represented as a sum $\sum_{r = 1}^{\ell^2 p} \phi_r(x) \psi_r(y)$ of functions of rank $1$, then $\bP_k$ has decomposition $\sum_{r = 1}^{\ell^2 p} \bv_r \bg_r^\top$, where $\bv_r = (\phi_r(\xi_i))_{i = 1}^n$, $\bg_r = (\psi_r(\xi_i))_{i = 1}^n$. 
We have, 
\begin{align*}
   \sigma_{\ell^2 p}(\bQ) \le \Vert \bQ - \rho_n \bP_p \Vert_{op}\le \Vert \bQ - \rho_n \bP_p \Vert_{F} \le M n \rho_n 2^{1-p}.
\end{align*}
For any $k \ge \ell^2$, by choosing $p = \left \lfloor \frac{k}{\ell^2} \right \rfloor$ we can conclude that 
\begin{align*}
	\sigma_k(\bQ) \le M n \rho_n 2^{1 - p} \leq 4 M n \rho_n 2^{-\frac{k}{\ell^2}.} 
\end{align*}
Since $\Vert \bQ \Vert_{\op} \le n \rho_n$, there exists constant $C(M, r)$ such that $\sigma_k(\bQ) \le C(M, r) n \rho_n 2^{- k/\ell^2}$ for all positive $k$. Since $\bQhat^*_1$ is obtained from $\cS_m(\bA)$ by hard-thresholding with threshold $\lambda'$ for
\begin{align*}
    m = \lceil n^{1/3} / \rho_n^{1/3} \rceil, \quad \lambda' = 2C_1 \rho_n n \cdot \left ( \frac{\log(8/\delta)}{n} \right )^{1/2}  + 2C_2\rho_n^{2/3} n^{1/3},
\end{align*}
we can apply Proposition~\ref{proposition: singular value rank}, and obtain
\begin{align*}
    \rank(\widehat{\bQ}^*_1) & \le \max \{k: \sigma_k(\bQ) > \lambda' - \Vert \cS_m(\bA) - \bQ \Vert_{\op} \} \\
    & \le \max \left \{ k : C(M, r) n \rho_n 2^{- k / \ell^2} > C_1 \rho_n n \cdot \left ( \frac{\log(8/\delta)}{n} \right )^{1/2}  + C_2\rho_n^{2/3} n^{1/3} \right \}
\end{align*}
with probability at least $1 - \delta/2$ due to Theorem~\ref{theorem: Holder averaging}. Choosing $\ell = \lceil 2 / r \rceil$, we get
\begin{align*}
	\rank(\widehat{\bQ}_\lambda) & \leq C' \left ( \log (n \rho_n) + \log \log \frac{8}{\delta} \right )
\end{align*}
for some constant $C'$ that depends on $M$ and $r$ only with probability at least $1 - \delta/2$. Together with the conclusion of the first part, the above implies the result.

\section{Proof of Theorem~\ref{theorem: analytic graphon estimation lower bound}}
\label{section: analytic graphon lower bound proof}

First, we will lower bound $\Vert \bbW^{(1)} - \bbW^{(0)} \Vert_{\op}$. Since both graphons has rank at most $2$, we have
\begin{align*}
    \Vert \bbW^{(1)} - \bbW^{(0)} \Vert_{\op} \ge \frac{1}{\sqrt{2}} \Vert \bbW^{(1)} - \bbW^{(0)} \Vert_{F}. 
\end{align*}
Then, we have
\begin{align*}
    \Vert \bbW^{(1)} - \bbW^{(0)} \Vert_{F}^2 & = \int_0^1 \int_0^1 (W^{(1)}(x, y) - W^{(0)}(x, y))^2 dx dy = \int_0^1 \int_0^1 (\tau_t(x) \tau_t(y) - xy )^2 dx dy \\
    & = \int_0^1 \int_0^1 (t x y (1 - x) + t xy (1 - y) + t^2 xy(1 -x )(1 - y))^2 dx dy \\
    & \ge t^2 \int_0^1 \int_0^1 ( x y (1 - x) +  xy (1 - y))^2 dx dy.
\end{align*}
Computing the latter integral using symbolic algebra Python library \texttt{sympy}, we get
\begin{align*}
     \Vert \rho_n \bbW^{(1)} - \rho_n \bbW^{(0)} \Vert_{\op} \ge \frac{\rho_n}{\sqrt{2}}\Vert \bbW^{(1)} - \bbW^{(0)} \Vert_{F} \ge \rho_n t \cdot \sqrt{\frac{13}{520}} \ge \rho_n t/7 = \frac{\rho_n}{28 \sqrt{n}},
\end{align*}
where we used $t = 1/(4 \sqrt{n})$ from the definition of $W^{(1)}$.

Next, note that Lemma~\ref{lemma: shifted graphon KL-divergence} implies
\begin{align}
    \KL{\bbP_{W^{(1)}}}{\bbP_{W^{(0)}}} \le 4 n t^2 = 1/4, \label{eq: KL divergence bound for smooth graphons}
\end{align}
so by Theorem~\ref{theorem:minimax bounds}, we have
\begin{align*}
    \inf_{\widehat{\bbW}} \max_{i \in \{0, 1\}} \bbP_{\bA \sim \rho_n \bbW} \left ( \Vert \rho_n \bbW^{(i)} - \widehat{\bbW}(\bA) \Vert_{\op} \ge \frac{\rho_n}{56\sqrt{n}} \right ) \ge 1/6,
\end{align*}
which concludes the proof.

\section{Proof of Lemma~\ref{lemma: regret upper bound spectral gap}}

\noindent \textbf{Step 1. Taking perturbation of $\hat{\theta}_1$ into account.} To prove the second part of the theorem, we define
\begin{align*}
    \Delta_{\cR^2} = \cR^2(\gamma, \bbW_1) - \cR^2(\gamma, \bbW_2).
\end{align*}
Then, lower bound~\eqref{eq: lower bound on T with approximate theta} can be rewritten as
\begin{align*}
    T_1(\hat{\theta}_2) \ge T_1(\hat{\theta}_1) + \frac{1}{2} \langle \Delta_{\cR^2} [\theta + \hat{\theta}_2], \theta + \hat{\theta}_2 \rangle - \frac{1}{2} \langle \Delta_{\cR^2} [\theta + \hat{\theta}_1], \theta + \hat{\theta}_1 \rangle.
\end{align*}
Then, introduce $\ttd = \hat{\theta}_2 - \hat{\theta}_1$. We have
\begin{align*}
    \langle \Delta_{\cR^2} [\theta + \hat{\theta}_2], \theta + \hat{\theta}_2 \rangle & = \langle \Delta_{\cR^2} [\theta + \hat{\theta}_1 + \ttd], \theta + \hat{\theta}_1 + \ttd \rangle \\
    & = \langle \Delta_{\cR^2} [\theta + \hat{\theta}_1], \theta + \hat{\theta}_1 \rangle + 2 \langle \Delta_{\cR^2} [\theta + \hat{\theta}_1], \ttd \rangle + \langle \Delta_{\cR^2} [\ttd], \ttd \rangle,
\end{align*}
which implies
\begin{align*}
    T_1(\hat{\theta}_2) & \ge T_1(\hat{\theta}_1) + \langle \Delta_{\cR^2} [\theta + \hat{\theta}_1], \ttd \rangle + \frac{1}{2} \langle \Delta_{\cR^2} [\ttd], \ttd \rangle \\
    & \ge T_1(\hat{\theta}_1) - \Vert \Delta_{\cR^2} \Vert_{\op} \cdot \Vert \theta + \hat{\theta}_1 \Vert_{L_2} \cdot \Vert \ttd \Vert_{L_2} - \frac{1}{2} \Vert \Delta_{\cR^2} \Vert_{\op} \cdot \Vert \ttd \Vert_{L_2}^2.
\end{align*}
We simplify the above by using the bound $\Vert \hat{\theta}_i \Vert_{L_2}^2 \le B$, $i = 1, 2$,so $\Vert \ttd \Vert_{L_2} \le 2\sqrt{B}$ and $\Vert \theta + \hat{\theta}_1 \Vert_{L_2} \le (\Vert \theta \Vert_{L_2} + \sqrt{B})$, so we have
\begin{align*}
    T_1(\hat{\theta}_2) \ge T_1(\hat{\theta}_1) -  \Vert \Delta_{\cR^2} \Vert_{\op} \cdot \Vert \ttd \Vert_{L_2} \cdot (\Vert \theta \Vert_{L_2} + 2 \sqrt{B}).
\end{align*}
Then, applying the bound~\eqref{eq: resolvent different upper bound} on $\Vert \Delta_{\cR^2} \Vert_{\op}$, we obtain
\begin{align}
\label{eq: perturbation bound with theta difference}
    T_1(\hat{\theta}_2) \ge T_1(\hat{\theta}_1) - \frac{2 \gamma \Vert \bbW_1 - \bbW_2 \Vert_{\op} \cdot \Vert \ttd \Vert_{L_2} \cdot (\Vert \theta \Vert_{L_2} + 2 \sqrt{B})}{(1 - \gamma [\Vert \bbW_1 \Vert_{\op} \vee \Vert \bbW_2 \Vert_{\op}])^3}.
\end{align}
Then, we bound $\Vert \ttd \Vert_{L_2}$ under the spectral gap assumption, and using $\theta(x) \equiv 0$.

\medskip

\noindent \textbf{Step 2. Bounding $\Vert \hat{\theta}_2 - \hat{\theta}_1 \Vert_{L_2}$ under the spectral gap assumption.} When $\theta(x) \equiv 0$, the optimal intervention $\hat{\theta}_i$ are eigenfunctions of the operator $\bbW_i$ corresponding to its largest eigenvalue, $i = 1, 2$. We may choose them in a way such that $\langle \hat{\theta}_1, \hat{\theta}_2 \rangle \ge 0$. We use the following lemma, which can be seen as a variant of the Davis--Kahan theorem~\citep{davis1970rotation} for integral operators.

\begin{lemma}[Lemma 7 of~\citep{avella-medina_centrality_2018}]
\label{lemma: operator Davis-Kahan}
    Consider two linear integral self-adjoint operators $\bbW_1, \bbW_2$ with symmetric bounded kernels $W_1, W_2 \in L_2([0, 1]^2)$. Let $\lambda_1(\bbW_i)$ be the largest eigenvalue of $\bbW_i$, and $u_i(\cdot)$ be the corresponding unit eigenfunction, $i = 1, 2$ such that $\langle u_1, u_2 \rangle \ge 0$. If $\lambda_1(\bbW_1) - \lambda_2(\bbW_1) > |\lambda_1(\bbW_1) - \lambda_1(\bbW_2)|$, then
    \begin{align*}
        \Vert u_1 - u_2 \Vert_{L_2} \le \frac{\sqrt{2} \Vert \bbW_1 - \bbW_2 \Vert_{\op}}{\lambda_1(\bbW_1) - \lambda_2(\bbW_1) - |\lambda_1(\bbW_1) - \lambda_1(\bbW_2)|}.
    \end{align*}
\end{lemma}

Noting that
\begin{align*}
    \lambda_1(\bbW_1) - \lambda_1(\bbW_2) & = \langle \bbW_1[u_1], u_1 \rangle - \langle \bbW_2[u_2], u_2 \rangle \\
    & \le \langle \bbW_1[u_1], u_1 \rangle - \langle \bbW_2[u_1], u_1 \rangle = \langle (\bbW_1 - \bbW_2)[u_1], u_1 \rangle \le \Vert \bbW_1 - \bbW_2 \Vert_{\op},
\end{align*}
and
\begin{align*}
    \lambda_1(\bbW_2) - \lambda_1(\bbW_1) \le \langle \bbW_2[u_2], u_2 \rangle - \langle \bbW_1[u_2], u_2 \rangle = \langle (\bbW_2 - \bbW_1)[u_2], u_2 \rangle \le \Vert \bbW_1 - \bbW_2 \Vert_{\op},
\end{align*}
we deduce that $| \lambda_1(\bbW_1) - \lambda_1(\bbW_2)| \le \Vert\bbW_1 - \bbW_2 \Vert_{\op}$. Then, we consider two cases. If $\ttg \ge 2 \Vert \bbW_1 - \bbW_2 \Vert_{\op}$, then $\lambda_1(\bbW_1) - \lambda_2(\bbW_1) > |\lambda_1(\bbW_1) - \lambda_1(\bbW_2)|$, and we can apply Lemma~\ref{lemma: operator Davis-Kahan} to obtain
\begin{align*}
    \frac{1}{\sqrt{B}} \Vert \hat{\theta}_2 - \hat{\theta}_1 \Vert_{L_2} \le \frac{\sqrt{2} \Vert \bbW_1 - \bbW_2 \Vert_{\op}}{\ttg - \Vert \bbW_1 - \bbW_2 \Vert_{\op}} \le \frac{2 \sqrt{2} \Vert \bbW_1 - \bbW_2 \Vert_{\op}}{\ttg}.
\end{align*}
If $\ttg < 2 \Vert \bbW_1 - \bbW_2 \Vert_{\op}$, then we have 
\begin{align*}
    \frac{1}{\sqrt{B}} \Vert \hat{\theta}_2 - \hat{\theta}_1 \Vert_{L_2} \le 2 \le \frac{4\Vert \bbW_1 - \bbW_2 \Vert_{\op}}{\ttg}.
\end{align*}
In either case, we have $\Vert \ttd \Vert_{L_2} \le \frac{4 \sqrt{B} \Vert \bbW_1 - \bbW_2 \Vert_{\op}}{\ttg}$, which implies
\begin{align*}
    T_1(\hat{\theta}_2) \ge T_1(\hat{\theta}_1) - \frac{8 \gamma \sqrt{B} (\Vert \theta \Vert_{L_2} + 2 \sqrt{B}) \cdot \Vert \bbW_1 - \bbW_2 \Vert_{\op}^2}{\ttg (1 - \gamma [\Vert \bbW_1 \Vert_{\op} \vee \Vert \bbW_  2 \Vert_{\op}])^3}
\end{align*}
due to~\eqref{eq: perturbation bound with theta difference}.

\section{Proof of Theorem~\ref{theorem: spectral norms deficit}}
\label{appendix: proof of deficit lower bound theorem}

Let $\bj^*$ be uniformly distributed on $\{0, 1\}$. For a matrix $\bA$ sampled from the sparse graphon model $\rho_n W_{\bj^*}$ and an estimator $\widehat{\theta}(\bA)$ define
\begin{align*}
    \widehat{j} = \argmin_{j \in \{0, 1\}} \regret_j (\widehat{\theta}(\bA)),
\end{align*}
breaking ties by setting $\widehat{j} = 0$.
If $\widehat{j} \neq \bj^*$, we have
\begin{align}
\label{eq: missclassification regret}
    \regret_{\bj^*}(\widehat{\theta}) \ge \regret_{1 - \bj^*}(\widehat{\theta}).
\end{align}
Meanwhile, for the special case when $\theta(x) \equiv 0$, we have
\begin{align*}
    \regret_0(\widehat{\theta}) + \regret_1(\widehat{\theta}) = T_{(\rho_n W_0, 0)}(\theta^*_1) + T_{(\rho_n W_1, 0)}(\theta^*_2) - T_{(\rho_n W_0, 0)}(\widehat{\theta}) - T_{(\rho_n W_1, 0)}(\widehat{\theta}),
\end{align*}
where $\theta^*_j$ is a maximizer of $T_{(\rho_n W_j, 0)}(\cdot)$. Using
\begin{align*}
    T_{(\rho_n W_j, 0)}(\tilde{\theta}) = \Vert (\bI - \gamma \rho_n \bbW_j)^{-1} [\tilde{\theta}] \Vert_{2}^2,
\end{align*}
we deduce that $\theta^*_j$ is the leading eigenfunction of $\bbW_j$, so $T_{(\rho_n W_j, 0)}(\theta^*_j) = B \Vert (\bI - \gamma \rho_n \bbW_j)^{-2} \Vert_{\op}^2$. Hence, we have
\begin{align*}
    \regret_0(\widehat{\theta}) + \regret_1(\widehat{\theta}) & \ge B \Vert (\bI - \gamma \rho_n \bbW_0)^{-2} \Vert_{\op} + B \Vert (\bI - \gamma \rho_n \bbW_1)^{-2} \Vert_{\op} \\
    & \qquad - \langle \widehat{\theta}, \left ( (\bI - \gamma \rho_n \bbW_0)^{-2} + (\bI - \gamma \rho_n \bbW_1)^{-2} \right ) [\widehat{\theta}] \rangle \\
    & \ge B \Vert (\bI - \gamma \rho_n \bbW_0)^{-2} \Vert_{\op} + B \Vert (\bI - \gamma \rho_n \bbW_1)^{-2} \Vert_{\op} \\
    & \qquad - B \Vert (\bI - \gamma \rho_n \bbW_0)^{-2} + (\bI - \gamma \rho_n \bbW_1)^{-2}  \Vert_{\op} \\
    & \ge B s.
\end{align*}
Together with~\eqref{eq: missclassification regret}, it implies
\begin{align*}
    2 \regret_{\bj^*}(\widehat{\theta}) \ge \regret_0(\widehat{\theta}) + \regret_1(\widehat{\theta}) \ge B s,
\end{align*}
provided $\widehat{j} \neq \bj^*$. Therefore, we have
\begin{align*}
    \bbP \left (\regret_{\bj^*}(\widehat{\theta}) \ge B s/ 2 \right ) \ge \bbP (\widehat{j} \neq \bj^*).
\end{align*}
The latter is bounded from below by Theorem~\ref{theorem: fano's lemma}, so
\begin{align*}
    \max_{j = 0, 1} \bbP \left (\regret_{j}(\widehat{\theta}) \ge B s/ 2 \right ) \ge \bbP \left (\regret_{\bj^*}(\widehat{\theta}) \ge B s/ 2 \right ) \ge \max \left \{ e^{-\alpha}/4, \frac{1 - \sqrt{\alpha}}{2}\right \}.
\end{align*}
Since the latter holds for any estimator $\widehat{\theta}$, the theorem follows.

\section{Proof of Proposition~\ref{proposition: graphons power upper bound}}
\label{section: graphons deficit power lower bound proof}

Since operators $\bbW_0, \bbW_1$ are  Hilbert–Schmidt, we have
    \begin{align*}
        \Vert (\bbI - \gamma \rho_n \bbW_i)^{-2} \Vert_{\op} = (1 - \gamma \rho_n \lambda_{\max}(\bbW_i))^{-2} = \sum_{k = 0}^{\infty} (k + 1) (\gamma \rho_n \lambda_{\max}(\bbW_i))^k
    \end{align*}
    for each $i = 0,1$. On the other hand, we have
    \begin{align*}
        \Vert (\bbI - \gamma \rho_n \bbW_0)^{-2} + (\bbI - \gamma \rho_n \bbW_1)^{-2} \Vert_{\op} & = \left \Vert \sum_{k = 0}^{\infty} (k + 1) (\gamma \rho_n)^k (\bbW_0^k + \bbW_1^k) \right \Vert_{\op} \\
        & \le \sum_{k = 0}^{\infty} (k + 1) (\gamma \rho_n)^k \Vert \bbW_0^k + \bbW_1^k \Vert_{\op}.
    \end{align*}
    Therefore, we have
    \begin{align*}
        & \Vert (\bbI - \gamma \rho_n \bbW_0)^{-2} \Vert_{\op} +  \Vert (\bbI - \gamma \rho_n \bbW_1)^{-2} \Vert_{\op} - \Vert (\bbI - \gamma \rho_n \bbW_0)^{-2} + (\bbI - \gamma \rho_n \bbW_1)^{-2} \Vert_{\op} \\
        & \qquad \ge \sum_{k = 0}^{\infty} (k + 1) \gamma^k \rho_n^k (\lambda_{\max} (\bbW_0)^k + \lambda_{\max} (\bbW_1)^k - \Vert \bbW_0^k + \bbW_1^k \Vert_{\op})  \\
        & \qquad \ge (\gamma \rho_n)^{k_0} s', 
    \end{align*}
    which concludes the proof.

\section{Proof of Theorem~\ref{theorem: regret lower bound smooth graphons}}

    In the proof, we choose graphons $\bbW^{(0)}$, $\bbW^{(1)}$ from the statement of Theorem~\ref{theorem: analytic graphon estimation lower bound}.

   \noindent \textbf{Step 1. Reduction to matrices.} Note that images of $\bbW^{(i)}$, $i = 0, 1$, belong to 
   \begin{align*}
        \mathfrak{I} = \mathrm{span}(1, x, \tau_t(x)) = \mathrm{span}(1, x, x + t x (1 - x)) = \mathrm{span}(1, x, x^2).
   \end{align*}
   We will choose the following orthonormal basis in $\mathfrak{I}$:
   \begin{align*}
        e_1(x) & = 1 \\
        e_2(x) & = \sqrt{3} (2x - 1) \\
        e_3(x) & = \sqrt{5}(6 x^2 - 6x + 1).
   \end{align*}
   Since $\bbW^{(i)}$ are self-adjoint, we have $\mathfrak{J}^{\perp} \subset \mathrm{Ker}(\bbW^{(i)})$, so, if maximum eigenvalues of $\bbW^{(i)}$ are positive, they coincide with maximum eigenvalues of matrices
   \begin{align*}
    \ttW_0 = \left ( \int_{0}^1 \int_{0}^{1} W^{0}(x, y) e_i(x) e_j(y) dx dy \right )_{i = 1, j = 1}^{3, 3} = \begin{pmatrix}
        \frac{5}{8} & \frac{\sqrt{3}}{24} & 0 \\
        \frac{\sqrt{3}}{24} & \frac{1}{24} & 0 \\
        0 & 0  & 0
    \end{pmatrix}
   \end{align*}
   and
   \begin{align}
        \ttW_t & = \left ( \int_{0}^1 \int_{0}^{1} W^{(1)}(x, y) e_i(x) e_j(y) dx dy \right )_{i = 1, j = 1}^{3, 3} \nonumber \\
        & = \begin{pmatrix}
        \frac{5}{8} & \frac{\sqrt{3}}{24} & 0 \\
        \frac{\sqrt{3}}{24} & \frac{1}{24} & 0 \\
        0 & 0  & 0
       \end{pmatrix} + t \begin{pmatrix}
        \frac{1}{12} & \frac{\sqrt{3}}{72} & - \frac{\sqrt{5}}{120} \\
        \frac{\sqrt{3}}{72} & 0 & - \frac{\sqrt{15}}{360} \\
        - \frac{\sqrt{5}}{120} & - \frac{\sqrt{15}}{360} & 0 
       \end{pmatrix} + t^2 \begin{pmatrix}
        \frac{1}{72} & 0 & - \frac{\sqrt{5}}{360} \\
        0 & 0 & 0 \\
        - \frac{\sqrt{5}}{360} & 0 & \frac{1}{360}
       \end{pmatrix} \nonumber \\
       & := \ttW_0 + t \cdot \ttD_1 + t^2 \cdot \ttD_2 \label{eq: matrix polynom coefficients}
   \end{align}
   It will be more convenient to study eigenvalues of $\ttW_t$ in terms of $t$ instead of $n = (16 t^2)^{-1}$, assuming $t$ is smaller than some absolute constant. Using symbolic algebra Python library \texttt{sympy}, we compute eigenvalues of $\ttW_0$ explicitly:
   \begin{align}
    \label{eq: eigenvalues of W0}
        \lambda_{1}(\ttW_0) = \frac{\sqrt{13}}{12} + \frac{1}{3} > 0, \quad \lambda_2(\ttW_0) = \frac{1}{3} - \frac{\sqrt{13}}{12}, \quad \lambda_{3}(\ttW_0) = 0,
   \end{align}
   so $\lambda_{1}(\ttW_0) = \lambda_{\max}(\bbW^{(0)})$.
   To locate eigenvalues of $\ttW_t$, we bound norms matrices $\ttD_1, \ttD_2$ defined by~\eqref{eq: matrix polynom coefficients}. We have $\Vert \ttD_1\Vert_{\op} \le 1$ and $\Vert \ttD_2 \Vert_{\op} \le 1$. Since $t \le 1/12$, we have
   \begin{align*}
    |\lambda_i(\ttW_0) - \lambda_i (\ttW_t)| \le t \Vert \ttD_1 \Vert_{\op} + t^2 \Vert \ttD_2 \Vert_{\op} \le \frac{1}{6},
   \end{align*}
   so
   \begin{align*}
        \lambda_1(\ttW_t) \in \left [\frac{1}{6} + \frac{\sqrt{13}}{12}; \frac{1}{2} + \frac{\sqrt{13}}{12} \right ], \quad \lambda_2(\ttW_t) \in \left [ \frac{1}{6} - \frac{\sqrt{13}}{12} ; \frac{1}{2} - \frac{\sqrt{13}}{12} \right ], \quad \lambda_3(\ttW_t) \in \left [- \frac{1}{6}; \frac{1}{6} \right ].
   \end{align*}
   In particular, $\lambda_1(\ttW_t) = \lambda_{\max}(\bbW^{(1)})$ and $\lambda_1(\ttW_t) - \lambda_2(\ttW_t) \ge \sqrt{13}/6 - 1/3$ for $t \in [0; 0.5]$.

   For $z \in [0, t]$, define
   \begin{align*}
        f(z) & = \lambda_{\max}(\ttW_0) + \lambda_{\max}(\ttW_0 + z \ttD_1 + z^2 \ttD_2) - \Vert 2 \ttW_0 + z \ttD_1 + z^2 \ttD_2 \Vert_{\op} \\
        & = \lambda_{\max}(\ttW_0) + \lambda_{\max}(\ttW_0 + z \ttD_1 + z^2 \ttD_2) - \lambda_{\max}(2\ttW_0 + z \ttD_1 + z^2 \ttD_2)
   \end{align*}
   so any lower bound $f(t)$ provides a minimax lower bound for regret through Theorem~\ref{theorem: spectral norms deficit} and Proposition~\ref{proposition: graphons power upper bound}.

   Consider the characteristic polynomials
   \begin{align*}
        p_1(z, \lambda) & = \det (\ttW_0 + z \ttD_1 + z^2 \ttD_2 - \lambda \bI_3), \\
        p_2(z, \lambda) & = \det( 2 \ttW_0 + z \ttD_1 + z^2 \ttD_2 - \lambda \bI_3).
   \end{align*} 
   Analogously to the analysis of $\lambda_{\max}(\ttW_t)$, $\lambda_{\max}(\ttW_0 + z \ttD_1 + z^2 \ttD_2)$ is a simple root of $p_1(z, \lambda)$ for $z \in [0, 0.5]$. Hence, we have
   \begin{align*}
    (\partial_{\lambda} p_1)(z, \lambda) \bigg |_{\lambda = \lambda_{\max}(\ttW_0 + z \ttD_1 + z^2 \ttD_2)} \neq 0.
   \end{align*}
   Analogously, we have
   \begin{align*}
        (\partial_{\lambda} p_2)(z, \lambda) \bigg |_{\lambda = \lambda_{\max}(2\ttW_0 + z \ttD_1 + z^2 \ttD_2)} \neq 0.
   \end{align*}
   Then, by the implicit function theorem, we have
   \begin{align}
        \frac{d}{dz} \lambda(\ttW_0 + z \ttD_1 + z^2 \ttD_2)&  = - \frac{\partial_z p_1(z, \lambda)}{\partial_{\lambda} p_1(z, \lambda)} \bigg |_{\lambda = \lambda_{\max}(\ttW_0 + z \ttD_1 + z^2 \ttD_2)}, \label{eq: first lambda derivative}\\ 
        \frac{d}{dz} \lambda(2\ttW_0 + z \ttD_1 + z^2 \ttD_2)&  = - \frac{\partial_z p_2(z, \lambda)}{\partial_{\lambda} p_2(z, \lambda)} \bigg |_{\lambda = \lambda_{\max}(2\ttW_0 + z \ttD_1 + z^2 \ttD_2)}. \label{eq: second lambda derivative}
   \end{align}
   It implies
   \begin{align*}
        f'(z) = - \frac{\partial_z p_1(z, \lambda)}{\partial_{\lambda} p_1(z, \lambda)} \bigg |_{\lambda = \lambda_{\max}(\ttW_0 + z \ttD_1 + z^2 \ttD_2)} + \frac{\partial_z p_2(z, \lambda)}{\partial_{\lambda} p_2(z, \lambda)} \bigg |_{\lambda = \lambda_{\max}(2 \ttW_0 + z \ttD_1 + z^2 \ttD_2)}.
   \end{align*}
   In particular, we have
   \begin{align*}
        f'(0) & = \frac{(\partial_z p_2)(0, 2\lambda_0)}{(\partial_{\lambda}p_2)(0, 2\lambda_0)} - \frac{(\partial_z p_1)(0, \lambda_0)}{(\partial_{\lambda}p_1)(0, \lambda_0)} = 0,
   \end{align*}
   which we simplified using \texttt{sympy} again. 

   Differentiating $f'(z)$, we obtain
   \begin{align*}
        \frac{d}{dz} f'(z) & = \frac{\partial_{\lambda z} p_1(z, \lambda)}{(\partial_\lambda p_1(z, \lambda))^2} - \frac{\partial_{zz} p_1(z, \lambda)}{\partial_{\lambda} p_1(z, \lambda)} \bigg |_{\lambda = \lambda_{\max}(\ttW_0 + z \ttD_1 + z^2 \ttD_2)} \\
        & \quad + \frac{\partial_z p_1(z, \lambda) \partial_{\lambda \lambda} p_1(z, \lambda)}{(\partial_\lambda p_1(z, \lambda))^2} \bigg |_{\lambda = \lambda_{\max}(\ttW_0 + z \ttD_1 + z^2 \ttD_2)} \cdot \frac{d}{dz} \lambda_{\max}(\ttW_0 + z \ttD_1 + z^2 \ttD_2) \\
        & \quad + \frac{\partial_{\lambda z} p_2(z, \lambda)}{(\partial_\lambda p_2(z, \lambda))^2} - \frac{\partial_{zz} p_2(z, \lambda)}{\partial_{\lambda} p_2(z, \lambda)} \bigg |_{\lambda = \lambda_{\max}(2\ttW_0 + z \ttD_1 + z^2 \ttD_2)} \\
        & \quad + \frac{\partial_z p_2(z, \lambda) \partial_{\lambda \lambda} p_2(z, \lambda)}{(\partial_\lambda p_2(z, \lambda))^2} \bigg |_{\lambda = \lambda_{\max}(2\ttW_0 + z \ttD_1 + z^2 \ttD_2)} \cdot \frac{d}{dz} \lambda_{\max}(2\ttW_0 + z \ttD_1 + z^2 \ttD_2) 
   \end{align*}
   
   Replacing the derivatives of eigenvalues with~\eqref{eq: first lambda derivative} and~\eqref{eq: second lambda derivative}, and setting $z = 0$, we obtain
   \begin{align*}
    f''(0) = \frac{1}{720} - \frac{\sqrt{13}}{760} > 9 \cdot 10^{-4}.
   \end{align*}
   Since $f''(z)$ is continuous due continuity of characteristic polynomials and eigenvalues as functions of $z$, we have
   \begin{align*}
        f''(z) \ge 8 \cdot 10^{-4}
   \end{align*}
   for any $z \in [0; z_0]$ for some absolute number $z_0$. Therefore, provided $t = 1/(4 \sqrt{n}) \le z_0$, we have
   \begin{align*}
    f(t) = f(0) + t f'(0) + t^2/2 f''(\tilde{z}_t)
   \end{align*}
   for some $\tilde{z}_t \in [0; t]$, so
   \begin{align}
    \label{eq: deficit lower bound}
        \lambda_{\max}(\ttW_0) + \lambda_{\max}(\ttW_0 + z \ttD_1 + z^2 \ttD_2) - \Vert 2 \ttW_0 + z \ttD_1 + z^2 \ttD_2 \Vert_{\op} \ge 4 t^2 \cdot 10^{-4} = \frac{10^{-4}}{4n}.
   \end{align}
   Let $\bbP_{i}$ be the distribution of $\bA$ sampled from the sparse graphon model~\eqref{eq: Q sampling}-\eqref{eq: A sampling} for the sparse graphon $\rho_n W^{(i)}$. Then, by the lines of the proof of Theorem~\ref{theorem: analytic graphon estimation lower bound}, we have $\KL{\bbP_1}{\bbP_0}) \le 1/4$, see~\eqref{eq: KL divergence bound for smooth graphons}.

   Then, Proposition~\ref{proposition: graphons power upper bound} and Theorem~\ref{theorem: spectral norms deficit} lead to the lower bound
   \begin{align*}
     \inf_{\widehat{\theta}} \max_{i = 0, 1} \bbP_{\bA \sim \rho_n W^{(i)}} \left ( \regret_i(\widehat{\theta}) \ge \frac{\rho_n}{4 \cdot 10^4 \cdot n} \right ) \ge \frac{1}{6}.
   \end{align*}

\section{Computational aspects of LQ games}
\label{section: computational aspects}

\begin{lemma}
\label{lemma: computational aspects}
For a network $\bA$ of size $n$ and heterogeneities $\btheta$, let $G(\bA, \btheta)$ be a network LQ game with peer-effect parameter $\gamma$ and budget $B$. Suppose that $\btheta$ has non-zero projection on the eigenspace of the largest eigenvalue $\lambda_1(\bA)$ of $\bA$. Then, there exists a number $L \in \left (-\infty; (1 - \gamma \Vert \bA \Vert_{\op} / n)^{-2} \right )$, such that the optimal interventions can be computed as follows:
\begin{align*}
    \widehat{\btheta} = - \left [\bI + L (\bI - \gamma \bA / n)^2 \right ]^{-1} \btheta, 
\end{align*}
and $\Vert \widehat{\btheta} \Vert^2 = B$.
\end{lemma}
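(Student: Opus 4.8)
The plan is to reduce the social welfare problem for $G(\bA,\btheta)$ to a finite-dimensional quadratic maximization over a Euclidean ball and then apply the optimality conditions for that problem. First, arguing as in the proof of Theorem~\ref{theorem: graphon perturbed welfare problem}, applied to the empirical graphon $W_{\bA}$ of the network (whose integral operator acts on step functions as the matrix $\bA/n$ by Lemma~\ref{lemma: empirical graphon norm}), one shows that, under the standing assumption $\gamma\|\bA\|_{\op}/n<1$ (needed for the Nash equilibrium to be well defined), maximizing the welfare objective $T_{(\bA,\btheta)}$ over $\{\hat\btheta:\|\hat\btheta\|_2^2\le B\}$ is equivalent, up to a positive multiplicative constant, to maximizing $\tfrac12(\btheta+\hat\btheta)^{\top}\bM^2(\btheta+\hat\btheta)$ over the same ball, where $\bM:=(\bI-\gamma\bA/n)^{-1}$ is symmetric positive definite; hence $\bM^2$ is too and we are maximizing a convex quadratic over a ball.

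Second, I would invoke the standard necessary-and-sufficient optimality conditions for quadratic optimization over a ball (the trust-region characterization, applied to $\min(-T_{(\bA,\btheta)})$): a maximizer $\hat\btheta$ exists by compactness, and there is a multiplier $\kappa$ with $(\kappa\bI-\bM^2)\hat\btheta=\bM^2\btheta$ and $\kappa\bI-\bM^2\succeq0$, i.e.\ $\kappa\ge\lambda_{\max}(\bM^2)$; since $\lambda_{\max}(\bM^2)>0$, complementarity forces the budget constraint to be active, $\|\hat\btheta\|_2^2=B$. Because $\bA$ is symmetric and nonnegative, Perron--Frobenius gives $\lambda_1(\bA)=\|\bA\|_{\op}$, so $\lambda_{\max}(\bM^2)=(1-\gamma\|\bA\|_{\op}/n)^{-2}$ and the eigenspace of $\lambda_{\max}(\bM^2)$ is exactly the eigenspace of $\lambda_1(\bA)$. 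Here the non-degeneracy hypothesis enters: if $\kappa=\lambda_{\max}(\bM^2)$, solvability of $(\kappa\bI-\bM^2)\hat\btheta=\bM^2\btheta$ would force $\bM^2\btheta$ --- hence $\btheta$, as $\bM^2$ is invertible --- to be orthogonal to the eigenspace of $\lambda_1(\bA)$, contradicting the assumption. Therefore $\kappa>\lambda_{\max}(\bM^2)$, the matrix $\kappa\bI-\bM^2$ is invertible, and $\hat\btheta=(\kappa\bI-\bM^2)^{-1}\bM^2\btheta$.

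Third comes the algebra. With $\bm{N}:=\bM^{-2}=(\bI-\gamma\bA/n)^{2}$, and using that $\bm{N}$ and $\bM^2$ commute (both are polynomials in $\bA$), one checks $(\kappa\bI-\bM^2)^{-1}\bM^2=(\kappa\bm{N}-\bI)^{-1}$, so $\hat\btheta=(\kappa\bm{N}-\bI)^{-1}\btheta=-[\bI+L\bm{N}]^{-1}\btheta$ with $L:=-\kappa$. Since $\kappa>(1-\gamma\|\bA\|_{\op}/n)^{-2}$ we get $L<-(1-\gamma\|\bA\|_{\op}/n)^{-2}$, i.e.\ $L\in(-\infty,-(1-\gamma\|\bA\|_{\op}/n)^{-2})$, and $\|\hat\btheta\|_2^2=B$ as established. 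To exhibit $L$ explicitly (and confirm its existence and uniqueness directly) one may alternatively expand, in the eigenbasis $\{\bp_i\}$ of $\bA$ with $\nu_i=(1-\gamma\lambda_i(\bA)/n)^2$ and $\btheta=\sum_i t_i\bp_i$, the scalar function $g(\kappa):=\|(\kappa\bm{N}-\bI)^{-1}\btheta\|_2^2=\sum_i t_i^2/(\kappa\nu_i-1)^2$, and note that $g$ is continuous and strictly decreasing on $(\lambda_{\max}(\bM^2),\infty)$ with $g(\kappa)\to+\infty$ as $\kappa\downarrow\lambda_{\max}(\bM^2)$ (this uses $t_i\neq0$ for the $i$ with $\lambda_i(\bA)=\lambda_1(\bA)$) and $g(\kappa)\to0$ as $\kappa\to+\infty$, so $g(\kappa)=B$ has a unique root $\kappa^\ast$.

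The step I expect to be the main obstacle is the second one: correctly formulating and justifying the optimality conditions for a \emph{convex} quadratic being \emph{maximized} over a ball --- in particular the semidefiniteness $\kappa\bI-\bM^2\succeq0$ at the optimum --- and pinpointing exactly where the non-degeneracy assumption rules out the degenerate ``hard case'' $\kappa=\lambda_{\max}(\bM^2)$, in which the maximizer is not given by the clean resolvent formula and requires the separate treatment alluded to in the Remark on computational cost.
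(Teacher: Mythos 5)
Your proposal is correct, and its skeleton is the same as the paper's: restate the welfare problem (via the Parise--Ozdaglar reformulation) as maximizing $\tfrac12(\btheta+\hat\btheta)^{\top}(\bI-\gamma\bA/n)^{-2}(\btheta+\hat\btheta)$ over the Euclidean ball of radius $\sqrt{B}$, and then read off the solution from the characterization of quadratic optimization over a ball. The difference is one of self-containedness: the paper treats that characterization as a black box, citing the trust-region literature (Forsythe, Gander, Mor\'e) for the statement that the minimizer of $x^{\top}\bC x-2\bb^{\top}x$ over $\Vert x\Vert_2^2\le\beta$ has the form $(\bC-L\bI)^{-1}\bb$ with $L<\lambda_{\min}(\bC)$ when $\bb$ has nonzero projection on the bottom eigenspace, whereas you re-derive it: global optimality conditions, complementarity forcing $\Vert\hat\btheta\Vert_2^2=B$, exclusion of the hard case $\kappa=\lambda_{\max}((\bI-\gamma\bA/n)^{-2})$ via the non-degeneracy hypothesis, and existence/uniqueness of the multiplier through the secular function $g(\kappa)=\sum_i t_i^2/(\kappa\nu_i-1)^2$. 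You also make explicit two points the paper glosses over: that $\lambda_1(\bA)=\Vert\bA\Vert_{\op}$ (symmetry plus nonnegativity), and that nonzero projection of $\btheta$ onto the $\lambda_1(\bA)$-eigenspace transfers to $\bb=(\bI-\gamma\bA/n)^{-2}\btheta$; on the latter, your parenthetical ``as $\bM^2$ is invertible'' is slightly too quick --- the correct one-line reason is that the eigenspace is invariant under $(\bI-\gamma\bA/n)^{-2}$, which acts on it as the positive scalar $\lambda_{\max}$, so the projections of $\btheta$ and of $\bb$ vanish simultaneously. Finally, your localization $L\in\left(-\infty,\,-(1-\gamma\Vert\bA\Vert_{\op}/n)^{-2}\right)$ agrees with the formula \eqref{eq: hat theta equation} in the main text and is sharper than the interval printed in the lemma statement (which appears to drop a minus sign); since your interval is contained in the printed one, your argument proves the statement as written.
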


\begin{proof}
    The optimal intervention problem can be restated as follows:
    \begin{align*}
        \max \Vert (\bI - \gamma \bA / n)^{-1} (\btheta + \widehat{\btheta}) \Vert^2 \\
        \text{subject to } \Vert \widehat{\btheta} \Vert^2 \le B,
    \end{align*}
    see the proof of Proposition 1 in~\citep{Parise_Ozdaglar_Econometrica}. Expanding the squared norm, we obtain the following equivalent problem
    \begin{align*}
        \min_{\widehat{\btheta}} \quad \widehat{\btheta}^\top \left [ - (\bI - \gamma \bA / n)^{-2}\right ] \widehat{\btheta} - 2 \widehat{\btheta}^\top (\bI - \gamma \bA/n)^{-2} \btheta, \\
        \text{ subject to } \Vert \widehat{\btheta} \Vert^2_2 \le B.
    \end{align*}
    A problem of the form
    \begin{align*}
        \min_{x} x^\top \bC x - 2 \bb^\top x,  \quad \text{subject to } \Vert x \Vert^2_2 \le \beta,
    \end{align*}
    where $\bC$, $\bb$ and $\beta$ are arbitrary matrix, vector and positive scalar respectively,
    was widely studied in optimization literature, see, for example, papers~\citep{opt_forsythe1965stationary, gander1989constrained, opt_more2006levenberg}. It admits solution in the following form:
    \begin{align*}
        x = (\bC - L \bI)^{-1} \bb, \quad \Vert x \Vert^2_2 = \beta,
    \end{align*}
    where $L \in (-\infty; \lambda_{\min}(\bC))$, assuming $\bb$ has non-zero projection on the eigenspace of $\lambda_{\min}(\bC)$. In our case, $\bC = - (\bI - \gamma \bA / n)^{-2}$ and $\bb = (\bI - \gamma \bA/n)^{-2} \btheta$. Simplifying, we obtain
    \begin{align*}
        \widehat{\btheta} = - \left [ (\bI + L(\bI - \gamma \bA /n)^2 \right ]^{-1} \btheta, \quad \Vert \widehat{\btheta} \Vert^2_2 = B,
    \end{align*}
    where $L \in \left (-\infty; (1 - \gamma \Vert \bA \Vert_{\op} / n)^{-2} \right)$.
\end{proof}

\section{Proof of Lemma \ref{lemma: empirical graphon norm}}
\label{appendix: proof of empirical graphon lemma}

For $f, g \in [0, 1]$ such that $\Vert f \Vert_{L_2} = \Vert g \Vert_2 = 1$ consider
\begin{align*}
	\langle g, \bbW_{\bT}[f] \rangle & = \sum_{i, j} \bT_{ij} \int_{[(i - 1)/n, i/n)} g(y) dy \cdot \int_{[(j - 1)/n, j/n)} f(x) dx \\ 
	& = \bv^\top_g \bT \bv_f \\
	& \le \Vert \bv_g \Vert_2 \cdot \Vert \bT \Vert_{\op} \cdot \Vert \bv_f \Vert_2.
\end{align*}
where $\bv_f$ and $\bv_g$ are vectors defined as
\begin{align*}
	(\bv_f)_i & = \int_{[(i - 1)/n, i/n)} f(x) dx, \\
	(\bv_g)_i & = \int_{[(i - 1)/n, i/n)} g(x) dx.
\end{align*}
Next we bound $\Vert \bv_f \Vert_2$, the norm $\Vert \bv_g \Vert$ can be bounded analogously. Define a Hilbert space $L_2^i := L_2 ([(i - 1)/n, i/n))$. Then
\begin{align*}
	|(\bv_f)_i| = |\langle 1, f \rangle_{L_2^i} | \le \Vert 1 \Vert_{L_2^i} \cdot \Vert f \Vert_{L_2^i}.
\end{align*}
Meanwhile, we have
\begin{align*}
	\sum_{i = 1}^n \Vert f \Vert_{L_2^i}^2 = \int_0^1 f^2(x) dx = 1.
\end{align*}
Since $\Vert 1 \Vert_{L_2^i} = \sqrt{\mu ([(i - 1)/n, i/n))} = 1/\sqrt{n}$, we obtain 
\begin{align*}
    \Vert \bv_f \Vert_2 \leq \sqrt{\sum_{i = 1}^n |(\bv_f)_i|^2} \leq \sqrt{ \frac{1}{n} \sum_{i = 1}^n \|f\|^2_{L_2^i}} \le \frac{1}{\sqrt{n}}
\end{align*}
Thus,
\begin{align*}
	\langle g, \bbW_{\bT}[f] \rangle \le \frac{1}{n} \Vert \bT \Vert_{\op}.
\end{align*}
The dual of $L_2[0, 1]$ coincides with $L_2[0, 1]$, so 
\begin{align*}
	\opnorm{\bbW_{\bT}} = \sup_{f, g, \Vert f \Vert_{L_2} = \Vert g \Vert_{L_2} = 1} 
	\langle g, \bbW_{\bT} [f] \rangle \le \frac{1}{n} \Vert \bT \Vert_{\op}.
\end{align*}
The upper bound can be attained by taking $f, g$ equal to
\begin{align*}
	f & = \sum_{i = 1}^n \bu_i I{[(i - 1)/n, i/n)}, \\
	g & = \sum_{i = 1}^n \bv_i I{[(i - 1)/n, i/n)},
\end{align*} 
where $\bu$ and $\bv$ are the right and left singular vectors, respectively, corresponding to the largest singular value of $\bT$.

\section{Proof of Corollary~\ref{corollary: convergene rate targeted interventions}}
\label{section: convergence rate targeted interventions}

Instead of the network game $G(\bA, \btheta)$, we consider the corresponding graphon game $G(W_{\bA}, \tilde{\theta})$, where $W_{\bA}$ is the empirical graphon of the network $\bA$. Define function $\hat{t}^\circ, \hat{t}$ as follows:
\begin{align*}
    \hat{t}(x) & = \sum_{i = 1}^N \hat{\btheta}_i \cdot \indicator{x \in [(i- 1)/N; i/N)}, \\
    \hat{t}^\circ(x) &= \sum_{i = 1}^N \hat{\btheta}'_i \cdot \indicator{x \in [(i- 1)/N; i/N)}.
\end{align*}
Then, we have
\begin{align*}
    T_{(\bA, \btheta)}(\hat{\btheta}) - T_{(\bA, \btheta)}(\hat{\btheta}') = T_{(W_{\bA}, \tilde{\theta})}(\hat{t}) - T_{(W_{\bA}, \tilde{\theta})}(\hat{t}^\circ),
\end{align*}
where $T_{(W_{\bA}, \tilde{\theta})}$ is the target function of the corresponding graphon game $G(W_{\bA}, \tilde{\theta})$ with budget $B$. Then, we rewrite the above as
\begin{align}
    T_{(W_{\bA}, \tilde{\theta})}(\hat{t}) - T_{(W_{\bA}, \tilde{\theta})}(\hat{t}^\circ) = \left (T_{(W_{\bA}, \tilde{\theta})}(\hat{t}) - T_{(W_{\bA}, \tilde{\theta})}(\hat{t}') \right ) + \left (T_{(W_{\bA}, \tilde{\theta})}(\hat{t}') - T_{(W_{\bA}, \tilde{\theta})}(\hat{t}^\circ) \right ), \label{eq: regret decomposition into discretezation and graphon error}
\end{align}
To bound the second term, we use
\begin{align}
\label{eq: discritization error - setup}
    T_{(W_{\bA}, \tilde{\theta})}(\hat{t}') - T_{(W_{\bA}, \tilde{\theta})}(\hat{t}^\circ) & = \frac{1}{2} \left \langle \cR^2\gamma, \bbW_{\bA})[\tilde{\theta} + \hat{t}'], \tilde{\theta} + \hat{t}' \right \rangle - \frac{1}{2} \left \langle \cR^2\gamma, \bbW_{\bA})[\tilde{\theta} + \hat{t}^\circ], \tilde{\theta} + \hat{t}^\circ \right \rangle \nonumber \\
    & =  \frac{1}{2}\left \langle \cR^2(\gamma, \bbW_{\bA})[\hat{t}' - \hat{t}^\circ], \tilde{\theta} + \hat{t}' \right \rangle + \frac{1}{2} \left \langle \cR^2\gamma, \bbW_{\bA})[\tilde{\theta} + \hat{t}^\circ], \hat{t}' - \hat{t}^\circ \right \rangle \nonumber \\
    & \le \frac{\Vert \hat{t}' - \hat{t}^\circ \Vert_{L_2} (\Vert \tilde{\theta} \Vert_{L_2} + \sqrt{B})}{(1 - \gamma \Vert \bbW_{\bA} \Vert_{\op})^2} = \frac{\Vert \hat{t}' - \hat{t}^\circ \Vert_{L_2} (\Vert \tilde{\theta} \Vert_{L_2} + \sqrt{B})}{(1 - \gamma \Vert \bA \Vert_{\op} / N)^2}.
\end{align}

Then, we bound $\Vert \hat{t}' - \hat{t}^\circ \Vert_{L_2}$ as follows:
\begin{align*}
    \Vert \hat{t}' - \hat{t}^\circ \Vert^2_{L_2} & = \sum_{i = 1}^N \int_{(i - 1)/N}^{i/N} \left (\hat{t}'(z) - N \int_{(i - 1)/N}^{i/N} \hat{t}'(x) dx\right )^2 dz.
\end{align*}
The optimizer $\hat{t}'$ belongs to $\operatorname{Im}{\bbW}_{\widehat{\bQ}'} + \operatorname{span}(\tilde{\theta})$. Let $g \in \operatorname{Im}{\bbW}_{\widehat{\bQ}'}$ be a function such that $\hat{t}' - g$ is the projection of $\hat{t}'$ on $\operatorname{span}(\tilde{\theta})$, so $\hat{t}' - g$ is constant on $((i - 1)/N; i/N)$ for $i = 1, \ldots, N$. It implies
\begin{align*}
    \Vert \hat{t}' - \hat{t}^\circ \Vert^2_{L_2} & = \sum_{i = 1}^N \int_{(i - 1)/N}^{i/N} \left (g(z) - N \int_{(i - 1)/N}^{i/N} g(x) dx\right )^2 dz.
\end{align*}

For $j = 0, \ldots, n - 1$, define $i_j \in [N]$ as such integer that $[(i_j - 1)/N; i_j/N)$ contains $j/n$. Note that since $N \ge n$, $i_j$ is defined correctly, i.e. there is no interval $[(i - 1)/N; i/N)$ that contains $j/n$ and $(j + 1)/n$ for some $j$. Since $g(x)$ is constant on intervals $(j/n; (j + 1)/n)$ for $j = 0, \ldots, n$, $g(x)$ is constant outside $S_{N} = \cup_{j = 0}^{n - 1} [(i_j - 1)/N; i_j/N]$, so
\begin{align*}
    \Vert \hat{t}' - \hat{t}^\circ \Vert^2_{L_2} & = \sum_{j = 0}^{n - 1} \int_{(i_j - 1)/N}^{i_j / N} \left (g(z) - N \int_{(i - 1)/N}^{i/N} g(x) dx \right )^2 dz \le  \sum_{j = 0}^{n - 1} \int_{(i_j - 1)/N}^{i_j / N}  (g)^2(x) dx \\
    & \le \sum_{j = 0}^{n - 1} \int_{(i_j - 1)/N}^{i_j / N}  (\hat{t}')^2(x) dx.
\end{align*}
Using again the fact that $\hat{t}'(\cdot)$ is constant on $(j/n; (j + 1)/n)$, we deduce that
\begin{align*}
   \Vert \hat{t}' - \hat{t}^\circ \Vert^2_{L_2} & \le \sum_{j = 0}^{n - 1} \int_{(i_j - 1)/N}^{i_j / N}  (\hat{t}')^2(x) dx  = \int_{S_N} (\hat{t}')^2(x) dx = \sum_{j = 0}^{n - 1} \int_{(j/n; (j + 1)/n) \cap S_N} (\hat{t}'(x))^2 dx \\
   & = \sum_{j = 0}^{n - 1} \mu \left ( (j/n; (j + 1)/n) \cap S_N\right )  \int_{j/n}^{(j + 1)/n} (\hat{t}'(x))^2 dx\\
    & \le \sum_{j = 0}^{n - 1} \frac{2}{N}  \int_{j/n}^{(j + 1)/n} (\hat{t}'(x))^2 dx = \frac{2 \Vert \hat{t}' \Vert_{L_2}^2}{N} = \frac{2 B}{N}.
\end{align*}
Substituting the above back to~\eqref{eq: discritization error - setup}, we obtain
\begin{align}
\label{eq: discritezation bound - final}
    T_{(W_{\bA}, \tilde{\theta})}(\hat{t}') - T_{(W_{\bA}, \tilde{\theta})}(\hat{t}^\circ) \le \sqrt{\frac{2B}{N}} \cdot \frac{(\Vert \tilde{\theta} \Vert_{L_2} + \sqrt{B})}{(1 - \gamma \Vert \bA \Vert_{\op} / N)^2} = \ttb.
\end{align}

To bound $T_{(W_{\bA}, \tilde{\theta})}(\hat{t}) - T_{(W_{\bA, \tilde{\theta}})}(\hat{t}')$, we use Theorem~\ref{theorem: graphon perturbed welfare problem}, that implies
\begin{align}
    & T_{(W_{\bA}, \tilde{\theta})}(\hat{t}) - T_{(W_{\bA, \tilde{\theta}})}(\hat{t}')  \le \frac{4 \gamma \Vert W_{\bA} - \rho_N/\rho_n W_{\widehat{\bQ}'} \Vert_{\op}}{(1 - \gamma [\Vert \bbW_{\bA} \Vert_{\op} \vee \Vert \rho_N / \rho_n \bbW_{\widehat{\bQ}'} \Vert_{\op}])^3} \nonumber \\
    & \qquad \qquad \le 4 \gamma  \cdot \frac{\Vert \bbW_{\bA} - \bbW_{\bQ} \Vert_{\op} + \Vert \bbW_{\bQ} - \rho_N \bbW \Vert_{\op} + \rho_N/ \rho_n \Vert \rho_n \bbW - \bbW_{\widehat{\bQ}'} \Vert_{\op}}{(1 - \gamma [\Vert \bbW_{\bA} \Vert_{\op} \vee \Vert \rho_N / \rho_n \bbW_{\widehat{\bQ}'} \Vert_{\op}])^3} \nonumber \\
    & \qquad \qquad = \tau \cdot \left ( \Vert \bbW_{\bA} - \bbW_{\bQ} \Vert_{\op} + \Vert \bbW_{\bQ} - \rho_N \bbW \Vert_{\op} + \rho_N/ \rho_n \Vert \rho_n \bbW - \bbW_{\widehat{\bQ}'} \Vert_{\op}\right ) \label{eq: graphon regret}
\end{align}
where $\bQ$ is the matrix of connection probabilities for the matrix $\bA$.

Denoting the diagonal of $\bQ$ by $\bD_{\bQ}$, we bound
\begin{align*}
    \Vert \bbW_{\bA} - \bbW_{\bQ} \Vert_{\op} & = \frac{\Vert \bA - \bQ \Vert_{\op}}{N} \le \frac{\Vert \bA - \bQ + \bD_{\bQ} \Vert_{\op}}{N} + \frac{\Vert \bD_{\bQ} \Vert_{\op}}{N} \\
    & \le 6 \sqrt{\frac{\rho_N}{N}} + \frac{\sqrt{\constantBandeira \log \frac{6N}{\delta}}}{N}+ \frac{\rho_N}{N} \le 8 \sqrt{\frac{\rho_N}{N}}
\end{align*}
with probability at least $1 - \delta/3$ due to Corollary~\ref{corollary: symmetric matrix concentration} and assumption $N \rho_N \ge \constantBandeira \log \frac{6 N}{\delta}$. Then, combining the above, \eqref{eq: discritezation bound - final} and~\eqref{eq: graphon regret} with~\eqref{eq: regret decomposition into discretezation and graphon error}, we obtain the following bound on $T_{(\bA, \btheta)}(\hat{\btheta}) - T_{(\bA, \btheta)}(\hat{\btheta}')$:
\begin{align}
    T_{(\bA, \btheta)}(\hat{\btheta}) - T_{(\bA, \btheta)}(\hat{\btheta}') & \le \ttb + 8 \tau \sqrt{\frac{\rho_N}{N}} \nonumber \\
    &\quad + \tau \cdot \left (  \Vert \bbW_{\bQ} - \rho_N \bbW \Vert_{\op} + \rho_N/ \rho_n \Vert \rho_n \bbW - \bbW_{\widehat{\bQ}'} \Vert_{\op} \right ). \label{eq: bound up to graphon type}
\end{align}

To bound $\Vert \bbW_{\bQ} - \rho_N \bbW \Vert_{\op}$ and $\Vert \rho_n \bbW - \bbW_{\widehat{\bQ}} \Vert_{\op}$, we consider three different cases.

\noindent \textbf{Case 1. The graphon $W$ is an SBM with $k$ communities.} By the lines of the proof of Theorem~\ref{theorem: SBM convergence rate}, we have
\begin{align*}
    \Vert \rho_N \bbW - \bbW_{\bQ} \Vert_{\op} \le \rho_N \left ( \frac{32k \sum_{a = 1}^{k - 1}e_a}{N} \log \frac{6k}{\delta} \right )^{1/4}
\end{align*}
with probability at least $1 - \delta/3$, see~\eqref{eq: error of connection probabilities graphon}. 

Finally, in this case, $\widehat{\bQ}' = (\widehat{\bQ}_{\lambda} + \widehat{\bQ}^T_{\lambda})/2$ for the hard-thresholding estimator $\widehat{\bQ}_{\lambda}$ of $\bQ'$. By conclusion of this theorem, we have
\begin{align*}
    \frac{\rho_N}{\rho_n} \Vert \rho_n \bbW - \widehat{\bbW} \Vert_{\op} \le \rho_N \left ( \frac{32k \sum_{a = 1}^{k - 1}e_a}{n} \log \left ( \frac{12k}{\delta} \right) \right )^{1/4} + 11 \sqrt{\frac{\rho_N}{n \rho_n}}
\end{align*}
and $\rank(\widehat{\bbW})  \le 2 \cdot \rank(\widehat{\bQ}_{\lambda}) \le 2 k$ with probability at least $1 - \delta/3$, provided $n \rho_n \ge \constantBandeira \log (12 n/\delta)$. Due to~\eqref{eq: bound up to graphon type}, it yields
\begin{align*}
    T_{(\bA, \btheta)}(\hat{\btheta}) - T_{(\bA, \btheta)}(\hat{\btheta}') & \le \ttb + \tau \cdot \left (8\sqrt{\frac{\rho_N}{N}}+ 8 \rho_N \left ( \frac{k \sum_{a = 1}^{k - 1}e_a}{n} \log \left ( \frac{12k}{\delta} \right) \right )^{1/4} +  11 \sqrt{\frac{\rho_N}{n \rho_n}}\right ) \\
    & \le \ttb + 24\tau \cdot \left (\sqrt{\frac{\rho_N}{n \rho_n}}+  \rho_N \left ( \frac{k \sum_{a = 1}^{k - 1}e_a}{n} \log \left ( \frac{12k}{\delta} \right) \right )^{1/4} \right )
\end{align*}
with probability at least $1 - \delta$,
where we used $n \rho_n \le n \le N$.

\noindent \textbf{Case 2. The graphon $W$ is $(H, \alpha)$-H\"older.} By the lines of the proof of Theorem~\ref{theorem: holder convergence rate}, we have
\begin{align*}
    \Vert \rho_N \bbW - \bbW_{\bQ} \Vert_{\op} \le  \rho_N C_1(H, \alpha) \left (  \frac{\log(12/\delta)}{N}\right )^{\beta/2},
\end{align*}
see~\eqref{eq: holder matrix of connection probabilities difference} with probability at least $1 - \delta/3$. We have $\widehat{\bQ}' = (\widehat{\bQ}^*_{\beta} + (\widehat{\bQ}^*_{\beta})^{T})/2$ for the estimator $\widehat{\bQ}^*_{\beta}$ applied to $\bA'$, so by the conclusion of this theorem, we have
\begin{align*}
    \Vert \rho_n \bbW - \widehat{\bbW} \Vert_{\op} \le \rho_n  C \cdot \left ( \frac{\log (24 /\delta)}{n} \right )^{\beta/2} + C \min \left \{\sqrt{\frac{\rho_n}{n}}, \; \left ( \frac{\rho_n^{\beta + 1}}{n^{2 \beta}} \right )^{1/(2 \beta + 1)} \right \},
\end{align*}
and
\begin{align*}
    \rank(\widehat{\bbW}) \le C' \cdot \begin{cases}
            \min \Big \{\frac{n^{\beta/(2 \alpha + 1)}}{\log^{\beta/(2 \alpha + 1)}(24/\delta)}, \; (n^2 \rho_n)^{2\beta (2 \beta + 1)^{-1} / (2\alpha + 1)} \Big \}, & \text{ if } n^{2 \beta - 1} > \rho_n, \\
            (n \rho_n)^{1/(2 \alpha + 1)}, & \text{ otherwise.}
        \end{cases}
\end{align*}
with probability at least $1 - \delta/3$, provided the conditions of Theorem~\ref{theorem: holder convergence rate} hold with $\delta/3$ in place of $\delta$.  Together with bound~\eqref{eq: bound up to graphon type}, the above implies
\begin{align*}
    & T_{(\bA, \btheta)}(\hat{\btheta}) - T_{(\bA, \btheta)}(\hat{\btheta}') \le \ttb  \\
    & \qquad \qquad + \tau \cdot \left ( 8 \sqrt{\frac{\rho_N}{N}} + C \rho_N \left ( \frac{\log (24 /\delta)}{n} \right )^{\beta/2} + C \min \left \{\sqrt{\frac{\rho_N}{n \rho_n}}, \; \frac{\rho_N}{\rho_n}\left ( \frac{\rho_n^{\beta + 1}}{n^{2 \beta}} \right )^{1/(2 \beta + 1)} \right \} \right )
\end{align*}

\noindent \textbf{Case 3. The graphon $W$ is $(M, r)$-analytic.} By the lines of the proof of Theorem~\ref{theorem: analityc convergence rate}, part i), the graphon $W$ is $(H, 1)$-H\"older for $H = \max \{4 M r^{-1}, 2 r^{-1}\}$, so, repeating the proof of Case 2, we conclude that, with probability at least $1 - \delta$, we have
\begin{align*}
     T_{(\bA, \btheta)}(\hat{\btheta}) - T_{(\bA, \btheta)}(\hat{\btheta}') \le \ttb  + \tau \cdot \left ( 8 \sqrt{\frac{\rho_N}{N}} + C \rho_N \left ( \frac{\log (24 /\delta)}{n} \right )^{1/2} + C \frac{\rho_N}{n^{2/3} \rho_n^{1/3}} \right ),
\end{align*}
and $\rank(\widehat{\bbW}) \le C_2(M, r) (\log (n \rho_n) + \log \log \frac{24}{\delta})$ by the conclusion of Theorem~\ref{theorem: analityc convergence rate}.

\section{Tools}\label{appendix: tools}

In this section, we review established results from the literature that are utilized in the proofs.

\subsection{Concentration inequalities}

First, we state a standard concentration inequality for sums of independent random variables.

\begin{prop}[Bernstein's inequality, see Equation (2.10) in \citep{boucheron_concentration_inequalities}]
    Let $X_1, \ldots, X_n$ be independent random variables such that $\max_i |X_i| \le M$. Then
    \begin{align*}
    	\bbP \left (\sum_{i = 1}^n X_i - \sum_{i = 1}^n \bbE (X_i) \ge  t
    	\right )
    	\le \exp \left (
    		- \frac{t^2/2}{\left(\sum_{i = 1}^n \Var (X_i)\right) + M t/3}
    	\right ), 
    \end{align*}
    and
    \begin{align*}
    	\bbP \left (
    		\left|\sum_{i = 1}^n X_i - \sum_{i = 1}^n \bbE (X_i) \right|\ge  t
    	\right )
    	\le 2\exp \left (
    		- \frac{t^2/2}{\left(\sum_{i = 1}^n \Var (X_i)\right) + M t/3}
    	\right ).
    \end{align*}
\end{prop}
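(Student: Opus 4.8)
The statement is the classical Bernstein inequality, and the plan is to reproduce the standard Cram\'er--Chernoff argument. Write $Y_i = X_i - \bbE X_i$, so that $\bbE Y_i = 0$ and $\Var(X_i) = \bbE Y_i^2 =: v_i$; put $v = \sum_{i=1}^n v_i$ and $S = \sum_{i=1}^n Y_i$. The hypothesis $|X_i|\le M$ will enter through the one-sided bound $(Y_i)_+ \le M$ (valid, e.g., for nonnegative $X_i$, which covers every application of this proposition in the paper; in full generality it reads $(Y_i)_+\le 2M$ and the constant $1/3$ below degrades to $2/3$). For any $\lambda\in(0,3/M)$, Markov's inequality applied to $e^{\lambda S}$ together with independence gives
\[
\bbP(S\ge t)\;\le\;e^{-\lambda t}\,\bbE e^{\lambda S}\;=\;e^{-\lambda t}\prod_{i=1}^n\bbE e^{\lambda Y_i}.
\]

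First I would establish the per-coordinate moment generating function bound
\[
\bbE e^{\lambda Y_i}\;\le\;\exp\!\Bigl(\tfrac{v_i\lambda^2/2}{1-M\lambda/3}\Bigr),\qquad 0<\lambda<3/M .
\]
This rests on the elementary scalar inequality $e^x-1-x\le \tfrac{x^2/2}{1-x/3}$ for $0\le x<3$ (and $e^x-1-x\le x^2/2$ for $x\le 0$), which follows by comparing $\sum_{k\ge 2}x^k/k!$ with $\tfrac{x^2}{2}\sum_{k\ge 0}(x/3)^k$ term by term using $k!\ge 2\cdot 3^{\,k-2}$. Applying it pointwise with $x=\lambda Y_i$ (legitimate since $\lambda (Y_i)_+<\lambda M<3$), taking expectations, using $\bbE Y_i=0$ to kill the linear term, and then $1+u\le e^u$, gives the stated bound. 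Multiplying over $i$ yields, for all $0<\lambda<3/M$,
\[
\bbP(S\ge t)\;\le\;\exp\!\Bigl(-\lambda t+\tfrac{v\lambda^2/2}{1-M\lambda/3}\Bigr).
\]

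It then remains to choose $\lambda$. Rather than optimising exactly, I would substitute the near-optimal value $\lambda=\dfrac{t}{v+Mt/3}\in(0,3/M)$, for which $1-M\lambda/3=\dfrac{v}{v+Mt/3}$; a one-line simplification then collapses the exponent to $-\dfrac{t^2/2}{v+Mt/3}=-\dfrac{t^2/2}{\sum_i\Var(X_i)+Mt/3}$, which is the first inequality. The two-sided inequality follows by applying the first bound to $-X_1,\dots,-X_n$ (which satisfy the same hypotheses and have the same variances) and taking a union bound, producing the factor $2$.

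The only non-routine ingredient is the sharp scalar inequality $e^x-1-x\le\tfrac{x^2/2}{1-x/3}$ — equivalently, Bernstein's moment condition $\bbE (Y_i)_+^k\le\tfrac{k!}{2}\,v_i\,(M/3)^{\,k-2}$ for $k\ge 3$ — since this is precisely what makes the boundedness enter as $M/3$ in the denominator; the Chernoff step, the factorisation over independent coordinates, the plug-in value of $\lambda$, and the two-sided extension are all routine bookkeeping.
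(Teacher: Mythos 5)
The paper never proves this proposition: it is stated as a known tool with a pointer to Boucheron--Lugosi--Massart, Eq.\ (2.10), so there is no internal argument to compare against, and your Cram\'er--Chernoff derivation is the standard textbook proof, carried out correctly. The scalar bound $e^{x}-1-x\le \frac{x^2/2}{1-x/3}$ for $0\le x<3$ (via $k!\ge 2\cdot 3^{k-2}$) together with $e^x-1-x\le x^2/2$ for $x\le 0$ gives $\bbE e^{\lambda Y_i}\le\exp\bigl(\frac{\lambda^2 \Var(X_i)/2}{1-\lambda M/3}\bigr)$ for $0<\lambda<3/M$ as soon as $(Y_i)_+\le M$, and your plug-in $\lambda=t/\bigl(v+Mt/3\bigr)$ does collapse the exponent exactly to $-\frac{t^2/2}{v+Mt/3}$. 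The caveat you flag is genuine, not pedantry: with only the uncentered bound $|X_i|\le M$ and the \emph{centered} variance $\Var(X_i)$ in the denominator, the constant $M/3$ is not attainable in general (a two-point example $X_i\in\{-M,M\}$ with $\bbP(X_i=M)=p<1/4$ violates the stated one-sided bound for large $n$, since the Cram\'er exponent $\KL{p+x}{p}$ drops below $\frac{x^2/2}{p(1-p)+x/6}$ for small $x$); the cited BLM version escapes this precisely because it uses $v=\sum\bbE X_i^2$ with the one-sided condition $X_i\le M$. So the proposition must be read with either $|X_i-\bbE X_i|\le M$ or, as you do, $(X_i-\bbE X_i)_+\le M$, and you are right that every use in the paper involves indicators $\indicator{\cdot}\in[0,1]$, for which this holds. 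One small gloss: your two-sided step applies the upper-tail bound to $-X_i$, whose positive centered part is $\bbE X_i-X_i$; under $0\le X_i\le M$ this is again at most $M$ (indeed $|X_i-\bbE X_i|\le M$ there), so the union bound is fully justified, but it is worth saying so explicitly since nonnegativity itself is not inherited by $-X_i$.
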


Then, we introduce a concentration inequality for the operator norm of random matrices with independent entries.

\begin{prop}[Matrix concentration inequality, see Corollary 3.12, Remark 3.13 in~\citep{bandeira2016}]
\label{proposition: matrix concentration}
	Let $\bX$ be an $n \times n$ matrix whose entries $\bX_{i j}$ are independent, centered and bounded random variables. Then, for any $0 < \epsilon \le 1/2$,  there exists a universal constant $c_\epsilon$ such that for every $t > 0$
	\begin{align*}
		\bbP \left ( \Vert \bX \Vert_{\op} \ge (1 + \epsilon) 2 \sqrt{2} \sigma + t \right ) \le n e^{-t^2 / c_\epsilon \sigma_*^2},
	\end{align*}
	where we have defined
	\begin{align*}
		\sigma : = \max_i \sqrt{\sum_j \bbE (\bX_{i j}^2}), \quad \sigma_* : = \max_{i j}  \Vert \bX_{i j} \Vert_{\infty}.
	\end{align*}
\end{prop}

The above proposition yields the following corollary for symmetric matrices.

\begin{corollary}
\label{corollary: symmetric matrix concentration}
    Let $\bX$ be a symmetric $n \times n$ matrix whose entries $\bX_{i j}, 1 \le i \le j \le n,$ are independent, centered and bounded random variables. Then, for any $0 < \epsilon \le 1/2$, there exists a universal constant $c_\epsilon$ such that for every $t > 0$
    \begin{align*}
        \bbP \left ( \Vert \bX \Vert_{\op} \ge (1 + \epsilon) 4 \sqrt{2} \sigma + t \right ) \le 2n e^{-t^2 / 4c_\epsilon \sigma_*^2},
    \end{align*}
    where we have defined
    \begin{align*}
        \sigma : = \max_i \sqrt{\sum_j \bbE (\bX_{i j}^2}), \quad \sigma_* : = \max_{i j}  \Vert \bX_{i j} \Vert_{\infty}.
    \end{align*}
\end{corollary}

\begin{proof}
    Set $\bX^+$ to be the upper triangular part of $\bX$, i.e., $\bX^+_{i j} = \bX_{i j}$ for $1 \le i < j \le n$ and $\bX^+_{i j} = 0$ otherwise. Set $\bX^- = \bX - \bX^+$. By Proposition~\ref{proposition: matrix concentration}, we have
    \begin{align*}
        \bbP \left ( \Vert \bX^+ \Vert_{\op} \ge (1 + \epsilon) 2 \sqrt{2} \sigma + t/2 \right ) \le n e^{-t^2 / 4 c_\epsilon \sigma_*^2}, \\
        \bbP \left ( \Vert \bX^- \Vert_{\op} \ge (1 + \epsilon) 2 \sqrt{2} \sigma + t/2 \right ) \le n e^{-t^2 / 4 c_\epsilon \sigma_*^2}.
    \end{align*}
    Then, by the triangle inequality, we have
    \begin{align*}
        \bbP \left ( \Vert \bX \Vert_{\op} \ge  (1 + \epsilon) 4 \sqrt{2} \sigma + t \right ) & \le \bbP \left ( \Vert \bX^+ \Vert_{\op} + \Vert \bX^- \Vert_{\op} \ge (1 + \epsilon) 4x \sqrt{2} \sigma + t \right ) \\
        & \le \bbP \left ( \Vert \bX^+ \Vert_{\op} \ge (1 + \epsilon) 2 \sqrt{2} \sigma + t/2 \right ) \\
        & \quad + \bbP \left ( \Vert \bX^- \Vert_{\op} \ge (1 + \epsilon) 2 \sqrt{2} \sigma + t/2 \right ) \\
        & \le 2 n e^{-t^2 / 4 c_\epsilon \sigma_*^2}.  \qedhere
    \end{align*}
\end{proof}

In our paper, we choose $\epsilon = \frac{3 \sqrt{2}}{4} - 1 \approx 0.06$. Then, $(1 + \epsilon) 4 \sqrt{2} = 6$. We denote the constant $c_\epsilon \vee 1$ by $\constantBandeira$. 

To derive concentration inequality for order statistics of uniform random variables, we use the following celebrated result of Massart, who extended earlier theorem of Dvoretzky, Kiefer and Wolfowitz~\citep{dvoretzky1956asymptotic}.

\begin{theorem}[\citep{massart1990tight}]
\label{theorem: massart DKW}
    Let $X_1, \ldots, X_n$ be i.i.d. random samples from a distribution with defined by CDF $F(\cdot)$. Consider the empirical CDF function $\widehat{F}(\cdot)$:
    \begin{align*}
        \widehat{F}(x) = \frac{1}{n} \sum_{i = 1}^n \indicator{X_i \le x}.
    \end{align*}
    Then, for any $t > 0$, we have
    \begin{align*}
        \bbP \left (\sqrt{n} \sup_{x \in \bbR} |F(x) - \widehat{F}(x) | \ge t \right ) \le 2 e^{-2t^2}.
    \end{align*}
\end{theorem}

Applying the above theorem to $\xi_1, \ldots, \xi_n$ uniformly distributed on $[0, 1]$ and noting that $F(\xi_{(i)}) = \xi_{(i)}$ in this case, we obtain the following corollary.

\begin{corollary}
\label{corollary: order statistics unifrom concentration}
Let $\xi_1, \ldots, \xi_n$ be i.i.d. random variables uniformly distributed on $[0, 1]$. Then, for any $t > 0$, we have
\begin{align*}
    \bbP \left ( \max_{i = 1, \ldots, n} |\xi_{(i)} - i/n|  \ge \frac{t}{\sqrt{n}} \right ) \le 2 e^{- 2 t^2}.
\end{align*}
\end{corollary}

\subsection{Norms and singular values of matrices}

\begin{prop}[Weyl's inequality, see \citep{bisgard2020}]
\label{proposition: weyl inequality}
	Let $\bA, \bB \in \bbR^{n \times n}$ be arbitrary matrices, and let $\sigma_k(\bA)$ and $\sigma_k(\bB)$, $k \in [n]$  be their singular values in descending order. Then
	\begin{align*}
		\sigma_{i + j - 1}(\bA + \bB) \le \sigma_{i}(\bA) + \sigma_j(\bB)
	\end{align*}
	for $1 \le i, j, i+ j - 1 \le  n$. If $\bA, \bB$ are symmetric, then the same inequality holds for their eigenvalues.
\end{prop}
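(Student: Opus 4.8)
The plan is to deduce Weyl's inequality for singular values from the Eckart--Young--Mirsky variational description of singular values, combined with subadditivity of rank and the triangle inequality for the operator norm. Recall that for any $\bM \in \bbR^{n\times n}$ and any $k \in [n]$,
\[
	\sigma_k(\bM) \;=\; \min_{\bC \,:\, \rank(\bC) \le k-1} \opnorm{\bM - \bC},
\]
the minimum being attained by the truncated singular value decomposition of $\bM$ that retains its $k-1$ largest singular triples.

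First I would fix indices $i,j$ with $1 \le i,j$ and $i+j-1 \le n$, and choose matrices $\bC_1,\bC_2 \in \bbR^{n\times n}$ attaining the two minima: $\rank(\bC_1) \le i-1$ with $\opnorm{\bA - \bC_1} = \sigma_i(\bA)$, and $\rank(\bC_2) \le j-1$ with $\opnorm{\bB - \bC_2} = \sigma_j(\bB)$. Then $\bC := \bC_1 + \bC_2$ satisfies $\rank(\bC) \le (i-1)+(j-1) = (i+j-1)-1$, so $\bC$ is admissible in the variational formula for $\sigma_{i+j-1}(\bA+\bB)$. Consequently,
\[
	\sigma_{i+j-1}(\bA+\bB) \;\le\; \opnorm{(\bA+\bB) - \bC} \;\le\; \opnorm{\bA - \bC_1} + \opnorm{\bB - \bC_2} \;=\; \sigma_i(\bA) + \sigma_j(\bB),
\]
which is the asserted bound.

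If one prefers a self-contained argument not invoking Eckart--Young, an equivalent route uses the Courant--Fischer min--max characterization $\sigma_k(\bM) = \min_{\dim V = n-k+1}\ \max_{x\in V,\ \|x\|_2 = 1}\|\bM x\|_2$: picking subspaces $V_1$ of dimension $n-i+1$ and $V_2$ of dimension $n-j+1$ realizing $\sigma_i(\bA)$ and $\sigma_j(\bB)$ respectively, the intersection $V_1\cap V_2$ has dimension at least $n-i-j+2 = n-(i+j-1)+1$; for every unit vector $x$ in (a subspace of) this intersection, $\|(\bA+\bB)x\|_2 \le \|\bA x\|_2 + \|\bB x\|_2 \le \sigma_i(\bA)+\sigma_j(\bB)$, and the min--max formula yields the claim. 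The only point needing a word of justification in either approach is the variational characterization of $\sigma_k$ itself (which one can in turn reduce to the classical Weyl inequality for eigenvalues via $\sigma_k(\bM) = \sqrt{\lambda_k(\bM^\top \bM)}$ and a Hermitian dilation argument); since this is entirely standard I would simply cite it, e.g.\ \cite{bisgard2020}. I do not anticipate any genuine obstacle here: the estimate is short once the correct characterization of singular values is in hand.
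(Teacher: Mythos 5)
Your proposal is correct. Note that the paper does not actually prove this proposition: it appears in the Tools appendix as a standard result quoted from the literature (with a citation to \cite{bisgard2020}), so there is no in-paper argument to compare against. Both of your derivations are valid and standard. The first one correctly combines the operator-norm Eckart--Young--Mirsky characterization $\sigma_k(\bM)=\min_{\rank(\bC)\le k-1}\opnorm{\bM-\bC}$ with subadditivity of rank and the triangle inequality; the second one uses the Courant--Fischer min--max formula, and your dimension count $\dim(V_1\cap V_2)\ge n-(i+j-1)+1$ is exactly what the hypothesis $i+j-1\le n$ guarantees to be positive. Either argument would serve as a self-contained substitute for the citation.
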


\begin{prop}[Eckart-Young-Mirsky theorem, see \citep{bisgard2020}]
\label{proposition: eckart-yang theorem}
	Let $\bA$ be an arbitrary matrix and $\sigma_1 \ge \ldots \ge \sigma_n$ be its singular values. Then, we have
	\begin{align*}
		\min_{\rank(\bB) \le r} \Vert \bA - \bB \Vert_{F} = \sqrt{\sum_{i > r} \sigma_i^2},
	\end{align*}
	and the minimizer is unique.
\end{prop}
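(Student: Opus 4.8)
The plan is to prove the stated identity as a pair of matching inequalities and then treat uniqueness separately. For the ``$\le$'' direction, write the singular value decomposition $\bA = \sum_{i=1}^n \sigma_i \bu_i \bv_i^\top$ and take $\bA_r = \sum_{i=1}^r \sigma_i \bu_i \bv_i^\top$; this matrix has rank at most $r$, and by orthonormality of the singular vectors $\Vert \bA - \bA_r \Vert_F^2 = \sum_{i>r}\sigma_i^2$, so $\min_{\rank(\bB)\le r}\Vert \bA - \bB\Vert_F \le \sqrt{\sum_{i>r}\sigma_i^2}$.

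For the ``$\ge$'' direction, I would invoke Weyl's inequality for singular values (Proposition~\ref{proposition: weyl inequality}), which is already available above. Fix any $\bB$ with $\rank(\bB)\le r$, so that $\sigma_{r+1}(\bB)=0$. Applying Weyl's inequality to $\bA = (\bA-\bB)+\bB$ with index $j=r+1$ gives, for every $i\ge 1$,
\[
\sigma_{i+r}(\bA) = \sigma_{i+(r+1)-1}\big((\bA-\bB)+\bB\big) \le \sigma_i(\bA-\bB) + \sigma_{r+1}(\bB) = \sigma_i(\bA-\bB).
\]
Summing squares and using $\Vert \bM\Vert_F^2 = \sum_k \sigma_k(\bM)^2$,
\[
\Vert \bA-\bB\Vert_F^2 = \sum_{i\ge 1}\sigma_i(\bA-\bB)^2 \ge \sum_{i\ge 1}\sigma_{i+r}(\bA)^2 = \sum_{i>r}\sigma_i^2 ,
\]
so every competitor obeys the reverse bound and the minimum equals $\sqrt{\sum_{i>r}\sigma_i^2}$.

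For uniqueness (which requires the implicit genericity assumption $\sigma_r>\sigma_{r+1}$, since otherwise rotating within a repeated singular block produces other minimizers), I would argue via first-order optimality on the smooth locus of the variety $\{\rank\le r\}$. One first shows that an optimal $\bB$ must have rank exactly $r$: if $\rank(\bB)\le r-1$ one can strictly decrease the residual by adding a suitable rank-one correction as long as $\sigma_r>0$, and the case $\sigma_r=0$ is trivial. Writing a full-rank factorization $\bB = XY^\top$, the tangent space of the rank variety at $\bB$ is $\{X M^\top + N Y^\top\}$, and stationarity of $\bB\mapsto\Vert \bA-\bB\Vert_F^2$ forces $(\bA-\bB)^\top X=0$ and $(\bA-\bB)Y=0$; hence $\mathrm{col}(\bB)$ and $\mathrm{row}(\bB)$ are invariant for $\bA$ and $\bA^\top$, $\bB$ is the compression of $\bA$ to these subspaces, and the equality case of the Weyl bound pins them down to the top-$r$ left/right singular subspaces, which are unique precisely when $\sigma_r>\sigma_{r+1}$. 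The main obstacle is exactly this last step: making the invariant-subspace/stationarity argument rigorous while carefully excluding the non-smooth points of the rank variety and handling singular-value multiplicities — everything up to uniqueness is short once Weyl's inequality is granted.
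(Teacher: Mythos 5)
The paper itself does not prove this proposition --- it is imported from the literature (\cite{bisgard2020}) as a tool --- so there is no internal argument to compare against; the question is only whether your proof stands on its own. For the displayed identity it does: the upper bound via the truncated SVD is immediate, and the lower bound via Weyl's inequality (Proposition~\ref{proposition: weyl inequality}) with $j=r+1$ and $\sigma_{r+1}(\bB)=0$, followed by summing squares, is a standard and fully rigorous route (indices $i+r>n$ cause no trouble since those singular values vanish). Note also that this is the only part of the proposition the paper ever uses: in the rank bounds of Theorems~\ref{theorem: holder convergence rate} and \ref{theorem: analityc convergence rate} only the inequality $\sum_{i>r}\sigma_i^2(\bA)\le \Vert \bA-\bB\Vert_F^2$ for a competitor $\bB$ of rank at most $r$ is invoked; uniqueness plays no role anywhere.

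The genuine gap is the final sentence of the proposition. You rightly observe that uniqueness of the minimizer requires the spectral gap $\sigma_r>\sigma_{r+1}$ (the statement as written is imprecise without it), but your argument for uniqueness is only a sketch: the first-order/stationarity step on the rank variety is not carried out, the non-smooth locus and repeated singular values are not actually handled, and you flag this yourself as the unresolved obstacle. So as submitted, the proposal establishes the value of the minimum completely but does not prove uniqueness. A standard way to close it without any manifold language is to track the equality cases in your own chain: equality in $\Vert\bA-\bB\Vert_F^2\ge\sum_{i>r}\sigma_i^2$ forces $\sigma_i(\bA-\bB)=\sigma_{i+r}(\bA)$ for all $i$ and $\sigma_i(\bA-\bB)=0$ for $i>n-r$, and from there one shows (using $\sigma_r>\sigma_{r+1}$) that the column and row spaces of $\bB$ must be the top-$r$ singular subspaces of $\bA$ and that $\bB$ is the corresponding compression, i.e.\ the truncated SVD; spelling this out is what your proposal still owes.
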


\subsection{Norms of matrices and graphons}
The cut norm of a matrix $\bA \in [0,1]^{n \times n}$ is defined by
\begin{align}
    \|\bA\|_\square := \frac{1}{n^2} \max_{S, T \subseteq [n]} \left|\sum_{i \in S, j \in T} A_{ij}\right|. 
\end{align} 
We have that  $\Vert \bA \Vert_{\op} \ge n \Vert \bA \Vert_{\square}.$
\begin{proof}
    Let $S^*, T^*$ be the sets that provide the maximum  in  the definition of $\|\bA\|_\square$, i.e., 
    \begin{align*}
        \|\bA\|_\square = \frac{1}{n^2} \left|\sum_{i \in S^*, j \in T^*} A_{ij}\right|.
    \end{align*}
    Define $\bu^{S^*} \in \bbR^n$ such that $\bu^{S^*}_i = 1$ if $i \in S^*$, $\bu^{S^*}_i = 0$ if $i \notin S^*$.  Similarly, define $\bu^{T^*}$. Then,
    \begin{align*}
        \|\bA\|_\square = \frac{1}{n^2} \langle \bA \bu^{T^*}, \bu^{S^*} \rangle \leq \frac{1}{n^2} \| \bA \bu^{T^*} \| \| \bu^{S^*} \| \leq \frac{1}{n^2} \| \bA \|_{\op} \| \bu^{T^*} \| \| \bu^{S^*} \| \leq \frac{1}{n^2} \| \bA \|_{\op} \sqrt{n} \sqrt{n} = \frac{\|\bA\|_{\op}}{n},  
    \end{align*}
    where the first inequality is the Cauchy-Schwartz inequality and the second inequality is due to submultiplicativity of the $2$-norm. 
\end{proof}


\subsection{Rank of kernels} \label{sec:rank_function}

Let $\bbW$ be the integral operator associated with the graphon $W$ defined as
$[\bbW f](x)= \int_0^1 W(x, y) f(y) dy$ for all $x \in [0,1]$, for all $f \in L_2[0,1]$. 
The operator $\bbW$ is a Hilbert-Schmidt operator. In particular, $\bbW$ is a compact operator, meaning it possesses a discrete spectrum. Hence, $\bbW$ admits a spectral decomposition.
\begin{equation*}
W(x, y) \sim \sum_{k} \lambda_{k} f_{k}(x) f_{k}(y) 
\end{equation*}
The rank of a kernel $W$ is determined by the rank of its associated kernel operator $\bbW$. While this rank is typically infinite, our focus lies on cases where it's finite. 
If the rank $r$ of $W$ is finite, then  the spectral decomposition  of $W$ will be finite. In particular, 
\begin{equation*}
W(x, y)=\sum_{k=1}^{r} \lambda_{k} f_{k}(x) f_{k}(y). 
\end{equation*}
holds almost everywhere. 
Note that every SBM graphon (Definition \ref{definition: SBM}) has a finite rank. Indeed, if $W$ is a stepfunction with $m$ steps, then every function in the range of $\bbW$ is a stepfunction with the same $m$ steps, and so the dimension of this range is at most $m$.
It is also easy to see that the sum and product of
two kernels with finite rank have a finite rank (see \citep[Sections 7.5, 14.4]{lovasz}).

\subsection{Minimax bounds}
\label{sec:minimax bounds}

Here, we briefly describe the simplest framework for establishing the minimax lower bounds on a risk. For deeper introduction, see~\citep{tsybakov_introduction_2009}. Suppose that we have some family of distributions $\P_{\theta}, \theta \in \Theta,$ over a sample space $\cX$ parametrized by parametric space $\Theta$. Let $\cR : \Theta \times \Theta \to \bbR_+$ be a semi-distance on $\Theta$. Any function \(\widehat{\theta}: \cX \to \Theta\) is referred to as an estimator of the parameter \(\theta\). Given a sample \(X\) drawn from the distribution \(\P_\theta\), the quantity \(\cR(\widehat{\theta}(X), \theta)\) serves as a natural measure of the estimator’s quality. A fundamental question in statistical estimation is determining the minimax lower bound on the risk \(\cR(\widehat{\theta}(X), \theta)\), representing the best achievable performance across all estimators.

If two distributions \(\P_{\theta_1}\) and \(\P_{\theta_2}\) are close in some sense, they generate similar samples \(X\) from the sample space \(\cX\). When \(X\) is low-dimensional, distinguishing between \(\theta_1\) and \(\theta_2\) becomes challenging, particularly if both parameters are high-dimensional, making it difficult for any estimator to separate them effectively. To formalize this intuition, we examine the Kullback-Leibler (KL) divergence between two discrete distributions \(\P_{\theta_1}\) and \(\P_{\theta_2}\), defined as follows:
\begin{align*}
    \KL{\P_{\theta_2}}{\P_{\theta_1}} = \sum_{x \in \cX} \P_{\theta_2}(X = x) \log \frac{\P_{\theta_2}(X = x)}{\P_{\theta_1}(X =x)}.
\end{align*}
It admits several basic properties:
\begin{enumerate}
    \item it is non-negative and equals zero if and only if $\P_{\theta_1}$ and $\P_{\theta_2}$ coincide;
    \item the KL-divergence does not exceed the chi-square divergence:
    \begin{align*}
        \KL{\P_{\theta_2}}{\P_{\theta_1}} \le \sum_{x \in \cX} \frac{\left (\P_{\theta_2}(X = x) -  \P_{\theta_1}(X = x) \right )^2}{\P_{\theta_1}(X = x)};
    \end{align*}
    \item for tensor product of measures, we have
    \begin{align*}
        \KL{\P_1 \otimes \P_1'}{\P_2 \otimes \P_2'} = \KL{\P_1}{\P_2} + \KL{\P_1'}{\P_2'}.
    \end{align*}
\end{enumerate}
If \(\KL{\P_{\theta_1}}{\P_{\theta_2}}\) is small, any estimator \(\widehat{\theta}\) is expected to struggle to distinguish between \(\theta_1\) and \(\theta_2\). More precisely, the following theorem establishes that if \(X \sim \P_{\theta}\) with \(\theta \in \{ \theta_1, \theta_2 \}\), then the risk \(\cR(\widehat{\theta}(X), \theta)\) is at least \(\cR(\theta_1, \theta_2) / 2\), with a probability that increases as \(\KL{\P_{\theta_1}}{\P_{\theta_2}} \to 0\).
\begin{theorem}
    \label{theorem:minimax bounds}
    Let $\theta_1, \theta_2$ be any two elements of parametric space $\Theta$ such that $\KL{\P_{\theta_1}}{\P_{\theta_2}} \le \alpha$. Then
    \begin{align*}
        \inf_{\widehat \theta} \max_{\theta \in \{\theta_1, \theta_2\}} \P_{\theta} \left\{\cR(\widehat{\theta}, \theta) \ge \cR(\theta_1, \theta_2) / 2 \right \} \ge \max \left \{ e^{-\alpha} / 4, \frac{1 - \sqrt{\alpha/2}}{2} \right \}.
    \end{align*}
\end{theorem}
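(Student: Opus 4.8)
The plan is the classical two-point (Le Cam) argument: reduce the estimation lower bound to a lower bound on the error of a binary test between $\P_{\theta_1}$ and $\P_{\theta_2}$, and then bound that testing error in two different ways (via Pinsker's inequality, and via a Hellinger/Jensen estimate), corresponding to the two terms in the maximum.

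\emph{Reduction to testing.} Given an arbitrary estimator $\widehat{\theta}:\cX\to\Theta$, I would define the test $\psi(X)\in\argmin_{j\in\{1,2\}}\cR(\widehat{\theta}(X),\theta_j)$, with ties broken arbitrarily. Using only symmetry and the triangle inequality of the semidistance $\cR$, one checks that $\{\psi\ne j\}\subseteq\{\cR(\widehat{\theta},\theta_j)\ge\cR(\theta_1,\theta_2)/2\}$: if $\psi(X)=j'\ne j$ then $\cR(\widehat{\theta},\theta_{j'})\le\cR(\widehat{\theta},\theta_j)$, hence $\cR(\theta_1,\theta_2)\le\cR(\theta_j,\widehat{\theta})+\cR(\widehat{\theta},\theta_{j'})\le 2\cR(\widehat{\theta},\theta_j)$. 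Writing $\rho:=\cR(\theta_1,\theta_2)$, this gives
\begin{align*}
\max_{j\in\{1,2\}}\P_{\theta_j}\!\left(\cR(\widehat{\theta},\theta_j)\ge\rho/2\right)\ge\max_{j}\P_{\theta_j}(\psi\ne j)\ge\tfrac12\Big(\P_{\theta_1}(\psi\ne1)+\P_{\theta_2}(\psi\ne2)\Big)\ge\tfrac12\,p^{\ast},
\end{align*}
where $p^{\ast}:=\inf_{\psi}\big[\P_{\theta_1}(\psi\ne1)+\P_{\theta_2}(\psi\ne2)\big]$ is the minimal total testing error. After taking the infimum over $\widehat{\theta}$, it remains to prove $p^{\ast}\ge\max\{\tfrac12 e^{-\alpha},\,1-\sqrt{\alpha/2}\}$.

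\emph{Bounding $p^{\ast}$.} The likelihood-ratio test is optimal, so $p^{\ast}=\sum_{x\in\cX}\min\{\P_{\theta_1}(x),\P_{\theta_2}(x)\}=1-\mathrm{TV}(\P_{\theta_1},\P_{\theta_2})$. For the first bound, Pinsker's inequality yields $\mathrm{TV}(\P_{\theta_1},\P_{\theta_2})\le\sqrt{\KL{\P_{\theta_1}}{\P_{\theta_2}}/2}\le\sqrt{\alpha/2}$, hence $p^{\ast}\ge 1-\sqrt{\alpha/2}$ and the factor $\tfrac12(1-\sqrt{\alpha/2})$ follows. For the second, write $p_i(x):=\P_{\theta_i}(x)$ and use Cauchy--Schwarz together with $\min(p_1,p_2)\cdot\max(p_1,p_2)=p_1p_2$ and $\sum_x\max(p_1,p_2)=2-p^{\ast}\le 2$ to get $\big(\sum_x\sqrt{p_1(x)p_2(x)}\big)^2\le 2\,p^{\ast}$; and Jensen's inequality for the convex map $t\mapsto e^{-t}$ under $\P_{\theta_1}$ to get
\begin{align*}
\sum_x\sqrt{p_1(x)p_2(x)}=\bbE_{\P_{\theta_1}}\!\big[e^{-\frac12\log(p_1/p_2)}\big]\ge e^{-\frac12\bbE_{\P_{\theta_1}}\log(p_1/p_2)}=e^{-\frac12\KL{\P_{\theta_1}}{\P_{\theta_2}}}\ge e^{-\alpha/2}.
\end{align*}
Combining the two displays gives $p^{\ast}\ge\tfrac12 e^{-\alpha}$, hence the factor $\tfrac14 e^{-\alpha}$, and taking the maximum of the two bounds finishes the proof.

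I do not expect a genuine obstacle here; the only places needing care are the reduction step (checking the triangle-inequality argument goes through when $\cR$ is merely a semidistance and ties in the $\argmin$ are broken arbitrarily) and the direction of the KL divergence in the Jensen step. The latter is harmless because the Jensen estimate can be carried out under either $\P_{\theta_1}$ or $\P_{\theta_2}$, so one is free to use precisely the direction $\KL{\P_{\theta_1}}{\P_{\theta_2}}$ that the hypothesis controls.
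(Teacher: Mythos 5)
Your proof is correct and is precisely the standard two-point (Le Cam/Tsybakov) argument: the paper does not prove this theorem itself but cites Chapter 2 of Tsybakov, where the argument is exactly your reduction-to-testing step combined with the bound $p^*\ge \tfrac12 e^{-\mathrm{KL}}$ (Tsybakov's Lemma 2.6, via the same Cauchy--Schwarz/Jensen computation) and the Pinsker bound $p^*\ge 1-\sqrt{\alpha/2}$. Both the direction of the KL divergence and the tie-breaking in the $\argmin$ are handled correctly, so there is nothing to fix.
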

For proofs, see Chapter~2 from~\citep{tsybakov_introduction_2009}.
For general distributions such that $\bbP_1 \ll \bbP_0$, the KL-divergence can be computed as follows:
\begin{align*}
    \KL{\bbP_1}{\bbP_0} = \int \log \left ( \frac{d \bbP_1}{d \bbP_0} \right ) d \bbP_1 = \int \log \left ( \frac{d \bbP_1}{d \bbP_0} \right ) \frac{d \bbP_1}{d \bbP_0} d \bbP_0.
\end{align*}
It can be useful to upper bound the KL-divergence by the chi-square divergence, that is defined as follows:
\begin{align*}
    \chi^2(\bbP_1, \bbP_0) = \int \left ( \frac{d \bbP_1}{d \bbP_0} - 1 \right )^2 d \bbP_0 = \int \left ( \frac{d \bbP_1}{d \bbP_0} \right )^2 d \bbP_0 - 1,
\end{align*}
which may be easier to compute in some cases. In particular, the following theorem provides a lower bound on the risk of any estimator $\widehat{\theta}$ based on the chi-square divergence between $\bbP_{\theta_j}$ and $\bbP_{\theta_0}$ for $j = 1, \ldots, M$.

\begin{theorem}[\citep{tsybakov_introduction_2009},Theorem 2.6]
\label{theorem: chi-square lower bound} Assume that $M \ge 2$ and suppose that the parameter space $\Theta$ contains elements $\theta_0, \theta_1, \ldots, \theta_M$ defining distributions $\bbP_0, \ldots, \bbP_M$ such that
\begin{enumerate}
    \item $d(\theta_i, \theta_j) \ge 2 s$ for all $0 \le i < j \le M$,
    \item $\bbP_j \ll \bbP_0$ for any $j = 1, \ldots, M$ and
    \begin{align*}
        \frac{1}{M} \sum_{i = 1}^M \chi^2(\bbP_j, \bbP_0) \le \alpha M
    \end{align*}
    with $\alpha \le 1/2$. 
\end{enumerate}
Then
\begin{align*}
    \inf_{\widehat{\theta}} \sup_{\theta \in \Theta} \bbP_{\theta}(d(\widehat{\theta}, \theta) \ge s) \ge \frac{1}{2} \left (1 - \alpha - \frac{1}{M} \right ).
\end{align*}
\end{theorem}

Theorems~\ref{theorem:minimax bounds} and~\ref{theorem: chi-square lower bound} are based on Fano's lemma which variant we also require in our proofs. 

\begin{theorem}[\citep{tsybakov_introduction_2009}, Theorem 2.2]
\label{theorem: fano's lemma}
Let $\bbP_1$, $\bbP_0$ be distributions over a sample space $\cX$. Suppose that $\KL{\bbP_1}{\bbP_0} \le \alpha$. Let $\bj^*$ be a random variable uniformly distributed over $\{0, 1\}$. Consider a random variable $X$ generated by first sampling $\bj^*$ and then sampling $X$ from $\bbP_{\bj^*}$. Then, for any estimator $\widehat{j} : \cX \to \{0, 1\}$ of $\bj^*$, we have
\begin{align*}
    \P\left (\widehat{j}(X) \neq \bj^* \right ) \ge \max \left \{ e^{-\alpha}/4, \frac{1 - \sqrt{\alpha}}{2} \right \}.
\end{align*}
\end{theorem}

\subsection{Data-processing inequality}

\begin{lemma}[Data-processing for KL under a Markov kernel]
	\label{lem:dp_kl_markov_kernel}
	Let $(\mathcal U,\mathcal F)$ and $(\mathcal A,\mathcal G)$ be measurable spaces and
	let $K(\cdot\mid u)$ be a Markov kernel from $\mathcal U$ to $\mathcal A$.
	For $i\in\{0,1\}$ let $\mu_i$ be probability measures on $\mathcal U$ and define the induced
	marginal laws on $\mathcal A$ by
	\[
	P_i(B) := \int_{\mathcal U} K(B\mid u)\,\mu_i(du),
	\qquad B\in\mathcal G,
	\]
	i.e.\ $P_i = \mu_i K$.
	Assume $\mu_1\ll \mu_0$ so that $\mathrm{KL}(\mu_1\|\mu_0)<\infty$.
	Then
	\[
	\mathrm{KL}(P_1\|P_0)\ \le\ \mathrm{KL}(\mu_1\|\mu_0).
	\]
\end{lemma}

\begin{proof}
	Define joint probability measures on $\mathcal U\times \mathcal A$ by
	\[
	\widetilde P_i(du,da) := \mu_i(du)\,K(da\mid u),
	\qquad i\in\{0,1\}.
	\]
	By construction, the $\mathcal A$-marginal of $\widetilde P_i$ is $P_i$.
	
	\smallskip
	\noindent\textbf{(1) KL increases under adding variables (equivalently, decreases under marginalization).}
	Using the chain rule for KL divergence (factorizing by $A$),
	\[
	\mathrm{KL}(\widetilde P_1\|\widetilde P_0)
	=
	\mathrm{KL}(P_1\|P_0)
	\;+\;
	\mathbb E_{A\sim P_1}\!\left[
	\mathrm{KL}\big(\widetilde P_1(U\mid A)\ \|\ \widetilde P_0(U\mid A)\big)
	\right].
	\]
	The conditional KL term is nonnegative, hence
	\[
	\mathrm{KL}(P_1\|P_0)\ \le\ \mathrm{KL}(\widetilde P_1\|\widetilde P_0).
	\]
	
	\smallskip
	\noindent\textbf{(2) Compute the joint KL: the kernel cancels.}
	Since $\widetilde P_i(du,da)=\mu_i(du)K(da\mid u)$ share the same conditional law $K(\cdot\mid u)$,
	\[
	\frac{d\widetilde P_1}{d\widetilde P_0}(u,a)=\frac{d\mu_1}{d\mu_0}(u)
	\quad\text{for }\widetilde P_1\text{-a.e. }(u,a),
	\]
	and thus
	\[
	\mathrm{KL}(\widetilde P_1\|\widetilde P_0)
	=
	\int_{\mathcal U\times\mathcal A}
	\log\!\left(\frac{d\mu_1}{d\mu_0}(u)\right)\,\widetilde P_1(du,da)
	=
	\int_{\mathcal U}
	\log\!\left(\frac{d\mu_1}{d\mu_0}(u)\right)\,\mu_1(du)
	=
	\mathrm{KL}(\mu_1\|\mu_0).
	\]
	
	\smallskip
	Combining (1) and (2) gives $\mathrm{KL}(P_1\|P_0)\le \mathrm{KL}(\mu_1\|\mu_0)$.
\end{proof}

\begin{lemma}[KL contraction under a measurable map]
	Let $S:\mathcal X\to\mathcal Y$ be measurable and let $P,Q$ be probability measures on $\mathcal X$.
	Then
	\[
	\mathrm{KL}(P\circ S^{-1}\,\|\,Q\circ S^{-1}) \le \mathrm{KL}(P\|Q).
	\]
\end{lemma}

\begin{proof}
	This is the data-processing inequality for KL.
	Define the Markov kernel $K(B\mid x):=\mathbf 1\{S(x)\in B\}$ (deterministic channel).
	Then $P\circ S^{-1}=PK$ and $Q\circ S^{-1}=QK$, hence
	$\mathrm{KL}(P\circ S^{-1}\|Q\circ S^{-1})=\mathrm{KL}(PK\|QK)\le \mathrm{KL}(P\|Q)$.
\end{proof}

\end{document}